\newcommand{\dint}[1]{\;\mathrm{d} #1 }
\newcommand{\R}{\mathbb{R}}
\newcommand{\N}{\mathbb{N}}
\newcommand{\Lp}[2]{\mathrm{L}^{ #1 }\left( #2 \right)} 
\newcommand{\Lpw}[3]{\mathrm{L}_{#2}^{ #1 }\left( #3 \right)}
\newcommand{\C}[2]{\mathcal{C}^{#1}\left( #2 \right) }
\newcommand{\inner}[3][]{ \left< #2, #3\right>_{ #1 } }
\newcommand{\einner}[1]{ \left< \cdot, \cdot \right>_{ #1 } }
\theoremstyle{plain}
\newtheorem{remark}[theorem]{Remark}
\newtheorem{assumption}[theorem]{Assumption}
\DeclareMathOperator*{\divergence}{div}
\DeclareMathOperator*{\esssup}{ess\,sup}
\DeclareMathOperator*{\essinf}{ess\,inf}
\DeclareMathOperator*{\tr}{tr}
\DeclareMathOperator*{\Real}{Re}
\DeclareMathOperator*{\range}{range}
\mathchardef\ordinarycolon\mathcode`\:
\title{On the approximability of Koopman-based operator Lyapunov equations\thanks{Submitted to the editors February 10, 2023.
}}
\author{Tobias Breiten\thanks{Institute of Mathematics, Technische Universit\"at Berlin, Stra\ss e des 17. Juni 136, 10623 Berlin, Germany
  (\email{tobias.breiten@tu-berlin.de, bernhard.hoeveler@tu-berlin.de}).}
\and Bernhard Höveler\footnotemark[2] 
}
\begin{document}

	\maketitle
	\begin{abstract}
	Lyapunov functions play a vital role in the context of control theory for nonlinear dynamical systems. Besides its classical use for stability analysis, Lyapunov functions also arise in iterative schemes for computing optimal feedback laws such as the well-known policy iteration. In this manuscript, the focus is on the Lyapunov function of a nonlinear autonomous finite-dimensional dynamical system which will be rewritten as an infinite-dimensional linear system using the Koopman or composition operator. Since this infinite-dimensional system has the structure of a weak-* continuous semigroup, in a specially weighted $\mathrm{L}^p$-space one can establish a connection between the solution of an operator Lyapunov equation and the desired Lyapunov function. It will be shown that the solution to this operator equation attains a rapid eigenvalue decay which justifies finite rank approximations with numerical methods. The potential benefit for numerical computations will be demonstrated with two short examples.
	\end{abstract}
	
	\begin{keywords}
	Lyapunov equations, Koopman operator, infinite dimensional systems, semigroups
	\end{keywords}

	\begin{MSCcodes}
	37L45, 47B33, 47D06, 93D05
	\end{MSCcodes}
	
	\section{Introduction}
	\label{sec:intro}
	
	We consider nonlinear dynamical systems of the form
	\begin{equation}\label{eq:dyn_sys}	
	\left\{ \begin{array}{rclcc} 
		\frac{\mathrm{d}}{\mathrm{d}t} x(t) &=& f(x(t)), & \qquad & \text{for} \; t \in (0,\infty),\\
		x(0) &=& z, & &
	\end{array} \right.
	\end{equation}
	where $z \in \mathbb R^{d}$ and $f\in \C{1}{\R^{d}; \R^{d}}$. If for given $z\in \mathbb R^{d}$ there exists a unique solution $x(\cdot)$ to \eqref{eq:dyn_sys}, we use the notation $x(t):=\Phi^t(z)$. Our interest is the computation of the cost functional	  
	\begin{align}\label{eq:cost_func}   
	  v(z) := \int_0^\infty g(\Phi^t(z)) \dint t
	\end{align}
	for some given $g\colon \mathbb R^{d}\to \mathbb R$. In what follows, we will restrict ourselves to initial values $z \in \overline{\Omega} \subset \mathbb R^{d}$, where $\Omega$ is a bounded open domain with $\mathcal{C}^1$ boundary. In particular, we assume that $\Omega$ is \emph{flow-invariant} under the system \eqref{eq:dyn_sys}, i.e., for every $z \in \overline{\Omega}$ it holds that $\Phi^t(z)\in \overline{\Omega}$ for all $t\ge 0$. It is well-known, see, e.g., \cite[Chapter III, Theorem XVI]{Wal98} that this is guaranteed if $f$ is continuous on $\overline{\Omega}$ and satisfies a tangent condition of the form
	\begin{align}\label{eq:tan_con}   
	 \nu(x)^\top f(x) \le 0 \ \ \text{for all } x\in \partial \Omega,
	\end{align}
	where $\nu(x)$ denotes the outer unit normal to the boundary $\partial \Omega$. Let us moreover emphasize that this implies that the solution $\Phi^t(z)$ exists for all $t\ge 0$ so that the cost functional \eqref{eq:cost_func} is well-defined as a mapping from $\overline{\Omega}$ to $[0,\infty]$. If $g(z)=\|h(z)\|^2$ and \eqref{eq:dyn_sys} is locally asymptotically stable around the origin, $v$ is characterized by the first order nonlinear partial differential equation (PDE)
	\begin{align}\label{eq:lyap_pde}   
	 \nabla v(z)^\top f(z) + \|h(z)\|^2=0, \ \ v(0)=0,
	\end{align}
	where $\nabla v(z)=(\tfrac{\partial v}{\partial z_1},\dots, \tfrac{\partial v}{\partial z_{d}})^\top$, see, e.g., \cite[Theorem 3.2]{Sch93}. If, additionally, the system is linear and the costs are quadratic, i.e., $f(z)=Az$, $g(z)=z^\top C^\top Cz$ then $v(z)=z^\top Pz$ with $P$ being the unique symmetric positive semidefinite solution to the observability Lyapunov equation
	\begin{align}\label{eq:lyap_fin_dim}   
	 A^\top P + P A + C^\top C=0.
	\end{align}
	In fact, similar results also hold true for the case of infinite-dimensional linear systems, see \cite[Theorem 4.1.23]{CurZ95} and one of the main ideas of this article is to use the known Koopman embedding which replaces \eqref{eq:dyn_sys} by an infinite-dimensional linear system such that \eqref{eq:lyap_pde} can be related to an operator Lyapunov equation similar to \eqref{eq:lyap_fin_dim}.\\
	
	\paragraph{Existing literature and related results} 
	 
	Computing Lyapunov functions for linear systems has been studied extensively in the literature, see the detailed overviews \cite{BenS13,Sim16} and the references therein. In particular, the so-called large-scale case where the system dynamics $f(x)=Ax$ are associated with a high dimensional system resulting from a spatial semi discretization of a PDE has received much attention. The efficacy of numerical methods here relies on the nowadays well-known fact that the solution $P$ to \eqref{eq:lyap_fin_dim} often exhibits a very fast singular value decay which can be exploited with low rank techniques \cite{BenLP08,OpmRW13,Sim07}. For some early works that discuss such properties from a finite-dimensional perspective, we refer to \cite{AntSZ02,Gra04,Pen00}. Beginning with \cite{CurS01}, considerable progress such as nuclearity of the solution operator $P$ or $p$-summability of the singular values has also been made from an infinite-dimensional perspective, see \cite{GruK14,Opm10,Opm15}. Most of the previous results rely on (spectral) properties of the generator $A$ of the underlying system and therefore restrict to a particular class of systems such as analytic control systems. Very recently, in \cite{Opm20} the author has obtained an approximation result for solutions to operator Lyapunov equation which is not based on analytic semigroup theory and therefore covers the case of hyperbolic PDEs. One of the main ideas in that article is to compensate for the lack of regularity of the solution by means of a particularly regularizing observation operator which, in the context of \eqref{eq:cost_func}, can be interpreted as a specific cost function $g$. We will follow a similar strategy which we elaborate upon later in this article. The general idea of embedding nonlinear dynamics in an infinite-dimensional system has a longstanding tradition with its origin tracing back to at least \cite{Koo31} and \cite{Car32}. A renewed interest, specifically with regard to applications in control theory, goes back to \cite{Mez05} and has inspired a great amount of work in the recent literature. While a full overview on aspects of Koopman and composition operators is beyond the scope of this article, we refer to the overview articles \cite{Bruetal22,BudMM12,Kluetal20} and the monograph \cite{MauSM20} as well as the references therein. Let us further mention \cite{MauM16} where the authors discuss nonlinear stability analysis by inspection of the eigenfunctions of the infinitesimal generator of the Koopman semigroup. 
	
	\paragraph{Contribution} Following the aforementioned embedding of \eqref{eq:dyn_sys} into an infinite-dimensional linear system, in this article we will discuss a functional analytic setting which allows to express \eqref{eq:cost_func} implicitly via a solution of an abstract operator Lyapunov equation on an appropriately weighted $\mathrm{L}^p$ space. Our main results can be summarized as follows:
	\begin{itemize}   
	 \item[(i)] Since the Koopman semigroup is not strictly contractive, we utilize a weight function as in \cref{assumption:comp_weight} to show that the composition semigroup becomes exponentially stable on the associated weighted $\mathrm{L}^p$ space, see \cref{thm:exp_stab}.
	 \item[(ii)] Following the linear quadratic case, we define a candidate $P$ to replace \eqref{eq:cost_func} by the abstract bilinear form \eqref{defi:value_bilinear_form} which we show in \cref{thm:spectral_decay} to be approximable by  convergent finite rank operators. Here, the approximation rate will depend on the structure and smoothness of the cost function $g$.
	 \item[(iii)] For the sum of squares solution (see \cref{defi:sum_of_squares}) induced by the eigenfunctions of the operator $P$, we show in \cref{theorem:sum_of_squares_is_normal} that it coincides with \eqref{eq:cost_func} by means of a Dirac sequence.
	 \item[(iv)] The operator $P$ is shown to satisfy an operator Lyapunov equation in \cref{thm:operator_lyap}. 
	\end{itemize}
 
	The precise structure of this article is as follows. After a brief review of well-known results on Koopman or composition operators and weighted $\mathrm{L}^p$ spaces, in \cref{sec:comp_sg} we replace the nonlinear dynamics \eqref{eq:dyn_sys} by an infinite-dimensional exponentially stable weak-* continuous semigroup with infinitesimal generator $A$. \Cref{sec:cost_operator_and_sum_of_squares} introduces a specific structure of the cost function $g$ which allows for an interpretation of the extended observability map arising in the context of well-posed linear systems. \Cref{sec:op_lyap} contains the characterization of $P$ as the solution to an operator Lyapunov equation. In \cref{sec:numerics}, we illustrate our numerical findings by means of two numerical examples. A short conclusion with an outlook for future research is provided in \cref{sec:conclusion}.\\
	
	\textbf{Notation.}
	For a Banach space $X$, we denote its topological dual space by $X^\ast$ and by $\einner{X,X^\ast}$ the dual pairing between $X$ and $X^\ast$. In the case of a Hilbert space $H$ we simply write $\einner{H}$ for the dual pairing. The space of bounded linear operators mapping from $X$ to itself is denoted by $\mathcal{L}(X)$. For a linear (unbounded) operator $A$ with domain $\mathcal{D}(A)$ in $X$ mapping to $Y$ we write $A \colon \mathcal{D}(A) \subseteq X \to Y$. If $\mathcal{D}(A)$ is dense in $X$, the adjoint  of such an operator is denoted by $A^\ast\colon \mathcal{D}(A^*)\subseteq Y^*\to X^*$. With $\sigma( A ) \subseteq \mathbb{C}$ we denote the spectrum of an operator. For a set $\Omega \subseteq \R^{d}$ we denote the closure by $\bar{\Omega}$ and the interior by $\Omega^\circ$. By $\C{m}{\Omega}$ we denote the set of $m$-times continuously differentiable functions over $\Omega$. The Lebesgue space to an index $1 \le p \le \infty$ and a Banach space $Y$ is denoted by $\Lp{p}{\Omega; Y}$. If $Y = \R$ we write $\Lp{p}{\Omega}$. The Sobolev space  to an index $k \in \N$ and $1 \le p \le \infty$ over a set $\Omega \subseteq \R^{d}$ is denoted by $W^{k,p}(\Omega)$. The Jacobi matrix containing all first order derivaties is denoted by $\mathrm{D}$. For a matrix $A \in \R^{d\times d}$ we denote the eigenvalues by $\lambda_i(A)$. If the matrix $A$ is symmetric, i.e., $A=A^\top$, we write $\lambda_{\text{min}}(A)$ and $\lambda_{\text{max}}(A)$ to denote the smallest and largest eigenvalue, respectively.
	
	\section{The composition semigroup and its adjoint}
	\label{sec:comp_sg}
	
	In this section, we first recall some well-known facts about composition operators on (weighted) Lebesgue spaces. In particular, we review existing results on the Koopman operator and its left-adjoint, the transfer or Perron-Frobenius operator. With the intention of relating the Lyapunov function \eqref{eq:cost_func} to a specific operator Lyapunov equation, we introduce an appropriately weighted $\Lpw{p}{w}{\Omega}$-space on which the composition semigroup associated with the flow $\Phi^t(z)$ of \eqref{eq:dyn_sys} will turn out to be exponentially stable.
		
	\subsection{Koopman and Perron-Frobenius operators}
	
	Instead of the nonlinear finite-dimensional system \eqref{eq:dyn_sys} which describes pointwise dynamics, one might focus on an infinite-dimensional linear formulation induced by a so-called \emph{composition operator}. The following results are well-known in the literature and can be found in many textbooks such as, e.g., \cite{LasM94}. 

	The Koopman operator can be seen as acting on observables evaluated along the flow $\Phi^t(z)$ of the system \eqref{eq:dyn_sys}. For a function space $Y$ and observables $Y^\ast \ni \psi\colon \Omega \to \mathbb R$, the Koopman operator $\mathcal{K}^t\colon Y^\ast \to Y^\ast$ associated with \eqref{eq:dyn_sys} is defined for fixed $t$ by
	\begin{align*}   
	 \psi\mapsto \mathcal{K}^t\psi = \psi\, \circ\, \Phi^t, \ \ (\mathcal{K}^t\psi)(z) =\psi(\Phi^t(z)).
	\end{align*}
	Depending on the nature of the dynamical system and the chosen function space $Y$, the family $\{\mathcal{K}^t\}_{t\ge 0}$ of Koopman operators enjoys additional properties. In fact, since the composition of functions is linear and \eqref{eq:dyn_sys} is time-invariant, for $Y=\Lp{1}{\Omega}$ and $Y^\ast = \Lp{\infty}{\Omega}$ respectively, the Koopman operator $\mathcal{K}^t$ defines a semigroup of bounded linear operators on $Y^\ast$, i.e., we have 
	\begin{itemize}   
	 \item[(i)] $\mathcal{K}^t\in\mathcal{L}(Y^\ast),\ t\ge 0,$
	 \item[(ii)] $\mathcal{K}^0=\mathrm{id}_{Y^\ast},$
	 \item[(iii)] $\mathcal{K}^{t+s}=\mathcal{K}^t\mathcal{K}^s,t,s\ge 0.$
	\end{itemize}
	Note however that on $\Lp{\infty}{\Omega}$, the semigroup is generally not strongly continuous \cite[Theorem 7.4.2]{LasM94} but only weak-* continuous, see also the discussion in \cref{subsec:comp_lpw}.
	
	If one is interested in the statistical behavior, it is useful to study how probability densities $\rho$ evolve under the dynamics \eqref{eq:dyn_sys}. This naturally leads to the transfer or \emph{Perron-Frobenius} operator $\mathcal{P}^t$ which is defined by
		\begin{align*}   
		 \rho\mapsto \mathcal{P}^t\rho,  \ \ (\mathcal{P}^t\rho)(z) =\rho(\Phi^{-t}z)| \det{\mathrm{D}\Phi^{-t}z}| .
		\end{align*}
	
	In this case, a canonical choice for the function space $Y$ is $\Lp{1}{\Omega}$ on which $\mathcal{P}^t$ becomes a strongly continuous (stochastic) semigroup, see, e.g., \cite[Section 7.4]{LasM94}, meaning that $\mathcal{P}^t$ is a positivity preserving contraction semigroup. Let us emphasize that $\mathcal{P}^t$ is  an isometry with its eigenvalues being located on the unit circle, see \cite[Corollary 2.5]{FroJK18}.
	
	The operators $\mathcal{K}^t$ and $\mathcal{P}^t$ are adjoint to each other, i.e., for all $\rho\in  \Lp{1}{\Omega},\psi \in \Lp{\infty}{\Omega}$ and $t\ge0$, we have
	\begin{align*}   
	 \langle \mathcal{P}^t \rho,\psi \rangle_{\Lp{1}{\Omega},\Lp{\infty}{\Omega}} =  \langle  \rho,\mathcal{K}^t\psi \rangle_{\Lp{1}{\Omega},\Lp{\infty}{\Omega}}.
	\end{align*}
	Turning to the infinitesimal generators $\mathcal{A}_{\mathcal{K}}$ and $\mathcal{A}_{\mathcal{P}}$, for $f$ as in \eqref{eq:dyn_sys}, we have the characterization (\cite[Section 7.6]{LasM94})
	\begin{align*}   
	 \mathcal{A}_{\mathcal{K}} \psi = f^\top \nabla \psi, \quad \mathcal{A}_{\mathcal{P}} \rho = - \mathrm{div}(\rho f).
	\end{align*}
	In other words, $\mathcal{A}_{\mathcal{K}}$ and $\mathcal{A}_{\mathcal{P}}$ are hyperbolic first-order differential operators. Note that it is also common to assume that \eqref{eq:dyn_sys} is only accurate up to small stochastic perturbations in form of a white noise term which renders the resulting generators parabolic, see, e.g., \cite{FroJK18}. In this case, both Koopman and Perron-Frobenius operators are frequently considered on weighted spaces which are then assumed to be related to the invariant probability density of the stochastic dynamics. Here, we will restrict to the fully deterministic and thus hyperbolic case.
	 
	\subsection{Weighted $\Lp{p}{\Omega}$ spaces}
	
	Here, we briefly recall the theory of weighted Lebesgue spaces. The material is rather standard and can be found in any standard textbook, e.g., \cite{Alt12}. 
	For a given measurable \emph{weight function} 
	\begin{align}\label{eq:weighting}    
	w\colon \Omega \to \R_+\quad \text{with } w^{-1} \in W^{1,\infty}(\Omega)
	\end{align}
	we define the following weighted Lebesgue spaces.
	\begin{definition}[The space $\Lpw{p}{w}{\Omega}$]
		Given a weight function $w$ as in \eqref{eq:weighting}, we define
		\[
		\| \phi \|_{\Lpw{p}{w}{\Omega}} := \left\{ \begin{array}{ccl}\left(
			\int_{\Omega} |\phi(x)|^p  w(x) \dint x \right)^{\frac{1}{p}}, & \qquad & \text{for} \; 1 \le p < \infty, 
			\vspace{.2cm}\\
			\underset{ x \in \Omega} { \esssup } \left| \phi(x) w(x)  \right|, & \qquad & \text{for} \; p = \infty 
		\end{array}\right.
		\]	
		and the corresponding space $\Lpw{p}{w}{\Omega}$ as
		\begin{equation*}
			\Lpw{p}{w}{\Omega} := \left\{ \phi \colon \Omega \to \R \; \big| \; \| \phi \|_{\Lpw{p}{w}{\Omega}} < \infty \right\}.
		\end{equation*}
	\end{definition}
	Note that in the case $p = \infty$ our definition does not coincide with the usual definition for Lebesgue spaces from, e.g., \cite[Definition 3.15]{Alt12} where the weighting $w$ is not included within the essential supremum. Our modification is motivated by a different dual pairing which we utilize frequently throughout the rest of this article. We now have the following result.
	\begin{lemma}
		Let $X = \Lpw{p}{w}{\Omega}$ for some $1 \le p < \infty$ and $\varphi \in X^\ast$ then it holds that
		\[
		\inner[X,X^\ast]{\phi}{\varphi} = \left\{ \begin{array}{lcl}
			\int_{\Omega} \phi(x) {\varphi}(x) w(x) \dint x & \qquad & \text{for} \; 1 < p < \infty ,
			\vspace{.2cm}
			\\
			\int_{\Omega} \phi(x) \varphi(x) w^2(x) \dint x & \qquad & \text{for} \; p = 1,			
		\end{array}		\right.
		\]
		where $ \varphi \in \Lpw{p^\ast}{w}{\Omega}$ and $ \frac{1}{p} + \frac{1}{p^\ast} = 1$.
	\end{lemma}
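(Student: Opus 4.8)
The plan is to reduce the weighted setting to the unweighted one through an explicit isometric isomorphism and then transport the classical $\Lp{p}{\Omega}$-duality through it. For $1\le p<\infty$ I would introduce the multiplication operator
\[
T_p\colon \Lp{p}{\Omega}\to\Lpw{p}{w}{\Omega},\qquad T_p\psi:=w^{-1/p}\psi ,
\]
which is well defined since $w$ is measurable and strictly positive, hence so is $w^{\pm 1/p}$. A direct computation gives $\norm[\Lpw{p}{w}{\Omega}]{T_p\psi}^p=\int_\Omega|w^{-1/p}\psi|^p w\dint x=\int_\Omega|\psi|^p\dint x=\norm[\Lp{p}{\Omega}]{\psi}^p$, and $T_p$ is a bijection with inverse $\phi\mapsto w^{1/p}\phi$, which does land in $\Lp{p}{\Omega}$ precisely because $\norm[\Lpw{p}{w}{\Omega}]{\phi}<\infty$. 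Thus $T_p$ is an isometric isomorphism, so $\Lpw{p}{w}{\Omega}$ is a Banach space and its adjoint $T_p^\ast\colon \Lpw{p}{w}{\Omega}^\ast\to\Lp{p}{\Omega}^\ast$ is again an isometric isomorphism.

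Next, given $\varphi\in X^\ast$ with $X=\Lpw{p}{w}{\Omega}$, I would put $\eta:=T_p^\ast\varphi\in\Lp{p}{\Omega}^\ast$. For $1<p<\infty$ the Riesz representation theorem for $\Lp{p}{\Omega}$ identifies $\eta$ with a function in $\Lp{p^\ast}{\Omega}$, still denoted $\eta$, satisfying $\inner[\Lp{p}{\Omega},\Lp{p^\ast}{\Omega}]{\psi}{\eta}=\int_\Omega\psi\,\eta\dint x$ for all $\psi\in\Lp{p}{\Omega}$. Writing a generic $\phi\in X$ as $\phi=T_p\psi$ with $\psi:=w^{1/p}\phi$, using the definition of the adjoint and the identity $\tfrac1p-1=-\tfrac1{p^\ast}$, I obtain
\[
\inner[X,X^\ast]{\phi}{\varphi}=\inner[X,X^\ast]{T_p\psi}{\varphi}=\inner[\Lp{p}{\Omega},\Lp{p^\ast}{\Omega}]{\psi}{\eta}=\int_\Omega w^{1/p}\phi\,\eta\dint x=\int_\Omega \phi\,(w^{-1/p^\ast}\eta)\,w\dint x .
\]
Renaming $\varphi:=w^{-1/p^\ast}\eta$ (the same abuse of notation as in the statement), the computation $\norm[\Lpw{p^\ast}{w}{\Omega}]{\varphi}^{p^\ast}=\int_\Omega|w^{-1/p^\ast}\eta|^{p^\ast}w\dint x=\norm[\Lp{p^\ast}{\Omega}]{\eta}^{p^\ast}<\infty$ shows $\varphi\in\Lpw{p^\ast}{w}{\Omega}$, which is the first claimed formula.

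For $p=1$ I would run the same argument with $T_1\psi=w^{-1}\psi$ and the classical identification $\Lp{1}{\Omega}^\ast\cong\Lp{\infty}{\Omega}$; writing $\phi=T_1\psi$, i.e.\ $\psi=w\phi$, gives $\inner[X,X^\ast]{\phi}{\varphi}=\int_\Omega w\phi\,\eta\dint x=\int_\Omega\phi\,(w^{-1}\eta)\,w^2\dint x$, and since $\norm[\Lpw{\infty}{w}{\Omega}]{w^{-1}\eta}=\esssup_{x\in\Omega}|\eta(x)|=\norm[\Lp{\infty}{\Omega}]{\eta}<\infty$, the representative $w^{-1}\eta$ lies in $\Lpw{\infty}{w}{\Omega}$. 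This last point also explains why the pairing for $p=1$ carries the weight $w^2$ rather than $w$: the representative is measured in the $w$-weighted $\Lp{\infty}{\Omega}$ norm, so one power of $w$ is absorbed into its normalization. I do not expect a genuine obstacle here; the argument is essentially bookkeeping of the powers of $w$, and the only point worth flagging is that it uses merely measurability and a.e.\ positivity of $w$, not the full regularity $w^{-1}\in W^{1,\infty}(\Omega)$ of \eqref{eq:weighting}. Uniqueness of the representative $\varphi$ up to equality a.e.\ follows in the standard way from the density of simple functions in $X$.
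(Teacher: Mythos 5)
Your argument is correct, but it takes a different route from the paper. The paper disposes of the lemma in one line by invoking the $\Lp{p}{\mu}$-duality theorem for a general measure (\cite[Theorem 6.12]{Alt12}) applied directly to $\mathrm{d}\mu = w\dint{x}$, which immediately yields the representative in $\Lpw{p^\ast}{w}{\Omega}$ and the pairing $\int_\Omega \phi\varphi w \dint{x}$ for $1<p<\infty$; the ``straightforward modification'' for $p=1$ is then only the conversion from the standard $\Lp{\infty}{\mu}$ normalization to the paper's nonstandard $\Lpw{\infty}{w}{\Omega}$ norm (which carries $w$ inside the essential supremum), producing the extra factor of $w$ in the pairing. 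You instead build the explicit isometric isomorphism $T_p\psi = w^{-1/p}\psi$ onto the \emph{unweighted} space and transport the classical Lebesgue duality through $T_p^\ast$. Both are sound; what your version buys is self-containedness (only the textbook $\Lp{p}{\Omega}$-duality for Lebesgue measure on a bounded domain is needed, and $\sigma$-finiteness is automatic there) and the explicit bookkeeping that makes the $w^2$ in the $p=1$ case transparent rather than a consequence of a convention change. Your observation that only measurability and a.e.\ positivity of $w$ are used, not $w^{-1}\in W^{1,\infty}(\Omega)$, is accurate and applies equally to the paper's citation-based proof. The one point I would tighten is the final sentence: uniqueness of the representative follows from the injectivity of the map $\varphi\mapsto\left(\phi\mapsto\int_\Omega\phi\varphi w\dint{x}\right)$, which is part of the duality theorem you already invoked, so the appeal to density of simple functions is redundant rather than wrong.
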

	\begin{proof}
		 For $1<p<\infty$, the statement is well-known \cite[Theorem 6.12]{Alt12}. For $p=1$, the assertion also follows from \cite[Theorem 6.12]{Alt12} by straightforward modification. 
	\end{proof}
	From now on we will identify elements $\varphi \in \left( \Lpw{1}{w}{\Omega} \right)^\ast$ with $\varphi \in \Lpw{\infty}{w}{\Omega}$, i.e., we identify $\left( \Lpw{1}{w}{\Omega} \right)^\ast \equiv \Lpw{\infty}{w}{\Omega}$. As $\Lpw{1}{w}{\Omega}$ and $\Lpw{\infty}{w}{\Omega}$ lack a Hilbert space structure the following embeddings will become useful.
	\begin{lemma}\label{lem:lpw_embeddings}
		For $w$ as in \eqref{eq:weighting}, it holds that
		\[
		\Lpw{\infty}{w}{\Omega} \subset \Lpw{2}{w^2}{\Omega} \subset \Lpw{1}{w}{\Omega}
		\]
		where the embeddings are dense.
	\end{lemma}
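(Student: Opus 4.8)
The plan is to reduce the entire statement to the classical chain of dense, continuous embeddings $\Lp{\infty}{\Omega}\subset\Lp{2}{\Omega}\subset\Lp{1}{\Omega}$, which holds because $\Omega$ is bounded and hence has finite Lebesgue measure. The vehicle for this reduction is the multiplication operator $M_w\colon \phi\mapsto w\phi$. Since by \eqref{eq:weighting} the weight $w$ is measurable with $0<w(x)<\infty$ for every $x\in\Omega$, the map $M_w$ is a linear bijection on (equivalence classes of) measurable functions on $\Omega$, with inverse $g\mapsto w^{-1}g$. Reading the three weighted norms off the definition, one has
\[
\norm[\Lpw{\infty}{w}{\Omega}]{\phi}=\norm[\Lp{\infty}{\Omega}]{w\phi},\qquad \norm[\Lpw{2}{w^2}{\Omega}]{\phi}=\norm[\Lp{2}{\Omega}]{w\phi},\qquad \norm[\Lpw{1}{w}{\Omega}]{\phi}=\norm[\Lp{1}{\Omega}]{w\phi},
\]
so that $M_w$ restricts to isometric isomorphisms from $\Lpw{\infty}{w}{\Omega}$, $\Lpw{2}{w^2}{\Omega}$, $\Lpw{1}{w}{\Omega}$ onto $\Lp{\infty}{\Omega}$, $\Lp{2}{\Omega}$, $\Lp{1}{\Omega}$, respectively; surjectivity is clear since for $g$ in any of the target spaces the function $w^{-1}g$ has finite weighted norm and is mapped to $g$.

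Then I would record the unweighted embeddings. Because $|\Omega|<\infty$, H\"older's (respectively Cauchy--Schwarz) inequality gives $\norm[\Lp{2}{\Omega}]{g}\le |\Omega|^{1/2}\norm[\Lp{\infty}{\Omega}]{g}$ for $g\in\Lp{\infty}{\Omega}$ and $\norm[\Lp{1}{\Omega}]{g}\le |\Omega|^{1/2}\norm[\Lp{2}{\Omega}]{g}$ for $g\in\Lp{2}{\Omega}$. Transporting these two inequalities back through the isometry $M_w$ immediately yields $\Lpw{\infty}{w}{\Omega}\subset\Lpw{2}{w^2}{\Omega}\subset\Lpw{1}{w}{\Omega}$ with continuous embeddings and the same constants $|\Omega|^{1/2}$.

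For the density I would use a truncation argument on the unweighted side: for $g\in\Lp{2}{\Omega}$ the truncations $g_n:=g\,\mathbf{1}_{\{|g|\le n\}}$ lie in $\Lp{\infty}{\Omega}$ and converge to $g$ in $\Lp{2}{\Omega}$ by dominated convergence, so $\Lp{\infty}{\Omega}$ is dense in $\Lp{2}{\Omega}$; applying the same truncations to $g\in\Lp{1}{\Omega}$ and using $|\Omega|<\infty$ (so that $g_n\in\Lp{\infty}{\Omega}\subset\Lp{2}{\Omega}$) shows $\Lp{2}{\Omega}$ is dense in $\Lp{1}{\Omega}$. Pulling the approximating sequences $w^{-1}g_n$ back through $M_w^{-1}$ then gives the density of the two weighted embeddings, completing the proof. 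There is no genuine obstacle here; the only point deserving attention is the surjectivity of $M_w$ onto the unweighted spaces, which is exactly where the strict positivity and finiteness of $w$ are used. Notably, the $W^{1,\infty}$-regularity of $w^{-1}$ in \eqref{eq:weighting} plays no role in this particular lemma and is only needed for the later semigroup and generator arguments.
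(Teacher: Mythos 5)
Your proof is correct and is essentially the paper's argument in a tidier packaging: the isometries $\norm[\Lpw{p}{\cdot}{\Omega}]{\phi}=\norm[\Lp{p}{\Omega}]{w\phi}$ are exactly the identities the paper uses implicitly when it bounds $\norm[\Lpw{1}{w}{\Omega}]{\phi}$ by $\norm[\Lp{2}{\Omega}]{\phi w}\,|\Omega|^{1/2}$ via H\"older, and your truncation $\mathbf{1}_{\{|w\phi|\le n\}}\phi$ is the same cut-off $\chi_{\{\phi w\le m\}}\phi$ the paper feeds into dominated convergence. Your closing observation that only measurability, a.e.\ positivity of $w$, and $|\Omega|<\infty$ are needed here (not the $W^{1,\infty}$ regularity of $w^{-1}$) is accurate.
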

	\begin{proof}
		With the Hölder inequality we get 
		\begin{align*}
			\| \phi \|_{\Lpw{1}{w}{\Omega}} = \int_{\Omega} | \phi(x) | w(x) \dint x
			\le  \| \phi(x) w(x) \|_{\Lp{2}{\Omega}} | \Omega |^{1/2} 	= C(\Omega) \| \phi \|_{\Lpw{2}{w^2}{\Omega} }
		\end{align*}
		as well as 
		\begin{align*}
			\| \phi \|^2_{\Lpw{2}{w^2}{\Omega}} =& \int_{\Omega} | \phi(x) |^2 w(x)^2 \dint x 
			\le  \| \phi(x) w(x) \|^2_{\Lp{\infty}{\Omega}} | \Omega | = C(\Omega) \| \phi \|^2_{\Lpw{\infty}{w}{\Omega} }
		\end{align*}
		this proves $\Lpw{\infty}{w}{\Omega} \subseteq \Lpw{2}{w^2}{\Omega} \subseteq \Lpw{1}{w}{\Omega}$. To show that $\Lpw{\infty}{w}{\Omega}$ is dense in $\Lpw{1}{w}{\Omega}$, let $\phi \in \Lpw{1}{w}{\Omega}$. We define
		$
			\phi_m := \chi_{ \left\{ \phi w \le m \right\} } \phi 
		$
		for some $m \in \N$ and it follows 
		\begin{align*}
			\| \phi - \phi_m \| =  \| \chi_{ \{ \phi w > m\} } \phi  \|_{\Lpw{1}{w}{\Omega}} = \int_\Omega \chi_{ \{ \phi w > m\} }(x) |\phi(x)| w(x) \mathrm{d} x.
		\end{align*}
		For almost every $x \in \Omega$ we have $( \chi_{ \{ \phi w > m\} } \phi w )(x) \rightarrow 0$ and $|\chi_{\{ \phi > m\} } \phi w |(x) \le |\phi w|(x)$. By the dominated convergence theorem \cite[A.3.21]{Alt12} it follows 
		\[
			\lim_{m \rightarrow \infty} \| \phi - \phi_m \|_{\Lpw{1}{w}{\Omega}} = 0
		\]
		The same construction can be used to show that $\Lpw{\infty}{w}{\Omega}$ is dense in $\Lpw{2}{w^2}{\Omega}$.
		Lastly, by the inequality shown above $\phi_m \in \Lpw{2}{w^2}{\Omega}$ and therefore $\Lpw{2}{w^2}{\Omega}$ is also dense in $\Lpw{1}{w}{\Omega}$. 
	\end{proof}
In one of the later results, namely \cref{theorem:lyapunov:semigroup_property}, we will need some density result that is given in the following \cref{lemma:w1inf_dense}. Its proof consists mainly of standard textbook arguments which can be found for example in \cite[Chapter 4.4]{Eva98} and which are adapted to the weighted space.
\begin{lemma} If $\frac{1}{w} \in W^{m,\infty}(\Omega)$ for some $m \in \N$ then $W^{m,\infty}(\Omega)$ is dense in $\Lpw{\infty}{w}{\Omega}$ and $\Lpw{2}{w^2}{\Omega}$ with respect to the weak-* topology.
	\label{lemma:w1inf_dense}
\end{lemma}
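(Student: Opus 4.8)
The plan is to peel off the weight and reduce to the classical density of smooth functions --- weak-* density in $\Lp{\infty}{\Omega}$ and norm density in $\Lp{2}{\Omega}$. Given $\phi \in \Lpw{\infty}{w}{\Omega}$ (resp.\ $\phi \in \Lpw{2}{w^2}{\Omega}$), put $u := \phi\, w$; then $\norm[\Lp{\infty}{\Omega}]{u} = \norm[\Lpw{\infty}{w}{\Omega}]{\phi}$ (resp.\ $\norm[\Lp{2}{\Omega}]{u} = \norm[\Lpw{2}{w^2}{\Omega}]{\phi}$), so $u$ lies in the unweighted space, and $\phi = u \cdot \tfrac1w$ almost everywhere. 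Two observations drive the argument. First, if $u_n$ is smooth with all derivatives bounded on $\overline{\Omega}$ --- e.g.\ $u_n \in \CcInfty{\R^d}$ restricted to $\Omega$ --- then $\phi_n := u_n \cdot \tfrac1w$ belongs to $W^{m,\infty}(\Omega)$: since $u_n$ is smooth, the weak derivatives of the product are given by the Leibniz formula and every summand $(D^\beta u_n)(D^{\alpha-\beta}\tfrac1w)$ with $|\alpha|\le m$ is bounded because $\tfrac1w \in W^{m,\infty}(\Omega)$. Second, by the $p=1$ dual pairing formula, for any $\psi \in \Lpw{1}{w}{\Omega}$,
\[
 \inner[\Lpw{1}{w}{\Omega},\Lpw{\infty}{w}{\Omega}]{\psi}{\phi_n} = \int_\Omega \phi_n\, \psi\, w^2 \dint x = \int_\Omega u_n\,(\psi w) \dint x, \qquad \psi w \in \Lp{1}{\Omega},
\]
and the same identity with $\psi \in \Lpw{2}{w^2}{\Omega}$ gives $\psi w \in \Lp{2}{\Omega}$. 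Since $u \mapsto u\cdot\tfrac1w$ and $\psi \mapsto \psi w$ are isometric isomorphisms onto the respective weighted spaces, weak-* (resp.\ norm) convergence $\phi_n \to \phi$ is equivalent to weak-* (resp.\ norm) convergence $u_n \to u$ in $\Lp{\infty}{\Omega}$ (resp.\ $\Lp{2}{\Omega}$).

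The next step is to build the $u_n$ by mollification. Extend $u$ by zero to $\tilde u$ on $\R^d$ --- here $|\Omega| < \infty$ is used, so $\tilde u \in \Lp{\infty}{\R^d}$, resp.\ $\tilde u \in \Lp{2}{\R^d}$ --- and set $u_n := \eta_{1/n} * \tilde u$ for a standard mollifier $\eta$. Then $u_n \in \CcInfty{\R^d}$, $\norm[\Lp{\infty}{\Omega}]{u_n} \le \norm[\Lp{\infty}{\Omega}]{u}$, $u_n \to u$ almost everywhere on $\Omega$, and $u_n \to u$ in $\Lp{2}{\Omega}$ whenever $u \in \Lp{2}{\Omega}$; these are the standard properties of mollifiers on a bounded domain. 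Defining $\phi_n := u_n \cdot \tfrac1w$ as above, the previous paragraph shows $\phi_n \in W^{m,\infty}(\Omega)$ and $\norm[\Lpw{\infty}{w}{\Omega}]{\phi_n} = \norm[\Lp{\infty}{\Omega}]{u_n} < \infty$, so the $\phi_n$ form an admissible approximating sequence in $W^{m,\infty}(\Omega)$.

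Finally I would check convergence. For $\Lpw{\infty}{w}{\Omega}$: fix $\psi \in \Lpw{1}{w}{\Omega}$; then $|u_n\,\psi w| \le \norm[\Lp{\infty}{\Omega}]{u}\,|\psi w|$ with $\psi w \in \Lp{1}{\Omega}$, so the dominated convergence theorem together with $u_n \to u$ a.e.\ yields $\int_\Omega u_n(\psi w)\dint x \to \int_\Omega u(\psi w)\dint x$, i.e.\ $\phi_n \to \phi$ in the weak-* topology of $\Lpw{\infty}{w}{\Omega}$. For $\Lpw{2}{w^2}{\Omega}$ one even obtains norm convergence, $\norm[\Lpw{2}{w^2}{\Omega}]{\phi_n - \phi}^2 = \int_\Omega |u_n - u|^2 \dint x = \norm[\Lp{2}{\Omega}]{u_n - u}^2 \to 0$; since $\Lpw{2}{w^2}{\Omega}$ is a Hilbert space its weak-* and weak topologies coincide, and norm convergence is more than enough. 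This establishes density in both topologies.

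The mollifier calculus and the dual-pairing bookkeeping are routine. The one genuinely load-bearing point is the stability of $W^{m,\infty}(\Omega)$ under multiplication by $\tfrac1w$, which is exactly where the hypothesis $\tfrac1w\in W^{m,\infty}(\Omega)$ --- rather than only $W^{1,\infty}(\Omega)$ as in \eqref{eq:weighting} --- enters, and which relies on the Leibniz rule for weak derivatives being valid on the $\mathcal{C}^1$ (hence Lipschitz, hence Sobolev-extension) domain $\Omega$. It is also worth emphasising in the write-up that for the $\Lpw{\infty}{w}{\Omega}$-part the passage to the weak-* topology is unavoidable, since smooth functions are not norm-dense in $\Lpw{\infty}{w}{\Omega}$, whereas the $\Lpw{2}{w^2}{\Omega}$-statement, as shown, already holds for the norm.
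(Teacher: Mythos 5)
Your proof is correct and follows essentially the same route as the paper's: both mollify the zero-extension of $u=\phi w$ and then divide by $w$, using the hypothesis $\tfrac1w\in W^{m,\infty}(\Omega)$ to place the approximants $\phi_n=(\eta_{1/n}\ast \tilde u)\tfrac1w$ in $W^{m,\infty}(\Omega)$. The only (immaterial) difference is in verifying weak-* convergence: the paper shifts the mollifier onto the test function via Fubini and invokes $\Lp{1}{\R^{d}}$- resp. $\Lp{2}{\R^{d}}$-convergence of mollifications, whereas you use a.e. convergence of $u_n$ together with the dominated convergence theorem (and, for $\Lpw{2}{w^2}{\Omega}$, outright norm convergence).
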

\begin{proof}
	Let us define the extension 
	\[
	E \colon \Lp{p}{\Omega} \to \Lp{p}{\R^{d}}, \quad \text{with} \quad E \phi := \left\{ \begin{array}{lcc}
		\phi(x) & \qquad & \text{for $x \in \Omega$}\\
		0 & \qquad & \text{else}
	\end{array}\right. .
	\]
	Let us start with $X^*=\Lpw{\infty}{w}{\Omega}$.
	For $\psi \in \Lpw{\infty}{w}{\Omega}$ we define
	\[
	\psi_{k} := \underbrace{( \eta_{1/k} \ast E ( \psi w ) )\big|_{\Omega}}_{ \in \C{\infty}{\bar{\Omega}}} \frac{1}{w} \in W^{m,\infty}(\Omega)
	\]
	where $\eta_{\varepsilon}$ denotes the standard mollifier from \cite[Chapter 4.4]{Eva98}. Now let $\phi \in X=\Lpw{1}{w}{\Omega}$ then
	\begin{align*}
		| \inner[X,X^\ast]{\phi}{\psi_k \hspace{-1px} - \hspace{-1px} \psi}\hspace{-2px}  | = &  \big| \hspace{-2px} \int_\Omega  \phi(x) \left( \psi_k (x)  \hspace{-1px} - \hspace{-1px} \psi(x) \right)  w(x)^2 \mathrm{d}x \big|\\
		= &  \big|  \hspace{-2px} \int_{\Omega} \phi(x)\! \int_{\R^n} \hspace{-5px}  E (\psi w)(y) \eta(y \hspace{-1px} - \hspace{-1px}x) \dint y \; w(x) \dint x - \hspace{-3px} \int_{\Omega} \phi(x) \psi(x) w(x)^2  \dint x  \big|. \\
		\intertext{We can utilize Fubini \cite[A6.10]{Alt12} to show that} 
		| \inner[X,X^\ast]{\phi}{\psi_k \hspace{-1px} - \hspace{-1px}\psi}\hspace{-2px} | = & \big| \int_{\R^n}\hspace{-5px} E (\psi w) (y) \int_{\Omega}  \phi(x) \eta(y \hspace{-1px} - \hspace{-1px}x) w(x) \dint x \dint y - \hspace{-3px} \int_{\Omega} \phi(x) \psi(x) w(x)^2  \dint x  \big| \\
		= & \big| \int_{\Omega} \left(  \int_{\R^n}  E (\phi w )(x) \eta(y-x) \dint x - \phi(y) w(y) \right) \psi(y) w(y)  \dint y \; \big| \\
		\le & \| \eta_{1/k} \ast E ( \phi w ) - E ( \phi w ) \|_{\Lp{1}{\R^n}} \| \psi w \|_{\Lp{\infty}{\Omega}} .
	\end{align*}
	We know that $ \phi w  \in \Lp{1}{\Omega}$ and that there exists a compact $V$ such that $U \subset V \subset \R^{d}$. Thus, with \cite[Appendix, Theorem 7]{Eva98} it follows that $\| \eta_{1/k} \ast E ( \phi w ) - E (\phi w) \|_{\Lp{1}{\R^{d}}}  \underset{k \rightarrow \infty}{\rightarrow} 0$ 
	and since $\| \psi w \|_{\Lp{\infty}{\Omega}} = \| \psi \|_{\Lpw{\infty}{w}{\Omega}}$ the statement is shown. 
	For $\Lpw{2}{w^2}{\Omega}$ one can follow the same construction and arrive at
	\begin{align*}
		| \inner[X,X^\ast]{\phi}{\psi_k - \psi} | \le &  \| \eta_{1/k} \ast E ( \phi w ) - E ( \phi w ) \|_{\Lp{2}{\R^{d}}} \| \psi w \|_{\Lp{2}{\Omega}}.			
	\end{align*} 
	From here, the statement again follows with \cite[Appendix, Theorem 7]{Eva98} and the fact that $\| \psi w \|_{\Lp{2}{\Omega}} = \| \psi \|_{\Lpw{2}{w^2}{\Omega}}$.
\end{proof}

	\subsection{Composition operators on $\Lpw{p}{w}{\Omega}$-spaces}
	\label{subsec:comp_lpw}
	
	In this section, we study the Koopman and the Perron-Frobenius operator on the space $\Lpw{p}{w}{\Omega}$. From now on, we fix the Banach space $X:=\Lpw{1}{w}{\Omega}$ and its dual space $X^*=\Lpw{\infty}{w}{\Omega}$ as well as the Hilbert space $H:=\Lpw{2}{w^2}{\Omega}$ which are related via the embeddings from \cref{lem:lpw_embeddings}. If a statement holds true in both of these spaces we will use $Y \in \{ X, H\}$ as a placeholder. Since the Koopman operator associated with the dynamics \eqref{eq:dyn_sys} is a special composition operator, we largely follow the (more general) exposition from \cite[Chapter 2]{SinM93}. For this purpose, we consider the measure space $(\Omega, \mathcal{B}, \mu)$ with measure
	\begin{equation*}
	\mu(B) := \int_{B} w(z) \dint z \qquad \text{for} \; B \in \mathcal{B}.
	\end{equation*}
	Note that $\mu$ is a $\sigma$-finite measure.
	Let $T\colon \Omega \to \Omega$ then be a \emph{measurable} transformation on $\Omega$, i.e., assume that $T^{-1}(S)\in \mathcal{B}$ for all $S\in \mathcal{B}$. Further assume that $T$ is \emph{non-singular} meaning that $\mu(S)=0$ implies $\mu(T^{-1}(S))=0$ for all $S\in \mathcal{B}$. By the Radon-Nikod\'{y}m theorem (\cite[Theorem 6.11]{Alt12}), there exists $\rho_T\in  \Lpw{1}{w}{\Omega}$ s.t. 
	\begin{equation}
	 \mu(T^{-1}(S)) = \int_S \rho_T(x) w(x) \dint x \qquad \forall S \in \mathcal{B}.
	 \label{eq:weighted_measure}
	\end{equation}
	Consequently, if $\| \rho_T \|_{\Lp{\infty}{\Omega}} < \infty$ then $T$ is non-singular. If $T\in C^1(\Omega,\Omega)$ then a change of variables (\cite[Theorem 7.26]{Rud87}) implies that 
	\begin{align}
	\label{eq:radon_nikodym_transformation}
		\rho_T(a) = \left\{ 
	\begin{array}{lcc}
		| \det \mathrm{D} T (z) |^{-1} \frac{w(a)}{w(z)}, & \quad \text{if there exists } \; z \in \Omega,\; \text{s.t.}\; a = T(z),\\
		0 & \quad \text{else}.
	\end{array}
	\right.	
	\end{align}
The non-singularity of $T$ assures that the composition operator
\begin{align*}
	\begin{array}{rlcl}
		C_T \colon & \Lpw{p}{w}{\Omega} & \to & \Lpw{p}{w}{\Omega}, \\
		& \varphi & \mapsto & \varphi \circ T,
	\end{array}
\end{align*}
is well-defined.
\begin{definition}
\label{defi:lyapunov:composition_semigroup}
	 On $Y^\ast$ with $ Y \in \{H,X\}$, we define the \emph{composition semigroup} as the family of composition operators with respect to the transformation $T = \Phi^t$ induced by the solution operator of \eqref{eq:dyn_sys} by
	\[
	\begin{array}{rlcl}
		S^\ast \colon & [0,\infty) & \to & \mathcal{L}( Y^\ast ) , \\
		& t & \mapsto & S^\ast(t)
	\end{array}
	\]
	and $ S^\ast(t) \varphi = \varphi \circ \Phi^t$ for all $\varphi \in Y^\ast$.
\end{definition}
To justify the name, we will show in \cref{theorem:lyapunov:semigroup_property} that $S^*(t)$ is a well-defined weak-* continuous semigroup, provided that the weight function $w$ and the dynamic $f$ satisfy additional properties.
\begin{assumption}\label{assumption:comp_weight}
	We assume that the weighting $w$ and the dynamic $f$ from equation \eqref{eq:dyn_sys} are such that
	\begin{enumerate}[label=(\roman*)]
		\item $f_i \in \Lpw{\infty}{w}{\Omega} \cap \C{1}{\bar{\Omega}}$, for $i = 1, \dots, n$
		\item 
		the following inequality holds
		\[
		\esssup_{x \in \Omega} \; - \frac{ f(x)^\top \nabla{w(x)} }{w(x)} = \omega_0 < \infty.
		\]
	\end{enumerate} 
\end{assumption}
From now on, we will always assume that \cref{assumption:comp_weight} is satisfied. The subsequent inequality will be used in various places throughout the rest of this paper. It provides an exponential bound for the weight along trajectories.
\begin{lemma}
	\label{lemma:decay_bound}
	Let $(\Phi^{t}(z))_{t \ge 0}$ denote the trajectory of the dynamical system \eqref{eq:dyn_sys}. It holds that
	\[
		w(z) \le \exp( t \omega_0 ) w(\Phi^t(z)) \qquad \text{for}\; t \in [0,\infty),\; z \in \Omega.
	\]
\end{lemma}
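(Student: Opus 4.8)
I would first pass to the reciprocal weight $v := 1/w = w^{-1}$, which by \eqref{eq:weighting} lies in $W^{1,\infty}(\Omega)$ — hence is Lipschitz continuous and bounded — and is strictly positive on $\Omega$. Since $\nabla w = -v^{-2}\,\nabla v$ wherever $v$ is differentiable, \cref{assumption:comp_weight}(ii) is exactly the almost-everywhere differential inequality $f(x)^\top \nabla v(x) \le \omega_0\, v(x)$ on $\Omega$, and dividing the claimed estimate by $w(z)\,w(\Phi^t(z)) > 0$ shows that the lemma is equivalent to $v(\Phi^t(z)) \le e^{\omega_0 t}\, v(z)$ for all $z \in \Omega$ and $t \ge 0$. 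If $v$ were continuously differentiable this would be immediate: along the $\mathcal{C}^1$ trajectory $\gamma(t) := \Phi^t(z)$, which solves $\dot\gamma = f(\gamma)$, the chain rule and this inequality give $\tfrac{\mathrm d}{\mathrm dt}\, v(\gamma(t)) \le \omega_0\, v(\gamma(t))$, so that $t \mapsto e^{-\omega_0 t}\, v(\gamma(t))$ is non-increasing and the estimate follows by comparison with its value at $t = 0$. The task is to justify this when $\nabla v$ is only defined almost everywhere.

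The plan is to regularise by mollification, in the spirit of the proof of \cref{lemma:w1inf_dense}. Fix a compact $K \subset \Omega$ and, for $0 < \varepsilon < \dist(K, \partial\Omega)$, set $v_\varepsilon := \eta_\varepsilon \ast v$ with $\eta_\varepsilon$ the standard mollifier; this is smooth near $K$, converges to $v$ uniformly on $K$, and satisfies $v_\varepsilon \ge c_K > 0$ on $K$ for small $\varepsilon$. The crux is to pass the differential inequality to $v_\varepsilon$: writing $f(x)^\top \nabla v_\varepsilon(x) = \int \eta_\varepsilon(y)\, f(x)^\top \nabla v(x-y) \dint{y}$ and inserting $f(x) = f(x-y) + \bigl(f(x) - f(x-y)\bigr)$, the almost-everywhere bound on $f^\top \nabla v$ together with the uniform continuity of $f$ on compact sets yields, on $K$,
\[
  f(x)^\top \nabla v_\varepsilon(x) \;\le\; \omega_0\, v_\varepsilon(x) + C_K\, m_f(\varepsilon) \;\le\; \bigl(\omega_0 + \delta_K(\varepsilon)\bigr)\, v_\varepsilon(x),
\]
where $m_f(\varepsilon) \to 0$ is a local modulus of continuity of $f$ and $\delta_K(\varepsilon) := C_K\, m_f(\varepsilon)/c_K \to 0$ as $\varepsilon \to 0$ — positivity of $v$ being exactly what lets the additive error be absorbed into the rate. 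Then, for any $z$ whose trajectory remains in $K$ on $[0,T]$, the map $t \mapsto v_\varepsilon(\Phi^t(z))$ is $\mathcal{C}^1$ with $\tfrac{\mathrm d}{\mathrm dt}\, v_\varepsilon(\Phi^t(z)) = \nabla v_\varepsilon(\Phi^t(z))^\top f(\Phi^t(z)) \le \bigl(\omega_0 + \delta_K(\varepsilon)\bigr) v_\varepsilon(\Phi^t(z))$, so Gr\"onwall's inequality gives $v_\varepsilon(\Phi^t(z)) \le e^{(\omega_0 + \delta_K(\varepsilon))t}\, v_\varepsilon(z)$; letting $\varepsilon \to 0$ produces the desired $v(\Phi^t(z)) \le e^{\omega_0 t}\, v(z)$.

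To finish, given $z \in \Omega$ and $T > 0$ I would take $K := \{\Phi^t(z) : t \in [0,T]\}$, which is a compact subset of $\Omega$ whenever the trajectory does not meet $\partial\Omega$; this handles the generic case, and a trajectory reaching the boundary is dealt with by continuity of $w = 1/v$ on $\overline\Omega$ (with the convention $w = +\infty$ where $v = 0$), applying the estimate up to the first boundary-hitting time and passing to the limit. The single non-routine ingredient is the commutator-type estimate of the second paragraph — that mollification degrades $f^\top \nabla v \le \omega_0 v$ only by an $o(1)$ term; the rest is the elementary Gr\"onwall argument and the algebraic passage between $w$ and $v$.
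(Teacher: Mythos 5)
Your proposal is correct, and at its core it is the same argument the paper uses: a differential inequality along trajectories obtained from \cref{assumption:comp_weight}(ii), closed by Gr\"onwall. The paper's proof is a two-line version of this — it differentiates $s \mapsto w(\Phi^{t-s}(z))$ backward along the trajectory, bounds $-f^\top\nabla w \le \omega_0 w$, and applies Gr\"onwall — whereas you work with $v = 1/w$ forward in time; these are algebraically equivalent formulations of the same estimate. What you add, and what the paper silently skips, is the justification of the chain rule: since only $w^{-1} \in W^{1,\infty}(\Omega)$ is assumed, $\nabla v$ exists merely almost everywhere in $\Omega$, and a single trajectory is a Lebesgue-null set in $\R^{d}$ for $d \ge 2$, so the pointwise identity $\tfrac{\mathrm d}{\mathrm dt} v(\Phi^t(z)) = \nabla v(\Phi^t(z))^\top f(\Phi^t(z))$ is not automatic. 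Your mollification with the commutator estimate $f^\top\nabla v_\varepsilon \le \omega_0 v_\varepsilon + C_K m_f(\varepsilon)$, absorbed into the rate via the positive lower bound $v_\varepsilon \ge c_K$ on compacta, is a standard and correct way to repair this, so your write-up is in fact more rigorous than the paper's. Two minor remarks: the error absorption genuinely needs $v$ bounded away from zero on the compact tube around the trajectory, which holds since the continuous representative of $v=1/w$ is positive on $\Omega$; and the boundary-hitting case you defer to a limiting argument is a degenerate corner that the paper's proof does not address either (the statement implicitly requires $\Phi^t(z)\in\Omega$ for $w(\Phi^t(z))$ to be defined), so you are not losing anything relative to the original.
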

\begin{proof}
	For $t\ge s\ge0,$ differentiation of $w$ along trajectories yields 
	\begin{align*}
		\frac{\mathrm{d}}{\mathrm{d}s} w(\Phi^{t-s}(z) ) = - f(\Phi^{t-s}(z))^\top \nabla{w(\Phi^{t-s}(z))} \le \omega_0 w(\Phi^{t-s}(z)),
	\end{align*}
	where the last inequality follows from \cref{assumption:comp_weight}. The assertion now follows with Gronwall's lemma.
\end{proof}
\begin{theorem}
\label{theorem:lyapunov:semigroup_property}
	For $Y \in \{ X, H\}$ the composition semigroup $S^\ast(t) \in \mathcal{L}(Y^\ast)$ from  \cref{defi:lyapunov:composition_semigroup} is a well-defined weak-* continuous semigroup with infinitesimal generator
	\begin{align*}
		& A^\ast \colon \mathcal{D}(A^\ast) \subseteq Y^\ast \to Y^\ast,\quad A^\ast \varphi = f^\top \nabla \varphi.
	\end{align*}
	with $W^{1,\infty}(\Omega) \cap \Lpw{\infty}{w}{\Omega} \subseteq \mathcal{D}(A^\ast)$ in the case $Y = X$ and $W^{1,2}(\Omega) \cap \Lpw{2}{w^2}{\Omega} \subseteq \mathcal{D}(A^\ast)$ in the case $Y = H$.

\end{theorem}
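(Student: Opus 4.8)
\emph{Boundedness and the semigroup law.} I would first record that, since $\bar{\Omega}$ is compact and $f_i\in\C{1}{\bar{\Omega}}$ by \cref{assumption:comp_weight}(i), the flow $\Phi^t$ is of class $C^1$ and, by flow-invariance, maps $\bar{\Omega}$ into itself; restricted to $\Omega$ it is (extending $f$ to a $C^1$ vector field on $\R^{d}$ if necessary) a $C^1$-diffeomorphism onto the open set $\Phi^t(\Omega)$ with inverse $\Phi^{-t}$, and Liouville's formula $\det\mathrm{D}\Phi^t(z)=\exp\!\big(\int_0^t\divergence f(\Phi^s(z))\dint s\big)$ gives $e^{-Lt}\le|\det\mathrm{D}\Phi^{\pm t}|\le e^{Lt}$ with $L:=\norm[\Lp{\infty}{\bar{\Omega}}]{\divergence f}$. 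For $Y^\ast=\Lpw{\infty}{w}{\Omega}$, \cref{lemma:decay_bound} yields at once
\[
\norm[\Lpw{\infty}{w}{\Omega}]{S^\ast(t)\varphi}=\esssup_{z\in\Omega}|\varphi(\Phi^t(z))|\,w(z)\le e^{\omega_0 t}\esssup_{z\in\Omega}|\varphi(\Phi^t(z))|\,w(\Phi^t(z))\le e^{\omega_0 t}\norm[\Lpw{\infty}{w}{\Omega}]{\varphi},
\]
while for $Y^\ast=H$ the same estimate combined with the change of variables $a=\Phi^t(z)$ gives $\norm[\mathcal{L}(H)]{S^\ast(t)}\le e^{(\omega_0+L/2)t}$; in particular $\Phi^t$ is non-singular with $\rho_{\Phi^t}\in\Lp{\infty}{\Omega}$ by \eqref{eq:radon_nikodym_transformation}, so $S^\ast(t)\in\mathcal{L}(Y^\ast)$ is well-defined, and $S^\ast(0)=\mathrm{id}$, $S^\ast(t+s)=S^\ast(t)S^\ast(s)$ follow from $\Phi^0=\mathrm{id}$ and $\Phi^{t+s}=\Phi^t\circ\Phi^s$.

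\emph{Weak-* continuity.} I would obtain this by realizing $S^\ast(t)$ as the adjoint of a $C_0$-semigroup on $Y$. Using that $Y\hookrightarrow\Lpw{1}{w}{\Omega}$ by \cref{lem:lpw_embeddings} and that the pairing $\inner[Y,Y^\ast]{\cdot}{\cdot}$ (the inner product of $H$ when $Y=H$) equals $\int_\Omega\phi\varphi w^2\dint x$, the change of variables above identifies the pre-adjoint operator $S(t)\in\mathcal{L}(Y)$, characterized by $\inner[Y,Y^\ast]{S(t)\phi}{\varphi}=\inner[Y,Y^\ast]{\phi}{S^\ast(t)\varphi}$, as the weighted transfer (Perron-Frobenius) operator
\[
(S(t)\phi)(a)=\chi_{\Phi^t(\Omega)}(a)\,\phi(\Phi^{-t}(a))\,\frac{w(\Phi^{-t}(a))^2}{w(a)^2}\,|\det\mathrm{D}\Phi^{-t}(a)|.
\]
The same weighted estimates show that $S(t)$ is a bounded semigroup on $Y$; its strong continuity at $t=0$ follows by dominated convergence on the subspace $W^{1,\infty}(\Omega)\cap\Lpw{\infty}{w}{\Omega}$, which is norm-dense in $Y$ by the mollification construction in the proof of \cref{lemma:w1inf_dense} (using $w^{-1}\in W^{1,\infty}(\Omega)$), and extends to all of $Y$ by the uniform bound. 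Thus $S(t)$ is a $C_0$-semigroup, and by standard results on adjoint semigroups $S^\ast(t)=S(t)^\ast$ is a weak-* continuous semigroup on $Y^\ast$ whose weak-* infinitesimal generator is the Banach-space adjoint of the generator of $S(t)$. For $Y=X$ this detour seems unavoidable, the Koopman semigroup on $\Lpw{\infty}{w}{\Omega}$ not being strongly continuous.

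\emph{Identification of the generator.} Rather than compute the generator of $S(t)$ and dualize, I would identify the action of $A^\ast$ directly on the subspaces in the statement. Fix $\varphi$ in $W^{1,\infty}(\Omega)\cap\Lpw{\infty}{w}{\Omega}$ (case $Y=X$) or in $W^{1,2}(\Omega)\cap\Lpw{2}{w^2}{\Omega}$ (case $Y=H$), and put $\psi:=f^\top\nabla\varphi$. Since each $f_i\,w$ is bounded by \cref{assumption:comp_weight}(i), $\psi w=\sum_i(f_i\,w)\,\partial_i\varphi$ lies in $\Lp{\infty}{\Omega}$, resp.\ $\Lp{2}{\Omega}$, so $\psi\in Y^\ast$. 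One then verifies
\[
S^\ast(t)\varphi-\varphi=\int_0^t S^\ast(s)\psi\dint s
\]
as a Gelfand-Pettis integral in $Y^\ast$, which exists because $s\mapsto S^\ast(s)\psi$ is weak-* continuous and locally bounded: testing against $\phi\in Y$ and using the chain rule along trajectories $\varphi(\Phi^t(x))-\varphi(x)=\int_0^t(f^\top\nabla\varphi)(\Phi^s(x))\dint s$ (valid for a.e.\ $x$), together with Fubini's theorem and the bounds of the first paragraph, rewrites the left-hand pairing as $\int_0^t\inner[Y,Y^\ast]{\phi}{S^\ast(s)\psi}\dint s$. Dividing by $t$ and letting $t\to0^+$ gives $t^{-1}(S^\ast(t)\varphi-\varphi)\rightharpoonup^\ast\psi$, hence $\varphi\in\mathcal{D}(A^\ast)$ with $A^\ast\varphi=f^\top\nabla\varphi$; weak-* density of $\mathcal{D}(A^\ast)$ then follows from \cref{lemma:w1inf_dense}.

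\emph{Main obstacle.} The crux is the weak-* continuity step: making $S(t)$ genuinely $C_0$ requires every weighted Jacobian factor $w(\Phi^{-t}(\cdot))^2/w(\cdot)^2$ and $|\det\mathrm{D}\Phi^{\pm t}|$ to stay uniformly bounded on compact time intervals, which is exactly what the estimates of \cref{assumption:comp_weight} and \cref{lemma:decay_bound} provide. A secondary technical point is the chain rule along the flow for a merely Sobolev $\varphi$, which needs a brief approximation-and-Fubini argument discarding the Lebesgue-null set of trajectories meeting the non-differentiability set of $\varphi$; and, since only $w^{-1}\in W^{1,\infty}(\Omega)$ is assumed (so $w$ itself may be unbounded), several of the integrability estimates have to be routed through the weighted norms rather than through $w$ directly.
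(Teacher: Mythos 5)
Your argument is correct in substance, but it reaches the conclusion by a genuinely different route than the paper. The paper works entirely on $Y^\ast$: it first shows $\norm[Y^\ast]{S^\ast(t)\varphi-\varphi}\to 0$ for Lipschitz $\varphi\in W^{1,\infty}(\Omega)$ via the mean value theorem and \cref{lemma:decay_bound}, then passes to general $\varphi\in Y^\ast$ using the weak-* density from \cref{lemma:w1inf_dense} together with the uniform bound on $\norm{S^\ast(t)}$, and finally reads off the generator by formally differentiating $\varphi(\Phi^t(\cdot))$ along trajectories. You instead build the pre-adjoint (weighted Perron--Frobenius) semigroup $S(t)$ on $Y$ explicitly --- something the paper only does afterwards, in \cref{lemma:lyapunov:adjoint_semigroup} --- prove it is a $C_0$-semigroup there, and invoke the standard adjoint-semigroup theorem to get weak-* continuity of $S^\ast(t)=S(t)^\ast$ and the identification of the weak-* generator as the adjoint of the generator of $S(t)$; the action on the stated subspaces is then verified through the integrated Gelfand--Pettis identity rather than by pointwise differentiation. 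Both routes work, and yours buys a cleaner density step: the paper's final $3\varepsilon$ estimate replaces $\inner[Y,Y^\ast]{S(t)\rho}{\varphi_n-\varphi}$ by $\norm{S^\ast(t)}\,|\inner[Y,Y^\ast]{\rho}{\varphi_n-\varphi}|$, which is not a legitimate operator-norm bound as written and really requires uniformity in $t$, whereas the adjoint-semigroup argument sidesteps this; your treatment of the generator is likewise more careful than the paper's. The price is that you need \emph{norm}-density of $W^{1,\infty}(\Omega)\cap\Lpw{\infty}{w}{\Omega}$ in $Y$, which is not what \cref{lemma:w1inf_dense} literally states (that lemma gives weak-* density in $Y^\ast$); however, the same mollification $\phi_k=(\eta_{1/k}\ast E(\phi w))/w$ converges in $\Lp{1}{\Omega}$ and $\Lp{2}{\Omega}$ after multiplication by $w$, so this is a citation imprecision rather than a gap. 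Your boundedness estimates and your handling of the chain rule for merely Sobolev $\varphi$ (discarding the null set of exceptional trajectories via Fubini) supply exactly the technical points the paper leaves implicit.
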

\begin{proof}
	First we have to show that $S^\ast(t) \in \mathcal{L}(Y^\ast)$. We begin with the case $Y =H = \Lpw{2}{w^2}{\Omega}$.
	For this, consider the trajectory $(\Phi^t(z))_{t\ge0}$ as a mapping  
	\begin{align*}
	 \Phi\colon \mathbb R_+ \times \mathbb R^{d} \to \mathbb R^{d},\quad  \Phi(t,z)=\Phi^t(z).
	\end{align*}
	The dynamic of the system given by equation \eqref{eq:dyn_sys} then reads 
	\begin{align*}   
	 \frac{\partial }{\partial t}\Phi(t,z) = f(\Phi(t,z)).
	\end{align*}
	From \cite{Gron19}, we know that $J(t):=\mathrm{D}\Phi^t(z)$ exists and solves the linear ordinary differential equation
	\begin{align*}
		\left\{ \begin{array}{rcl}
			\frac{\mathrm{d}}{\mathrm{d}t} J(t) & = & \underbrace{ \left( \mathrm{D}f (\Phi^t(z))\right) }_{ =: A(t)} J(t) \qquad \text{for} \; t > 0\\
			J(0) & = & I
		\end{array}\right.
	\end{align*}
	and is thus given by $J(t) = \exp \left( \int_0^t A(s) \dint s \right).$ The properties of the matrix exponential imply
	$$
	  \det J(t) = \exp \left( \tr \left(  \int_0^t A(s) \dint s \right) \right).
	$$
	Since the system was assumed to be \emph{flow-invariant} it holds $\Phi^t( \bar{\Omega} )\subseteq \bar{\Omega}$. Furthermore, $f_i \in \C{1}{\bar{\Omega}}$ by  \Cref{assumption:comp_weight} and, hence, $\mathrm{D}f\in \C{0}{\bar{\Omega}}$, so that we conclude that $A(t)$ is bounded. As a consequence, there exist $\alpha(t)$ and $\beta(t)$ such that
	\begin{equation*}
		0 <  \alpha(t) \le \det \mathrm{D}\Phi^t(z)  \le \beta(t) < \infty.
		\label{eq:boundnessDx}
	\end{equation*}
	From \cite[Corollary 2.1.2]{SinM93}, for the norm of the composition operator $S^\ast(t)$ we find 
	\[
		\| S^\ast(t) \|_{\mathcal{L}( \Lpw{2}{w^2}{\Omega}) } = \| \rho_{\Phi^t(\cdot)} \|^{1/2}_{\Lp{\infty}{\Omega}},
	\]
	where $\rho_{\Phi^t(\cdot)}$ denotes the Radon-Nikod\'{y}m derivative.
	Similar to equation \eqref{eq:radon_nikodym_transformation}, $\rho_{\Phi^t(\cdot)}$ is determined by a change of variables \cite[Theorem 7.26]{Rud87} such that with \cref{lemma:decay_bound}, we obtain the bound
	\begin{align*}
		\| \rho_{\Phi^t(\cdot)} \|_{\Lp{\infty}{\Omega}} &= \esssup_{z \in \Phi^{-t}(\Omega)} \left| \frac{w^2(\Phi^{-t}(z))}{\det \mathrm{D} \Phi^t(z)  w^2(z)} \right|  =  \esssup_{z \in \Omega} \left| \det \mathrm{D} \Phi^t( \Phi^{t}(z))^{-1} \frac{w^2(z)}{w^2(\Phi^t(z))} \right| \\
		&\le \esssup_{z\in \Omega} \left| \det \mathrm{D} \Phi^t(z)^{-1} \right| \exp(2 \omega_0 t ) < \infty.
	\end{align*}
	Therefore the composition operator $S^\ast(t)$ is well-defined and bounded on $\Lpw{2}{w^2}{\Omega}$.
	Next, let us consider $Y =X= \Lpw{1}{w}{\Omega}$. Then $Y^\ast = X^\ast = \Lpw{\infty}{w}{\Omega}$ and we obtain that
	\begin{align*}
		\| S^*(t) \varphi \|_{\Lpw{\infty}{w}{\Omega}} &= \underset{z \in \Omega} { \esssup } \; | \varphi(\Phi^t(z)) w(z) | =  \underset{z \in \Omega} { \esssup } \left| \varphi(\Phi^t(z)) w( \Phi^t(z) ) \frac{w(z)}{w( \Phi^t(z) ) } \right| \\
		& \le  \| \phi \|_{\Lpw{\infty}{w}{\Omega}}\exp( t \omega_0 ) < \infty,
	\end{align*}
	where the inequality again follows from \cref{lemma:decay_bound}. Consequently, $S^\ast(t) \in \mathcal{L}(Y^\ast)$ for $Y \in\{H,X\}.$ Let us show that $S^\ast(t)$ is a weak-* continuous semigroup. Due to the time invariance of \eqref{eq:dyn_sys}, it immediately follows that $\Phi^t(\Phi^s(z)) = \Phi^{t+s}(z)$ and, consequently, $S^\ast(t+s) = S^\ast(t)S^\ast(s).$
	It remains to show weak-* continuity of $S^*(t).$	
	Let us consider $\varphi \in W^{1,\infty}( \Omega )$ first. 
	Then $\varphi$ is Lipschitz continuous and by the multidimensional mean value theorem \cite[Theorem 5.19]{Rud76} for some $\xi \in (0,t)$ it holds
	\begin{align*}
		& | \varphi( \Phi^t(z)) - \varphi ( z ) | w(z)  \le  L \| \Phi^t(z) - \Phi^0(z) \| w(z)
		 \le Lt \left\| \tfrac{\mathrm{d}}{\mathrm{d}t} \Phi^\xi(z) \right\| w(z) \\
		&=  L t \| f(  \Phi^\xi(z) ) \| w(z) 
		 \le   L t \| f( \Phi^\xi(z) ) w(\Phi^\xi(z))\|  \frac{w(z)}{w(\Phi^\xi(z))}
		\le L t \exp( \omega_0 t ) \|  f  \|_{\Lpw{\infty}{w}{\Omega}}.
	\end{align*}
	This obviously implies that $
	\lim\limits_{t \rightarrow 0} \| S^*(t) \varphi - \varphi \|_{Y^\ast} = 0
$	for $Y^\ast = X^*=\Lpw{\infty}{w}{\Omega}$. The case $Y =H = \Lpw{2}{w^2}{\Omega}$ follows similarly. By  \cref{lemma:w1inf_dense} we know that for $\varphi \in Y$ there exists $\varphi_n \in W^{1,\infty}(\Omega)$ such that $ 
	\varphi_n \overset{\ast}{\rightharpoonup} \varphi.$
	For arbitrary $\rho \in Y$ and $n \in \N$, consider
	\begin{align*}
		& \lim_{t \rightarrow 0} | \inner[Y,Y^\ast]{ \rho }{  S^\ast(t) \varphi - \varphi } | \\
		&\quad =   \lim_{t \rightarrow 0} | \inner[Y,Y^\ast]{ \rho }{  S^\ast(t) \varphi - \varphi  - ( S^\ast(t) \varphi_n - \varphi_n ) + (S^\ast(t) \varphi_n - \varphi_n) } |\\
		&\quad \le  \lim_{t \rightarrow 0} \left( \| \rho \|_Y \| S^\ast(t) \varphi_n - \varphi_n \|_{Y^\ast} + \left( 1 + \| S^\ast(t) \| \right) | \inner[Y,Y^\ast]{ \rho }{  \varphi_n - \varphi } | \right) \\
		&\quad \le   2 | \inner[Y,Y^\ast]{ \rho }{  \varphi_n - \varphi } |
	\end{align*}
	which proves that $S^\ast(t)$ is weak-* continuous. For the generator note that
	\[
		A^\ast \varphi = \lim_{t \rightarrow 0} \frac{S^\ast(t) \varphi - \varphi}{t} = \lim_{t \rightarrow 0} \frac{\varphi(\Phi^t(\cdot)) - \varphi}{t} = \frac{\mathrm{d}}{\mathrm{d}t} \varphi(\Phi^t(\cdot)) = f^\top \nabla \varphi
	\]
	where $\varphi \in \mathcal{D}(A^\ast)$ if and only if $f^\top \nabla \varphi \in Y^\ast$ exists in a weak sense. In the case $Y = X$ it is sufficient if $\varphi \in W^{1,\infty}(\Omega)$, since $f_i \in \Lpw{\infty}{w}{\Omega}$ by assumption. For $Y = H$ we  need $\varphi \in W^{1,2}(\Omega)$ instead.

\end{proof}
The result from \cite[Theorem 1.6]{EngN06} immediately yields that any weakly continuous semigroup is also strongly continuous. With the previous \Cref {theorem:lyapunov:semigroup_property} this means that $S(t)$ and in the case $Y = H = \Lpw{2}{w^2}{\Omega}$ also $S^\ast(t)$ defines a strongly continuous semigroup.

\begin{lemma}
	\label{lemma:lyapunov:adjoint_semigroup}
	If $ Y \in \{ X, H\}$ the preadjoint $S(t) \colon Y \to Y$ of the composition semigroup $S^\ast(t)$ 
	is given by
	\begin{align*}   	 
	( S(t) \rho )(a) &= \left\{ \begin{array}{ll}
		\mu(a) \rho(\Phi^{-t}(a)) \quad & \text{if } a \in \left( \Phi^t(\Omega) \right)^\circ, \\
		0 \qquad & \text{else}
	\end{array}\right. \\
	\mu(a) &:= \left| \det( \mathrm{D}\Phi^t(a)) \right|^{-1} \frac{w^2\left(\Phi^{-t}(a) \right) }{w^2(a)} \ge 0.
	\end{align*}
\end{lemma}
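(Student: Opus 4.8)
The plan is to construct the preadjoint explicitly rather than to invoke a general weak-$*$ continuity argument: I take the operator $S(t)$ \emph{defined} on $Y$ by the formula in the statement and establish two facts, that $S(t) \in \mathcal{L}(Y)$ and that
\[
\inner[Y,Y^\ast]{S(t)\rho}{\varphi} = \inner[Y,Y^\ast]{\rho}{S^\ast(t)\varphi} \qquad \text{for all } \rho \in Y,\ \varphi \in Y^\ast .
\]
Granting both, $S^\ast(t)$ is \emph{by definition} the Banach-space adjoint of the bounded operator $S(t)$, so $S(t)$ is a preadjoint, and it is the unique one because $Y^\ast$ separates the points of $Y$; this is precisely the assertion. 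By the lemma on the dual pairing, for both $Y = X = \Lpw{1}{w}{\Omega}$ and $Y = H = \Lpw{2}{w^2}{\Omega}$ the pairing is the single expression $\inner[Y,Y^\ast]{\phi}{\varphi} = \int_\Omega \phi(x)\varphi(x) w^2(x) \dint x$, so the two cases are treated at once.

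The engine is the substitution $a = \Phi^t(x)$. Because $f \in \C{1}{\bar{\Omega};\R^{d}}$ and $\Omega$ is flow-invariant with $\Phi^t$ defined for all $t \ge 0$, the map $\Phi^t$ is an injective $\mathcal{C}^1$ map on $\Omega$ (injectivity from $\Phi^{-t} \circ \Phi^t = \mathrm{id}$) with everywhere-invertible Jacobian (already used in the proof of \cref{theorem:lyapunov:semigroup_property}), hence a $\mathcal{C}^1$-diffeomorphism of $\Omega$ onto the open set $\Phi^t(\Omega)$, on which $\Phi^{-t}$ is again $\mathcal{C}^1$; differentiating $\Phi^{-t} \circ \Phi^t = \mathrm{id}$ yields $\mathrm{D}\Phi^{-t}(\Phi^t(x)) = (\mathrm{D}\Phi^t(x))^{-1}$, hence $|\det \mathrm{D}\Phi^{-t}(\Phi^t(x))| \, |\det \mathrm{D}\Phi^t(x)| = 1$. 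Splitting $\Omega$ into $\Phi^t(\Omega)$ and its complement — on the latter $S(t)\rho$ is set to zero and contributes nothing — and applying the change-of-variables theorem \cite[Theorem 7.26]{Rud87} to $\inner[Y,Y^\ast]{S(t)\rho}{\varphi} = \int_{\Phi^t(\Omega)} (S(t)\rho)(a)\,\varphi(a)\, w^2(a)\dint a$, the $w^2(a)$ in the denominator of $\mu$ cancels the weight from the pairing while the Jacobian of $\Phi^{-t}$ cancels the one produced by the substitution, leaving $\int_\Omega \rho(x)\,\varphi(\Phi^t(x))\, w^2(x)\dint x = \inner[Y,Y^\ast]{\rho}{S^\ast(t)\varphi}$. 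Read in reverse, the same computation shows that the stated formula for $S(t)\rho$ is the only one for which this can hold. For the substitutions to be licit it suffices that the integrands lie in $\Lp{1}{\Omega}$, which follows once $S(t)\rho \in Y$ is known, since then $(S(t)\rho)\,\varphi\, w^2 \in \Lp{1}{\Omega}$ by Hölder.

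Boundedness comes from the same substitution applied to $\norm[Y]{S(t)\rho}$. For $Y = X$ both Jacobians cancel cleanly,
\[
\norm[X]{S(t)\rho} = \int_\Omega \frac{w(x)}{w(\Phi^t(x))}\, w(x)\, |\rho(x)| \dint x \;\le\; e^{t\omega_0}\, \norm[X]{\rho},
\]
by \cref{lemma:decay_bound}. For $Y = H$ one factor survives,
\[
\norm[H]{S(t)\rho}^2 = \int_\Omega |\det \mathrm{D}\Phi^t(x)|^{-1}\, \frac{w^2(x)}{w^2(\Phi^t(x))}\, w^2(x)\, |\rho(x)|^2 \dint x \;\le\; \frac{e^{2t\omega_0}}{\alpha(t)}\, \norm[H]{\rho}^2,
\]
using \cref{lemma:decay_bound} together with the two-sided bound $0 < \alpha(t) \le \det \mathrm{D}\Phi^t(z) \le \beta(t) < \infty$ from the proof of \cref{theorem:lyapunov:semigroup_property}. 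Both estimates are consistent with $\norm[\mathcal{L}(Y)]{S(t)} = \norm[\mathcal{L}(Y^\ast)]{S^\ast(t)}$, the latter computed there.

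I expect the only genuine difficulty to be the bookkeeping of domains in the change of variables: $\Phi^t$ is a diffeomorphism only \emph{onto} the proper subset $\Phi^t(\Omega) \subsetneq \bar{\Omega}$ — flow-invariance shrinks the image — so one has to restrict, zero-extend, and check that the complement contributes nothing, which is already built into the definition of $S(t)\rho$ together with openness of $\Phi^t(\Omega)$. The $Y = H$ boundedness is marginally less immediate than the $Y = X$ one because of the leftover factor $|\det \mathrm{D}\Phi^t|^{-1}$, which has to be controlled through the Jacobian bounds of \cref{theorem:lyapunov:semigroup_property}; everything else is a routine calculation.
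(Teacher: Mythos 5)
Your proof is correct and follows essentially the same route as the paper's: the core of both arguments is the change of variables $a = \Phi^t(z)$ applied to $\inner[Y,Y^\ast]{\rho}{S^\ast(t)\varphi}$, using injectivity of the flow (from uniqueness of trajectories) to make $\Phi^{-t}$ well-defined on $\Phi^t(\Omega)$. The paper only records this duality computation (for $Y=X$), whereas you additionally verify $S(t)\in\mathcal{L}(Y)$ explicitly in both cases and note uniqueness of the preadjoint; these are harmless supplements rather than a different method.
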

\begin{proof}
	By \cref{assumption:comp_weight} we know that $f_i \in \C{1}{\bar{\Omega}}$ and therefore $f_i$ is Lipschitz continuous. Since the tangent condition is also fulfilled the trajectories are uniquely determined \cite[Chapter III, Theorem XVIII (b) ]{Wal98}. Hence $\Phi^t \colon \bar{\Omega} \to \Phi^t(\bar{\Omega})$ is bijective and to any $a\in\Phi^t(\bar{\Omega}) \subseteq \bar{\Omega}$ we obtain a unique $\left( \Phi^{t} \right)^{-1}(a) = \Phi^{-t}(a) \in \bar{\Omega}$. 
	Now let $\rho \in X$ and $\varphi \in X^\ast$ then we can use a change of variables $ z := \Phi^{-t}(a)$ (\cite[Theorem 7.26]{Rud87}) to compute
	\begin{align*}
	\inner[X,X^\ast]{\rho}{S^{\ast}\varphi} 
		= & \int_\Omega \rho(z) \varphi(\Phi^t(z)) \; w^2(z) \dint z \\
		= & \int_{\left( \Phi^t(\Omega) \right)^\circ } \left| \det( \mathrm{D}\Phi^t(a)) \right|^{-1} \rho(\Phi^{-t}(a)) \varphi(a ) \; w^2(\Phi^{-t}(a)) \dint a \\
		= & \int_{\left(  \Phi^{t}(\Omega)\right)^\circ } \mu(a) \rho(\Phi^{-t}(a)) \varphi(a ) \; w^2(a) \dint a =  \inner[X,X^\ast]{ S(t) \rho }{\varphi}.
	\end{align*}
\end{proof}

\begin{proposition}
	\label{proposition:lyapunov:adjoint_generator}
	The generator $A \colon D(A) \subseteq \Lpw{1}{w}{\Omega} \to \Lpw{1}{w}{\Omega}$ of the semigroup $S(t)$ is given by
	\begin{equation*}
		\begin{array}{rccl}
			A \colon & D(A) \subseteq X & \to & X,\\
			& \phi & \mapsto & - \divergence \left( f \; \phi \right) - 2 \frac{ f(x)^\top \nabla w}{w} \phi
		\end{array}
	\end{equation*}
	It holds that
	\[
 	D := \{ \phi \colon \Omega \to \R \; \big| \; \| \phi \|_{\Lpw{1}{w}{\Omega}} + \sum_{ | \alpha | = 1 } \| \mathrm{D}^{(\alpha)} \phi \|_{\Lp{1}{\Omega}} < \infty \; \text{and} \; \phi  \,\rule[-3mm]{0.1mm}{4mm}_{I} \equiv 0 \} \subseteq \mathcal{D}(A) \subseteq \Lpw{1}{w}{\Omega}
	\]
	
	where $I := \{ x \in \partial \Omega \; \big| \; f(x)^\top \nu(x) \neq 0
	 \}$.
\end{proposition}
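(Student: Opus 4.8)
The plan is to compute the generator of $S(t)$ directly as a dual/preadjoint statement. By \cref{lemma:lyapunov:adjoint_semigroup} we already have an explicit formula for $S(t)$ on $X = \Lpw{1}{w}{\Omega}$, and by \cref{theorem:lyapunov:semigroup_property} the semigroup $S^\ast(t)$ on $X^\ast$ has generator $A^\ast\varphi = f^\top\nabla\varphi$. Since $S^\ast(t)$ is the weak-* continuous dual semigroup of the strongly continuous semigroup $S(t)$ on the (non-reflexive) space $X$, the generator $A$ of $S(t)$ is the preadjoint of $A^\ast$ in the sense that $\inner[X,X^\ast]{A\phi}{\varphi} = \inner[X,X^\ast]{\phi}{A^\ast\varphi}$ for $\phi\in\mathcal D(A)$ and $\varphi\in\mathcal D(A^\ast)$. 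So the first step is to take a smooth test function $\varphi$ (say $\varphi\in W^{1,\infty}(\Omega)\cap\Lpw{\infty}{w}{\Omega}$, which lies in $\mathcal D(A^\ast)$ by \cref{theorem:lyapunov:semigroup_property}) and an admissible $\phi\in D$, and integrate by parts in the pairing
\[
\inner[X,X^\ast]{\phi}{A^\ast\varphi} = \int_\Omega \phi\,(f^\top\nabla\varphi)\,w^2\dint x.
\]

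The second step is the integration by parts itself: moving the derivative off $\varphi$ produces $-\int_\Omega \divergence(f\,\phi\,w^2)\,\varphi\dint x$ plus a boundary term $\int_{\partial\Omega}\phi\,w^2\,\varphi\,(f^\top\nu)\dint\sigma$. The boundary term is the reason for the constraint $\phi|_I\equiv 0$: on $\partial\Omega\setminus I$ we have $f^\top\nu = 0$ by definition of $I$, and on $I$ the factor $\phi$ vanishes, so the boundary integral is zero for every admissible $\phi$ (the regularity $\phi\in W^{1,1}$ with the zero trace on $I$ is exactly what is encoded in the definition of $D$, and makes the trace and Green's formula legitimate). It then remains to expand $\divergence(f\,\phi\,w^2) = w^2\,\divergence(f\phi) + \phi\,f^\top\nabla(w^2) = w^2\big(\divergence(f\phi) + 2\tfrac{f^\top\nabla w}{w}\phi\big)$, so that
\[
\inner[X,X^\ast]{\phi}{A^\ast\varphi} = \int_\Omega \Big(-\divergence(f\phi) - 2\tfrac{f^\top\nabla w}{w}\phi\Big)\varphi\,w^2\dint x = \inner[X,X^\ast]{A\phi}{\varphi},
\]
which identifies the action of $A$ on $D$ with the claimed expression; note the right-hand side lies in $X = \Lpw{1}{w}{\Omega}$ because $\phi\in D$, $f_i\in\C1{\bar\Omega}$ and $\tfrac{1}{w}\in W^{1,\infty}(\Omega)$, so $D\subseteq\mathcal D(A)$ in the sense of the claim.

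The one genuine subtlety — and what I expect to be the main obstacle — is justifying that this formal adjoint identity actually pins down $A$ (i.e. that the differential operator coincides with the semigroup generator on $D$), rather than merely showing $D$ sits inside the domain of the adjoint of $A^\ast$. The clean way to handle this is to verify directly that $t\mapsto S(t)\phi$ is differentiable at $t=0$ in the norm of $X$ for $\phi\in D$ with derivative equal to the claimed expression: using the explicit formula for $S(t)$ from \cref{lemma:lyapunov:adjoint_semigroup}, differentiate $a\mapsto \mu(a)\phi(\Phi^{-t}(a))$ in $t$ (the $t$-derivative of $|\det \mathrm D\Phi^t|^{-1}$ via the Liouville/Jacobi formula gives the $-\divergence f$ contribution, the $t$-derivative of $w^2(\Phi^{-t}(a))/w^2(a)$ gives the $+2 f^\top\nabla w/w$ contribution through \cref{lemma:decay_bound}-type bounds, and the chain rule on $\phi\circ\Phi^{-t}$ gives $-f^\top\nabla\phi$; summing reproduces $-\divergence(f\phi) - 2(f^\top\nabla w/w)\phi$). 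The dominated-convergence argument needed to turn pointwise $t$-differentiability into $\Lpw{1}{w}{\Omega}$-differentiability uses the bounds $0<\alpha(t)\le\det\mathrm D\Phi^t\le\beta(t)$ and \cref{lemma:decay_bound}, exactly as in the proof of \cref{theorem:lyapunov:semigroup_property}. Closedness of $A$ then gives $D\subseteq\mathcal D(A)$, and the trivial inclusion $\mathcal D(A)\subseteq X$ completes the chain of inclusions stated in the proposition.
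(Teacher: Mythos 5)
Your core computation is exactly the paper's proof: take $\phi\in D$, $\psi\in\mathcal D(A^\ast)$, apply the divergence theorem to $\inner[X,X^\ast]{\phi}{A^\ast\psi}$, kill the boundary term because $\nu^\top f=0$ off $I$ and $\phi=0$ on $I$, and expand $\divergence(f\phi w^2)$ to read off the claimed expression for $A\phi$. Where you diverge is in closing the final step. The paper simply asserts that the duality identity makes $A$ the preadjoint of $A^\ast$ and cites \cite[Subsection 2.5]{EngN06} (the sun-dual fact that the generator of the original semigroup is the part of $(A^\ast)^\ast$ in $X$, so the identity $\inner[X,X^\ast]{g}{\psi}=\inner[X,X^\ast]{\phi}{A^\ast\psi}$ for all $\psi\in\mathcal D(A^\ast)$ with $g\in X$ already forces $\phi\in\mathcal D(A)$ and $A\phi=g$); you are right that this is the genuine subtlety, and your alternative of differentiating $t\mapsto S(t)\phi$ at $t=0$ via the explicit formula of \cref{lemma:lyapunov:adjoint_semigroup} is a legitimate, more self-contained route. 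Two caveats if you carry that route out: first, the condition $\phi|_I\equiv 0$ re-enters there not through a boundary integral but through the shrinking support $(\Phi^t(\Omega))^\circ\subsetneq\Omega$ --- the difference quotient contributes $-\phi/t$ on the collar $\Omega\setminus\Phi^t(\Omega)$, whose measure is of order $t$ precisely near $I$, so you need the vanishing trace (plus $W^{1,1}$ regularity) to make that term go to zero in $\Lpw{1}{w}{\Omega}$; you attribute the role of $\phi|_I\equiv 0$ only to the integration-by-parts boundary term, so this point is glossed over. Second, the $t$-derivative of $w^2(\Phi^{-t}(a))/w^2(a)$ at $t=0$ is $-2\,f^\top\nabla w/w$, not $+2$, as your final sum correctly requires; also, once norm differentiability at $t=0$ is established you have $\phi\in\mathcal D(A)$ by definition of the generator, so no appeal to closedness of $A$ is needed. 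Neither issue affects the validity of the main (paper-identical) argument.
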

\begin{proof}
	Let $\phi \in D$ and $\psi \in \mathcal{D}(A^\ast) \subseteq \Lpw{\infty}{w}{\Omega}$. The divergence theorem yields
	\begin{align*}
		& \inner[X, X^\ast]{\phi}{A ^\ast \psi} 
		=  \int_{\Omega} \phi(x) f(x)^\top \nabla \psi(x) w^2(x) \dint x\\
		= & \int_{\partial \Omega} \underbrace{ \nu(x)^\top f(x)  \phi(x) }_{=0} \psi(x)  w^2(x)  \dint x 
		- \int_{\Omega} f(x)^\top \nabla \phi(x) \psi(x)  w^2(x) \dint x \\
		& \quad - \int_{\Omega} \frac{f(x)^\top \nabla w^2(x) }{w^2(x)} \phi(x) \psi(x) w^2(x) \dint x
		- \int_{\Omega} \divergence f(x) \phi(x) \psi(x) w^2(x) \dint x \\ 
		= & \inner[X,X^\ast]{ A \phi}{\psi }.
	\end{align*}
	 This implies that $A^\ast$ is the adjoint of $A$ and consequently $A$ is the preadjoint of $A^\ast$. By the statement given in \cite[Subsection 2.5]{EngN06} it follows that $A$ is the generator of $S(t)$.
\end{proof}
\begin{remark}
	Note that $A$ is the sum of a first order differential operator and a multiplication operator.
\end{remark}
For the computation of \eqref{eq:dyn_sys}, the asymptotic behavior of the semigroup for $t\to \infty$ is crucial. As it turns out, on the weighted space $\Lpw{p}{w}{\Omega}$, we obtain a simple condition for exponential stability.
\begin{theorem}
	If $f$ from \eqref{eq:dyn_sys} and $w$ satisfy
	\begin{equation}
			\esssup_{x \in \Omega} \; - \frac{ f(x)^\top \nabla{w(x)} }{w(x)} = \omega_0 < 0,
		\label{eq:l1_dissipative}
	\end{equation}
	then $S(t)$ is an exponentially stable semigroup of contractions of type $\omega_0$ over $X$.
	\label{thm:exp_stab}
\end{theorem}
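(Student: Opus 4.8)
The plan is to obtain the contraction estimate $\|S(t)\|_{\mathcal{L}(X)} \le e^{\omega_0 t}$ directly from the weight-decay bound of \cref{lemma:decay_bound} — which under \eqref{eq:l1_dissipative} now carries a negative exponent — together with the duality between $S(t)$ and $S^\ast(t)$ provided by \cref{lemma:lyapunov:adjoint_semigroup} and \cref{proposition:lyapunov:adjoint_generator}. Once that bound is in hand everything else is immediate: $\omega_0 < 0$ makes $e^{\omega_0 t} \le 1$ for all $t \ge 0$ (contractions) and $e^{\omega_0 t} \to 0$ as $t \to \infty$, so the growth bound of the semigroup is at most $\omega_0$, i.e.\ it is exponentially stable of type $\omega_0$. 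That $S(t)$ is already a strongly continuous semigroup on $X$ is known from \cref{theorem:lyapunov:semigroup_property} and the remark following it, so only the norm estimate has to be checked.

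First I would argue by duality. In the proof of \cref{theorem:lyapunov:semigroup_property}, the case $Y = X$ yielded $\|S^\ast(t)\varphi\|_{\Lpw{\infty}{w}{\Omega}} \le e^{\omega_0 t}\|\varphi\|_{\Lpw{\infty}{w}{\Omega}}$ as a direct consequence of \cref{lemma:decay_bound}; this computation never used the sign of $\omega_0$, hence it remains valid and gives $\|S^\ast(t)\|_{\mathcal{L}(X^\ast)} \le e^{\omega_0 t}$. Since $S(t)$ is the preadjoint of $S^\ast(t)$, and for a bounded operator on a Banach space the operator norm equals the norm of its adjoint, I conclude $\|S(t)\|_{\mathcal{L}(X)} = \|S^\ast(t)\|_{\mathcal{L}(X^\ast)} \le e^{\omega_0 t}$, which is the assertion.

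As a cross-check one can also work entirely inside $X = \Lpw{1}{w}{\Omega}$: the constant function $\tfrac{1}{w}$ belongs to $X^\ast = \Lpw{\infty}{w}{\Omega}$ with $\|\tfrac{1}{w}\|_{\Lpw{\infty}{w}{\Omega}} = 1$, and for $\rho \ge 0$ the adjoint identity together with the explicit form of $S^\ast(t)$ gives $\|S(t)\rho\|_{\Lpw{1}{w}{\Omega}} = \inner[X,X^\ast]{S(t)\rho}{\tfrac{1}{w}} = \inner[X,X^\ast]{\rho}{S^\ast(t)\tfrac{1}{w}} = \int_\Omega \rho(z)\,\frac{w(z)}{w(\Phi^t(z))}\,w(z)\dint z \le e^{\omega_0 t}\|\rho\|_{\Lpw{1}{w}{\Omega}}$ by \cref{lemma:decay_bound}; for general $\rho$ one uses positivity of $S(t)$ (the kernel $\mu$ in \cref{lemma:lyapunov:adjoint_semigroup} is nonnegative), so $|S(t)\rho| \le S(t)|\rho|$. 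I do not expect a real obstacle here — the only points requiring care are the mismatch between the ``norm weight'' $w$ of $\Lpw{1}{w}{\Omega}$ and the ``pairing weight'' $w^2$, and the fact that $\Phi^t(\Omega)$ may be a proper subset of $\Omega$, but the latter is already built into the formula for $S(t)$, and all the analytic substance is contained in \cref{lemma:decay_bound}.
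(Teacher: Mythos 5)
Your proposal is correct, but the main argument you give is a genuinely different route from the paper's. The paper works entirely on the predual side: using positivity of the kernel $\mu$ it writes $\| S(t)\phi \|_{X} = \inner[X,X^\ast]{|\phi|}{S^\ast(t)\tfrac{1}{w}}$, verifies $w^{-1}\in\mathcal{D}(A^\ast)$, differentiates this pairing in $t$ to obtain $\tfrac{\mathrm{d}}{\mathrm{d}t}\|S(t)\phi\|_X \le \omega_0\|S(t)\phi\|_X$ from the sign condition \eqref{eq:l1_dissipative}, and closes with Gronwall. Your primary argument instead observes that the estimate $\|S^\ast(t)\varphi\|_{X^\ast}\le e^{\omega_0 t}\|\varphi\|_{X^\ast}$ is already contained in the proof of \cref{theorem:lyapunov:semigroup_property} (and is sign-independent), and transfers it to $X$ via the norm identity $\|S(t)\|_{\mathcal{L}(X)}=\|S^\ast(t)\|_{\mathcal{L}(X^\ast)}$, which is legitimate since \cref{lemma:lyapunov:adjoint_semigroup} establishes precisely that $S^\ast(t)$ is the Banach-space adjoint of $S(t)$. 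This is shorter and avoids both the domain check for $w^{-1}$ and the differentiation-under-the-pairing step; what the paper's version buys in exchange is an explicit dissipation inequality for the generator on the $X$ side, which makes the role of condition \eqref{eq:l1_dissipative} as a Lyapunov-type decay condition transparent. Your ``cross-check'' is essentially the paper's proof with the Gronwall step collapsed: rather than differentiating $\inner[X,X^\ast]{|\phi|}{S^\ast(t)\tfrac{1}{w}}$, you bound $S^\ast(t)\tfrac{1}{w} = \tfrac{1}{w\circ\Phi^t}\le e^{\omega_0 t}\tfrac{1}{w}$ directly from \cref{lemma:decay_bound}, which is the integrated form of the same estimate. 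Both of your variants correctly identify that all the analytic content sits in \cref{lemma:decay_bound} together with positivity of $S(t)$.
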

\begin{proof}
	Since $\mu(a) \ge 0,$ with the explicit expression for $S(t)$ from  \cref{lemma:lyapunov:adjoint_semigroup}, we conclude that
	\begin{align*}
		| ( S(t) \phi )(a) | =& \left\{ \begin{array}{lcc}
			\mu(a) | \phi( \Phi^{-t}(a))	| & \qquad & \text{for} \; a \in \Phi^t(\Omega)\\
			0 & \qquad & \text{else}
		\end{array}\right.\\
		= & \; ( S(t) | \phi | )(a)
	\end{align*}
	for $a \in \Omega$. Consequently, this yields
	\begin{align}
		\| S(t) \phi \|_{\Lpw{1}{w}{\Omega}} & = \int_{\Omega} \left( S(t) | \phi | \right)(x) \;  w(x) \dint x \\
		& = \int_{\Omega} \left( S(t) | \phi | \right)(x) \underbrace{\frac{1}{w(x)}}_{ \in \Lpw{\infty}{w}{\Omega}} w^2(x) \dint x 
		 = \inner[X,X^\ast]{ | \phi | }{ S^\ast(t) \frac{1}{w(x)}}.
		 \label{eq:st_l1w}
	\end{align}
 Note that $w^{-1} \in \mathcal{D}(A^\ast)$ since
 \begin{equation}\label{eq:gen}
\begin{aligned}
\big \| \; A^\ast w^{-1} \; \big\|_{\Lpw{\infty}{w}{\Omega}} = & \esssup_{ x \in \Omega } \left| w(x) f(x)^\top \nabla \left( \frac{1}{w(x)} \right) \right| 
\le  \| f \|_{\Lpw{\infty}{w}{\Omega}} \big  \| w^{-1}(x) \big  \|_{W^{1,\infty}(\Omega)}.	
\end{aligned}
\end{equation}
Combining \eqref{eq:st_l1w} and \eqref{eq:gen} we conclude
	\begin{align*}
		&\frac{\mathrm{d}}{\mathrm{d}t} \| S(t) \phi \|_{\Lpw{1}{w}{\Omega}}  = \inner[X,X^\ast]{|\phi|}{S^\ast(t) A^\ast \frac{1}{w(x)}}
		= \inner[X,X^\ast]{S(t) | \phi | }{ - \frac{f(x)^\top \nabla w(x) }{w(x)^2}}\\
		&\quad = \int_\Omega \left( S(t) |\phi| \right)(x) \frac{-f(x)^\top \nabla w(x) }{w(x)} w(x) \dint x 
		 \le \omega_0 \int_\Omega \left( S(t) |\phi| \right) (x) w(x) \dint x \\
		 &\quad  = \omega_0 \| S(t) \phi \|_{\Lpw{1}{w}{\Omega}}.
	\end{align*}
	Gronwall's lemma implies $ 
	\| S(t) \phi \|_{\Lpw{1}{w}{\Omega}} \le \exp\left( \omega_0 t \right) \| \phi \|_{\Lpw{1}{w}{\Omega}}.$
\end{proof}
\begin{remark}
	Let us emphasize that the semigroup is not necessarily exponentially stable over $H$. For example consider, $f(x) := -x$ in $\Omega := B_1(0) \subseteq \R^{d}$ and $w(x) := \frac{1}{\|x\|}$. With regard to the assumptions of \cref{thm:exp_stab}, note that
	\[
	\esssup_{ x \in \Omega } - \frac{f(x)^\top \nabla w(x)}{w(x)} = \esssup_{ x \in \Omega } - \frac{x^\top x }{\|x\|^2 } = -1 < 0
	\]
	i.e., the lemma is applicable. However, for $\beta  > - \frac{{d}-2}{2}$ we can define functions
	\[
		u_\beta(x) := \|x\|^{\beta}
	\]
	which are elements of $H$. Indeed, observe that in spherical coordinates we have
	\begin{align*}
		\| u_\beta \|^2_{\Lpw{2}{w^2}{\Omega}} = \int_\Omega \| x \|^{2(\beta - 1)} \dint x = | S_1(0) | \int_0^1 r^{{d}-1+2(\beta - 1)} \dint r < \infty.
	\end{align*}
	However, for $x \in \Omega$ it also holds that
	\[
		(A^\ast u_\beta)(x) = -x^\top \nabla (\|x\|^\beta)  = - \beta 	x^\top x\|x\|^{\beta - 2} = - \beta u_\beta(x).
	\]
	This means that for ${d} \ge 3$ it follows that $u_\beta \in H$ for $\beta > -\frac{1}{2}$. Consequently, $\sigma( A^\ast ) \not \subseteq \mathbb{C}_{-}$ and since $\sigma( A ) = \overline{\sigma( A^\ast)}$ the semigroup cannot be exponentially stable over $H$.
\end{remark}
Many physical systems can be modeled with port-Hamiltonian systems for which we have a more specific characterization. 
\begin{proposition}
	\label{prop:port_hamiltionian}
	Suppose that $f(x)$ in \eqref{eq:dyn_sys} corresponds to a port-Hamiltonian system, i.e.,
	\[
	f(x) = ( J(x) - R(x) ) \nabla H(x) \qquad \forall x \in \Omega
	\]
	where
	\begin{enumerate}[label=(\roman*)]
		\item $H \colon \Omega \to \R_+$ is two times continuously differentiable with 
		\[ \| \nabla H(x) H^{-1/2}(x) \|_{\Lp{\infty}{\Omega} } < \infty. \]
		\item $R \colon \Omega \mapsto \R^{d \times d}$ is continuously differentiable and symmetric and positive semidefinite for all $x\in\Omega$.
		\item $J \colon \Omega \mapsto \R^{d \times d}$ is continuously differentiable and skew-symmetric for all $x\in\Omega$.
		\item The tangent condition $\inner{f(x)}{\nu(x)} \le 0$ for all $x \in \partial \Omega$ is fulfilled.
	\end{enumerate}
	If, in addition it holds that
	\begin{equation}
		\esssup_{ x \in \Omega } \left\{ - \frac{\nabla H(x)^\top R(x) \nabla H(x)}{ 2 H(x)}\right\} = \omega_0 < 0 \qquad \forall x \in \Omega
		\label{eq:pHsystem_stable}
	\end{equation}
	then $S(t)$ is an exponentially stable semigroup of contractions over the space $\Lpw{\infty}{w}{\Omega}$ of type $\omega_0$ with regard to the weighting $w(x) := H^{-1/2}(x)$. 
\end{proposition}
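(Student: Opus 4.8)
The plan is to verify that the weight $w := H^{-1/2}$ meets the standing condition \eqref{eq:weighting} together with \cref{assumption:comp_weight}, and that the dissipativity condition \eqref{eq:l1_dissipative} of \cref{thm:exp_stab} holds with exactly the constant $\omega_0$ from \eqref{eq:pHsystem_stable}. The claim then follows from \cref{thm:exp_stab}, which gives an exponentially stable contraction semigroup $S(t)$ of type $\omega_0$ on $X = \Lpw{1}{w}{\Omega}$; by the adjoint identity $\norm[\mathcal{L}(X^\ast)]{S^\ast(t)} = \norm[\mathcal{L}(X)]{S(t)}$ (equivalently, by the bound $\norm[\Lpw{\infty}{w}{\Omega}]{S^\ast(t)\varphi} \le \exp(t\omega_0)\norm[\Lpw{\infty}{w}{\Omega}]{\varphi}$ already obtained inside the proof of \cref{theorem:lyapunov:semigroup_property}) the composition semigroup on $\Lpw{\infty}{w}{\Omega}$ is then likewise a contraction semigroup of type $\omega_0$.

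First I would check admissibility of the weight. Since $H$ is positive and of class $\C{2}{\bar\Omega}$ (in particular bounded), $w = H^{-1/2}$ is measurable and positive, and $w^{-1} = H^{1/2}$ satisfies $\nabla(H^{1/2}) = \tfrac12\,(\nabla H\, H^{-1/2}) \in \Lp{\infty}{\Omega}$ by hypothesis (i); hence $w^{-1} \in W^{1,\infty}(\Omega)$ and \eqref{eq:weighting} holds. For \cref{assumption:comp_weight}(i), $f \in \C{1}{\R^{d};\R^{d}}$ is assumed throughout, so $f_i \in \C{1}{\bar\Omega}$; moreover, componentwise $f_i w = [(J-R)(\nabla H\, H^{-1/2})]_i$, and since $J$ and $R$ are continuous (hence bounded on the bounded set $\Omega$) while $\nabla H\, H^{-1/2} \in \Lp{\infty}{\Omega}$ by (i), we conclude $f_i \in \Lpw{\infty}{w}{\Omega}$.

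The heart of the argument is the computation of $-f^\top\nabla w/w$. From $w = H^{-1/2}$ we get $\nabla w = -\tfrac12 H^{-3/2}\nabla H$, hence $\nabla w/w = -\nabla H/(2H)$ and therefore
\[
 -\frac{f(x)^\top\nabla w(x)}{w(x)} = \frac{f(x)^\top\nabla H(x)}{2H(x)}.
\]
Inserting $f = (J-R)\nabla H$ and using that $J$ is skew-symmetric (so $\nabla H^\top J\nabla H = 0$) and $R$ is symmetric yields $f^\top\nabla H = \nabla H^\top(J^\top - R^\top)\nabla H = -\nabla H^\top R\nabla H$, whence
\[
 -\frac{f(x)^\top\nabla w(x)}{w(x)} = -\frac{\nabla H(x)^\top R(x)\nabla H(x)}{2H(x)} \quad\text{for a.e. } x\in\Omega.
\]
Taking the essential supremum and invoking \eqref{eq:pHsystem_stable} gives $\esssup_{x\in\Omega}\bigl(-f(x)^\top\nabla w(x)/w(x)\bigr) = \omega_0 < 0$, which is precisely \eqref{eq:l1_dissipative} and, a fortiori, \cref{assumption:comp_weight}(ii) with a finite negative constant. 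Since the tangent condition (iv) guarantees flow-invariance of $\bar\Omega$ under \eqref{eq:dyn_sys}, all hypotheses of \cref{thm:exp_stab} are in force and the proposition follows.

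I do not expect a genuine obstacle here: the proof is a direct substitution plus the elementary identity $\nabla H^\top J\nabla H = 0$ for skew-symmetric $J$, which removes the conservative (Hamiltonian) part of the drift and isolates the dissipation $-\nabla H^\top R\nabla H \le 0$. The only point needing mild care is the behaviour of $w$ and $f w$ near equilibria of $H$, where both $H$ and $\nabla H$ may degenerate; this is exactly why hypothesis (i) is phrased through the bounded ratio $\nabla H\, H^{-1/2}$ rather than through $\nabla H$ and $H$ separately, and it is what keeps $w^{-1} = H^{1/2}$ in $W^{1,\infty}(\Omega)$ and the components $f_i$ in $\Lpw{\infty}{w}{\Omega}$.
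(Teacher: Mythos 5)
Your proof is correct and follows exactly the route the paper intends: the paper's own proof is the one-line remark that \eqref{eq:l1_dissipative} and the hypotheses of \cref{theorem:lyapunov:semigroup_property} ``can be checked easily,'' and you have simply carried out those checks, with the key identity $f^\top\nabla H = -\nabla H^\top R\nabla H$ (from skew-symmetry of $J$) reducing \eqref{eq:l1_dissipative} to \eqref{eq:pHsystem_stable}. Your additional care about the weight admissibility near zeros of $H$ and the passage from stability on $\Lpw{1}{w}{\Omega}$ to the adjoint statement on $\Lpw{\infty}{w}{\Omega}$ is a welcome filling-in of details the paper leaves implicit.
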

\begin{proof}
	\Cref{eq:l1_dissipative} and the assumptions of  \cref{theorem:lyapunov:semigroup_property} can be checked easily. 
\end{proof}
Note that condition $(i)$ and \eqref{eq:pHsystem_stable} are canonically satisfied if $R(x)$ is positive definite for every $x \in \Omega$ and the smallest eigenvalue can be bounded from below independently of $x \in \Omega$ and furthermore, it holds that
\[ 0 < \essinf_{x \in \Omega} \frac{ \| \nabla H(x) \| }{H(x)^{1/2}} \le \esssup_{x \in \Omega} \frac{ \| \nabla H(x) \| }{H(x)^{1/2}} < \infty. \]
In particular, the inequalities are true if $H$ is quadratic in the neighborhood of $\mathcal{M} := H^{-1}(\{0\})$.

	\section{Nuclear cost and sum of squares solution}	
	\label{sec:cost_operator_and_sum_of_squares}
As mentioned in the introduction, the structure of the cost $g$ plays a crucial role in the approximability of the cost function $v$. In particular, with $g$ and the underlying semigroup, we will derive an operator valued Lyapunov equation.
With the concept of nuclear operators in mind, we refer to a cost function $g$ as nuclear if it can be represented as a sum of squares of elements of the dual space. This class of cost functions is commonly found in many control problems.
\begin{definition}
	\label{defi:lyapunov:cost_operator}
	We say that the cost $g$ of the dynamical system from equation \eqref{eq:cost_func} is \emph{nuclear} with respect to $X$ if it can be represented as
	\[
	g(x) := \sum_{i=1}^\infty c_i(x)^2 \ \ \forall x \in \Omega, \ \ 
	\text{with} \ \ 
	c_i \in X^\ast \ \  \text{and} \ \  \sum_{i=1}^\infty \| c_i \|^2_{X^\ast} < \infty.
	\]
	In this case, we define the following observation operator $C$ and its adjoint $C^\ast$  
	\begin{align*}
		C\colon &X \to \ell_2,\quad \phi \mapsto \left( \inner[X,X^\ast]{\phi}{c_i} \right)_{i \in \N} \\
		C^*\colon& \ell_2 \to X^*,\quad   (a_i)_{i \in \N } \mapsto \sum_{i=1}^\infty a_i c_i .
	\end{align*}
\end{definition}
Note that for the particularly relevant case of a quadratic cost function, we may define $(\tilde{c}_1, \dots, \tilde{c}_r)^\top = \tilde{C} \in \R^{r \times {d}}$, and
$g(x) := x^\top \tilde{C}^\top \tilde{C} x = \sum_{i=1}^r  ( \underbrace{ \tilde{c}_i^\top x  }_{ =: c_i(x) } ) ^ 2$ for all $x \in \R^{d}.$
\begin{lemma}
\label{lemma:nuclear_infinite_time_admissible}
	If the semigroup $S(t)$ is exponentially stable of type $\omega_0$ over $X$ then
	\[
		\int_0^\infty \sum_{i=1}^\infty \| S^\ast(t) c_i \|_{X^\ast}^2 \dint t \le K^2.
	\]	
\end{lemma}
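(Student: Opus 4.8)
The plan is to bound the integral by directly estimating $\|S^\ast(t)c_i\|_{X^\ast}$ using the exponential stability of $S(t)$ on $X = \Lpw{1}{w}{\Omega}$, transferred to $X^\ast = \Lpw{\infty}{w}{\Omega}$ by duality. Since $S^\ast(t)$ is the adjoint of $S(t)$, for any $\varphi \in X^\ast$ we have
\[
\|S^\ast(t)\varphi\|_{X^\ast} = \sup_{\|\rho\|_X \le 1} |\inner[X,X^\ast]{\rho}{S^\ast(t)\varphi}| = \sup_{\|\rho\|_X \le 1} |\inner[X,X^\ast]{S(t)\rho}{\varphi}| \le \|S(t)\|_{\mathcal{L}(X)}\,\|\varphi\|_{X^\ast}.
\]
By \cref{thm:exp_stab}, $\|S(t)\|_{\mathcal{L}(X)} \le \exp(\omega_0 t)$ with $\omega_0 < 0$, hence $\|S^\ast(t)c_i\|_{X^\ast} \le \exp(\omega_0 t)\|c_i\|_{X^\ast}$, which gives $\|S^\ast(t)c_i\|_{X^\ast}^2 \le \exp(2\omega_0 t)\|c_i\|_{X^\ast}^2$.

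First I would justify interchanging the sum and the integral: since all terms are nonnegative, the monotone convergence theorem (Tonelli) applies and no additional hypotheses are needed. Then
\[
\int_0^\infty \sum_{i=1}^\infty \|S^\ast(t)c_i\|_{X^\ast}^2 \dint t = \sum_{i=1}^\infty \int_0^\infty \|S^\ast(t)c_i\|_{X^\ast}^2 \dint t \le \sum_{i=1}^\infty \|c_i\|_{X^\ast}^2 \int_0^\infty \exp(2\omega_0 t)\dint t = \frac{1}{-2\omega_0}\sum_{i=1}^\infty \|c_i\|_{X^\ast}^2.
\]
Since $g$ is nuclear with respect to $X$ by \cref{defi:lyapunov:cost_operator}, the series $\sum_{i=1}^\infty \|c_i\|_{X^\ast}^2$ converges, so setting $K^2 := \frac{1}{-2\omega_0}\sum_{i=1}^\infty \|c_i\|_{X^\ast}^2 < \infty$ completes the argument.

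The only genuinely delicate point is confirming that the norm bound $\|S^\ast(t)\|_{\mathcal{L}(X^\ast)} \le \exp(\omega_0 t)$ is legitimate: this is the standard fact that the adjoint of a bounded operator has the same operator norm, combined with \cref{thm:exp_stab}. One should note that \cref{thm:exp_stab} only asserts exponential stability over $X$ (not over $H$), which is precisely why the statement here is phrased in terms of $X^\ast$. Everything else is routine: nonnegativity for Tonelli, and integrability of $t \mapsto \exp(2\omega_0 t)$ on $[0,\infty)$ which holds exactly because $\omega_0 < 0$.
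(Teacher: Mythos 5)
Your proposal is correct and follows essentially the same route as the paper: both bound $\| S^\ast(t) c_i \|_{X^\ast}$ by duality against $S(t)$ acting on $X$, invoke the exponential decay $\| S(t) \|_{\mathcal{L}(X)} \le \exp(\omega_0 t)$ from \cref{thm:exp_stab}, and then integrate using the summability $\sum_i \| c_i \|_{X^\ast}^2 < \infty$ from \cref{defi:lyapunov:cost_operator}. Your explicit appeal to Tonelli for the sum--integral interchange and the explicit constant $K^2 = \tfrac{1}{-2\omega_0}\sum_i \| c_i \|_{X^\ast}^2$ are minor refinements the paper leaves implicit.
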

\begin{proof}
	Since $S(t)$ is exponentially stable, we find that 
	\begin{align*}
	\| S^\ast(t) c_i \|_{\Lpw{\infty}{w}{\Omega}} =& \sup_{\phi \in \Lpw{1}{w}{\Omega}, \| \phi \| > 0} \frac{ | \inner[\Lpw{1}{w}{\Omega},\Lpw{\infty}{w}{\Omega}]{ \phi }{S^\ast(t) c_i}|}{\| \phi\|_{\Lpw{1}{w}{\Omega}} } \\
		=& \sup_{\phi \in \Lpw{1}{w}{\Omega}, \| \phi \| > 0}   \frac{ | \inner[\Lpw{1}{w}{\Omega},\Lpw{\infty}{w}{\Omega}]{ S(t) \phi }{ c_i}|}{ \| \phi \|_{\Lpw{1}{w}{\Omega}} } 
		\le C \exp(w_0 t ) \| c_i \|_{\Lpw{\infty}{w}{\Omega}}.
	\end{align*}
	In view of \cref{defi:lyapunov:cost_operator}, we observe that
	\begin{align*}
		\int_0^\infty \sum_{i=1}^\infty \| S^\ast(t) c_i \|^2_{\Lpw{\infty}{w}{\Omega}} \dint t 
		\le C \int_0^\infty \exp(2 \omega_0 t )  \sum_{i=1}^\infty\| c_i \|_{\Lpw{\infty}{w}{\Omega}}^2 \dint t < \infty.
	\end{align*}
\end{proof}
 
The convergence of the integral allows us to define an operator $P$ by integrating
the cost following the composition semigroup along time in the space of nuclear operators thereby preserving the nuclearity. We will call the associated bilinear form value bilinear form as it will later on give rise to the Lyapunov function. More precisely, let us consider the following definition.
\begin{definition}
	For a given nuclear cost $g$ and its corresponding observation operator $C$, we define the \emph{value bilinear form} as 
	\begin{align}\label{defi:value_bilinear_form} 
	\inner[P]{\phi}{\psi} := \int_{0}^\infty \inner[\ell^2]{C S(t) \phi}{ C S(t) \psi } \dint t \qquad \forall \phi, \psi \in X.
	\end{align} 
\end{definition}
For an exponentially decaying semigroup over a Hilbert space it has already been shown (\cite{CurS01}) that for a finite rank observation operator $C$ the Gramian $P$ and the so-called observability map $\mathfrak{C}$ is a nuclear and a Hilbert-Schmidt operator, respectively. However, in our case we do not have an exponentially decaying semigroup over the entire Hilbert space $H$. Instead, we obtain the exponential decay only over $X$ and $X^\ast$, respectively. As it turns out, this is still sufficient because we assumed that the observation operator is bounded on $X$.
\begin{theorem}
	If $S(t)$ is exponentially stable over $X$, then the following holds: 
	\begin{enumerate}[label=(\roman*)]
		\item The observability map $\mathfrak{C} \colon H \to \Lp{2}{0,\infty; \ell_2 }$ with $\mathfrak{C}(\phi) = C S(\cdot) \phi$  for $\phi \in H$ is a Hilbert-Schmidt operator.
		\label{theorem:nuclear_gramian:enum:HS}
		\item The value bilinear form is bounded over $X$, i.e.,
		\[
			\inner[P]{\phi}{\psi} \le C \| \phi \|_{X} \| \psi \|_{X} \qquad \text{for} \; \phi, \psi \in X.
		\]
		\label{theorem:nuclear_gramian:enum:bounded}
		\item The value bilinear form admits the representation
		\[
		\inner[P]{\phi}{\psi} = \sum_{i=1}^\infty \inner[X,X^\ast]{\phi}{p_i} \inner[X,X^\ast]{\psi}{p_i} 
		\]
		with $p_i \in X^\ast$ satisfying
		$
			\sum_{i=1}^\infty \| p_i \|^2_H < \infty 
		$.
		\label{theorem:nuclear_gramian:enum:repres}
	\end{enumerate}  
	\label{theorem:nuclear_gramian}
\end{theorem}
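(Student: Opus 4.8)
The plan is to establish the three claims essentially in the order stated, exploiting the exponential decay on $X$ together with the boundedness of $C$ on $X$ and the embeddings $\Lpw{\infty}{w}{\Omega} \subset H \subset X$ from \cref{lem:lpw_embeddings}. The whole argument rests on \cref{lemma:nuclear_infinite_time_admissible}, which tells us $\int_0^\infty \sum_i \|S^\ast(t) c_i\|_{X^\ast}^2 \dint t =: K^2 < \infty$, and on the fact that $\|\cdot\|_{X^\ast} = \|\cdot\|_{\Lpw{\infty}{w}{\Omega}}$ dominates $\|\cdot\|_H$ up to the constant $C(\Omega)^{1/2}$ from the second inequality in the proof of \cref{lem:lpw_embeddings}.

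For \ref{theorem:nuclear_gramian:enum:HS}, I would pick an orthonormal basis $(e_j)_{j}$ of $H$ and compute the Hilbert--Schmidt norm $\|\mathfrak{C}\|_{HS}^2 = \sum_j \|\mathfrak{C} e_j\|_{\Lp{2}{0,\infty;\ell_2}}^2 = \sum_j \int_0^\infty \sum_i |\inner[X,X^\ast]{S(t) e_j}{c_i}|^2 \dint t$. Moving the $S(t)$ onto the other factor via the duality pairing, this equals $\sum_j \int_0^\infty \sum_i |\inner[H]{e_j}{S^\ast(t) c_i}|^2 \dint t$ — here I use that for $\varphi \in H \subset X^\ast$ and $\rho \in H \subset X$ the $X,X^\ast$-pairing agrees with the $H$-inner product (both are $\int \rho\varphi w^2$). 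By Parseval in $H$ (and Tonelli to interchange the sums and the integral, all terms being nonnegative) this collapses to $\int_0^\infty \sum_i \|S^\ast(t) c_i\|_H^2 \dint t \le C(\Omega) \int_0^\infty \sum_i \|S^\ast(t) c_i\|_{X^\ast}^2 \dint t \le C(\Omega) K^2 < \infty$, so $\mathfrak{C}$ is Hilbert--Schmidt. Claim \ref{theorem:nuclear_gramian:enum:bounded} is then immediate: for $\phi,\psi \in X$ Cauchy--Schwarz in $\Lp{2}{0,\infty;\ell_2}$ and the exponential-stability estimate $\|CS(t)\phi\|_{\ell_2} \le \|C\|_{\mathcal{L}(X,\ell_2)} \|S(t)\|_{\mathcal{L}(X)} \|\phi\|_X \le C e^{\omega_0 t}\|\phi\|_X$ give $|\inner[P]{\phi}{\psi}| \le C^2 \big(\int_0^\infty e^{2\omega_0 t}\dint t\big) \|\phi\|_X\|\psi\|_X$, with $\omega_0 < 0$ making the integral finite; note $C$ is bounded on $X$ because each $c_i \in X^\ast$ and $\sum_i \|c_i\|_{X^\ast}^2 < \infty$.

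For \ref{theorem:nuclear_gramian:enum:repres} I would use that $\mathfrak{C}$, being Hilbert--Schmidt from $H$ into $\Lp{2}{0,\infty;\ell_2}$, factors through its singular value decomposition: there are orthonormal systems $(\phi_k)$ in $H$, $(\Psi_k)$ in $\Lp{2}{0,\infty;\ell_2}$ and $\sigma_k \ge 0$ with $\sum_k \sigma_k^2 < \infty$ and $\mathfrak{C}\rho = \sum_k \sigma_k \inner[H]{\rho}{\phi_k}\Psi_k$. Then $\inner[P]{\phi}{\psi} = \inner[\Lp{2}{0,\infty;\ell_2}]{\mathfrak{C}\phi}{\mathfrak{C}\psi} = \sum_k \sigma_k^2 \inner[H]{\phi}{\phi_k}\inner[H]{\psi}{\phi_k}$, and setting $p_k := \sigma_k \phi_k w^{-2}$ — so that $\inner[X,X^\ast]{\phi}{p_k} = \int_\Omega \phi\, p_k w^2 = \sigma_k\inner[H]{\phi}{\phi_k}$, provided $p_k \in X^\ast$ — yields the claimed representation with $\sum_k \|p_k\|_H^2 = \sum_k \sigma_k^2 \|\phi_k w^{-2}\|_H^2$. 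The main obstacle is exactly this last point: one needs $p_k = \sigma_k\phi_k w^{-2} \in X^\ast = \Lpw{\infty}{w}{\Omega}$, which is not automatic from $\phi_k \in H$. I expect this is handled by observing that $p_k$ should really be identified as $\mathfrak{C}^\ast$ applied to the basis elements $\Psi_k$ composed with the right identification, or equivalently by noting that the value bilinear form is bounded on $X$ (claim \ref{theorem:nuclear_gramian:enum:bounded}) so $P$ extends to a bounded operator $X \to X^\ast$ and the singular functions of this $X$-realization automatically lie in $X^\ast$; combining the $H$-side summability $\sum\sigma_k^2 < \infty$ with the $X$-boundedness then forces $\sum_k\|p_k\|_H^2 < \infty$. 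I would make this rigorous by working with $\mathfrak{C}$ as an operator from $X$ (using the dense embedding $X^\ast \hookrightarrow H \hookrightarrow X$) and transporting the Hilbert--Schmidt decomposition of the $H$-realization back, checking the regularity of the resulting $p_k$ via the adjoint identity $\inner[X,X^\ast]{\phi}{p_i} = \int_0^\infty \inner[\ell_2]{CS(t)\phi}{(\mathfrak{C}\phi_i)(t)}\dint t$ and the uniform bound $\|S^\ast(t)c_i\|_{X^\ast} \le Ce^{\omega_0 t}\|c_i\|_{X^\ast}$.
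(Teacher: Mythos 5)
Your treatment of parts \ref{theorem:nuclear_gramian:enum:HS} and \ref{theorem:nuclear_gramian:enum:bounded} is essentially the paper's argument: in both cases the key quantity is $\int_0^\infty\sum_i\norm[H]{S^\ast(t)c_i}^2\dint t$, controlled via the embedding $X^\ast\subset H$ and \cref{lemma:nuclear_infinite_time_admissible}. Your version of \ref{theorem:nuclear_gramian:enum:HS} is in fact slightly more direct: you apply Parseval once against a single orthonormal basis of $H$, whereas the paper additionally expands against an orthonormal basis of $\Lp{2}{0,\infty}$ --- but the paper does this because it needs the resulting coefficient functionals $a_n^{(i)}$ for part \ref{theorem:nuclear_gramian:enum:repres}, not because the extra step is required for the Hilbert--Schmidt claim. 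One small slip: the inclusion you invoke should read $X^\ast\subset H$, not $H\subset X^\ast$; since $S^\ast(t)c_i\in X^\ast$, the identification of the $X,X^\ast$-pairing with the $H$-inner product still goes through.

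Part \ref{theorem:nuclear_gramian:enum:repres} is where the genuine gap lies, and you have correctly identified it yourself but not closed it. The statement requires $p_k\in X^\ast=\Lpw{\infty}{w}{\Omega}$, while the singular functions $\phi_k$ of the Hilbert--Schmidt operator $\mathfrak{C}\colon H\to\Lp{2}{0,\infty;\ell_2}$ a priori live only in $H$; your proposed remedies remain in the conditional (``I expect'', ``I would'') and none is carried out. The paper avoids the problem entirely by never passing to the SVD: it shows directly that each coefficient functional $a_n^{(i)}(\phi)=\int_0^\infty H_n(t)\inner[X,X^\ast]{\phi}{S^\ast(t)c_i}\dint t$ is bounded on $X$ with $|a_n^{(i)}(\phi)|\le K\norm[X]{\phi}$, hence has a representative $p_{i,n}\in X^\ast$, and the required summability $\sum_{i,n}\norm[H]{p_{i,n}}^2<\infty$ is exactly the Hilbert--Schmidt computation from part \ref{theorem:nuclear_gramian:enum:HS}. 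If you insist on the SVD route, the cleanest repair is to observe that $\sigma_k\phi_k=\mathfrak{C}^\ast\Psi_k=\int_0^\infty S^\ast(t)C^\ast\Psi_k(t)\dint t$ and that this integral converges in $X^\ast$: one has $\norm[X^\ast]{C^\ast a}\le\bigl(\sum_i\norm[X^\ast]{c_i}^2\bigr)^{1/2}\norm[\ell_2]{a}$ and $\norm[{\mathcal{L}(X^\ast)}]{S^\ast(t)}\le M\exp(\omega_0 t)$ with $\omega_0<0$, so Cauchy--Schwarz in $t$ bounds $\norm[X^\ast]{\mathfrak{C}^\ast\Psi_k}$ by a constant times $\norm[{\Lp{2}{0,\infty;\ell_2}}]{\Psi_k}$, which puts $p_k:=\sigma_k\phi_k$ in $X^\ast$. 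Note finally that with the paper's duality pairing for $p=1$, namely $\inner[X,X^\ast]{\phi}{\varphi}=\int_\Omega\phi\varphi\, w^2\dint x$, the correct identification is $p_k=\sigma_k\phi_k$ and not $\sigma_k\phi_k w^{-2}$: the factor $w^2$ already appears in both the pairing and the $H$-inner product, so your formula as written does not reproduce $\sigma_k\inner[H]{\phi}{\phi_k}$.
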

\begin{proof}
	We start with \ref{theorem:nuclear_gramian:enum:HS}.
	Let $H_n$ be an orthonormal basis for $\Lp{2}{0,\infty}$. For $\phi \in X$ we define
	\begin{equation*}
		a_n^{(i)}(\phi) := \inner[\Lp{2}{0,\infty}]{H_n}{\inner[X,X^\ast]{\phi}{S^\ast(\cdot) c_i }} = \int_0^\infty H_n(t) \inner[X,X^\ast]{ \phi}{ S^\ast(t) c_i } \dint t .
	\end{equation*}
	We want to show that $a_n^{(i)} \in X^\ast$. Linearity is obvious.  For boundedness, we obtain 
	\begin{align*}
		 | a_n^{(i)}(\phi)  |^2 
		 \le \| H_n \|^2_{\Lp{2}{0,\infty}} \| \phi \|^2_X \int_0^\infty \| S^\ast(t) c_i \|^2_{X^\ast} \dint t \le K^2 \| \phi \|_X^2  
	\end{align*}
by the result from \cref{lemma:nuclear_infinite_time_admissible}. Furthermore, since $a_{n}^{(i)} \in X^\ast = \Lpw{\infty}{w}{\Omega} \subseteq \Lpw{2}{w^2}{\Omega} = H^\ast$ we obtain $a_n^{(i)} \in H^\ast$. Denoting by $e_j\in\ell_2$ the canonical unit vector, with the orthonormality of $H_n$ we can rewrite $\mathfrak{C} \phi$ for $\phi \in H$, i.e.
	\[
		\mathfrak{C} \phi = CS(\cdot)\phi= \left(\langle S(\cdot)\phi,c_i\rangle_{X,X^*} \right)_{i\in \mathbb N}= \sum_{i=1}^\infty \inner[H]{\phi}{S^\ast(\cdot) c_i} e_i = \sum_{i=1}^\infty \sum_{n=0}^\infty  a_n^{(i)}(\phi) H_n(\cdot) e_i.
	\]
	Since there exists a bijection between $\N \times \N_0$ and $\N$, for showing that $\mathfrak{C}$ is Hilbert-Schmidt it is sufficient to show $\sum_{i,n} \| a_n^{(i)} \|^2_{H^\ast} < \infty$. For this purpose, let $\{ \phi_k \}$ be an orthonormal basis of $H$. Using Parseval's indentity twice it follows that
	\begin{align*}
		\sum_{i=1,n=0}^\infty \| a_n^{(i)} \|_{H^\ast}^2 = & \sum_{i=1}^\infty \sum_{n=0}^\infty \sum_{k=1}^\infty | a_n^{i}(\phi_k) |^2 = 
		  \sum_{i,k=1}^\infty \hspace{-3px} \left( \sum_{n=0}^\infty  \inner[\Lp{2}{0,\infty}]{H_n}{\inner[X,X^\ast]{\phi_k}{S^\ast(\cdot) c_i }} ^2 \right)\\
		 = & \sum_{i,k=1}^\infty \| \inner[X,X^\ast]{\phi_k }{S^\ast(t) c_i} \|^2_{\Lp{2}{0,\infty}}.
	\end{align*}
	Using monotone convergence \cite[Theorem 4, Appendix E]{Eva98} to interchange summation and integration, we may rewrite this expression according to
	\begin{align*}
		\sum_{i=1,n=0}^\infty \hspace{-5px} \| a_n^{(i)} \|_{H^*}^2 = &  \sum_{i=1}^\infty \sum_{k=0}^\infty \int_0^\infty  \inner[X,X^\ast]{\phi_k }{S^\ast(t) c_i}^2 \dint t = \sum_{i=1}^\infty \int_0^\infty \sum_{k=0}^\infty \inner[X,X^\ast]{\phi_k }{S^\ast(t) c_i}^2 \dint t\\
		= & \sum_{i=1}^\infty \int_0^\infty \| S^\ast(t) c_i \|^2_H \dint t \le C(\Omega) \sum_{i=1}^\infty \int_0^\infty \| S^\ast(t) c_i \|^2_{X^\ast} \dint t < \infty.
	\end{align*}
	For \ref{theorem:nuclear_gramian:enum:bounded} we can directly use \cref{lemma:nuclear_infinite_time_admissible} and arrive at
	\begin{align*}	
		\inner[P]{\phi}{\psi} =& \int_0^\infty \inner[\ell_2]{C S(t) \phi}{C S(t) \psi } \dint t
		 = \int_0^\infty \sum_{i=1}^\infty \inner[X,X^\ast]{\phi}{S^\ast(t) c_i}\inner[X,X^\ast]{\psi}{S^\ast(t) c_i} \dint t \\
		 \le&  \int_0^\infty \sum_{i=1}^\infty \| S^\ast(t) c_i \|_{X^\ast}^2 \| \phi\|_X \|\psi \|_X \dint t \le K^2  \| \phi\|_X \|\psi \|_X.
	\end{align*}	
	Lastly, for \ref{theorem:nuclear_gramian:enum:repres} we note that there exists a representative $p_{i,n} \in X^\ast$, such that $a^{(i)}_n(\phi) = \inner[X,X^\ast]{\phi}{p_{i,n}}$. Therefore, by definition
	\begin{align*}
		\inner[P]{\phi}{\psi} &=  \inner[\Lp{2}{0,\infty;\ell_2}]{\mathfrak{C} \phi}{\mathfrak{C} \psi } 
		=  \sum_{i=1}^\infty\sum_{n=0}^\infty a_n^{(i)} (\phi ) a^{(i)}_n(\psi)\\
		&= \sum_{i=1}^\infty\sum_{n=0}^\infty \inner[X,X^\ast]{\phi}{p_{i,n}} \inner[X,X^\ast]{\psi}{p_{i,n}}.
	\end{align*}
	We have already shown that $\sum_{i=1}^\infty\sum_{n=0}^\infty \|a^{(i)}_n\|^2_{H^\ast} = \sum_{i=1}^\infty\sum_{n=0}^\infty \| p_{i,n} \|^2_H < \infty$ and because there exists a bijection between $\N \times \N_0$ and $\N$ the statement is proven.
\end{proof}
\begin{remark}
 It is important to note that the norm used in \ref{theorem:nuclear_gramian:enum:repres} is the norm of the Hilbert space $H$ and not the stronger norm of $X^\ast$.
\end{remark}

The existence of such a decomposition alone may already be useful, but if the decay is fast then it is justified to use efficient finite rank approximations, which are of great interest from a numerical point of view. For smooth enough $f \colon [0,\infty) \to \R$ a decay of the coefficients of the basis representation of the Laguerre polynomials can be shown under some assumptions \cite[Section 3]{GotO77}, by using the spectral properties of the Sturm-Liouville operator. This construction can also be applied to show that smooth dynamics and cost result in an eigenvalue decay that is faster than any polynomial. We note that an exponential decay rate in the slightly different setting where the semigroup is stable over $H$ has been shown \cite{Opm20}, which can likely be generalized to our setting. However, the following result also allows to treat dynamics and costs that only enjoy a Sobolev regularity $W^{m,\infty}(\Omega)$.

\begin{theorem}
	Let $C \colon X \to \R^r$ be of finite rank $r \in \N$ and $S(t)$ exponentially stable over $X$ with decay rate $\omega_0$. If  

	\[
	\range(C^\ast) \subseteq \mathcal{D}((A^\ast)^m) \subseteq X^\ast \qquad \text{with $m$ even}
	\] 	 
	
	then there exists $p_n \in X^\ast$ such that 
	\[
		 \inner[P]{\phi}{\psi} = \sum_{n=0}^\infty \inner[X,X^\ast]{\phi}{p_n} \inner[X,X^\ast]{\psi}{p_n} \quad \text{with} \quad \sum_{n=N}^\infty \|p_n\|^2_{H} \in \mathcal{O}(N^{-m}) . 
	\]
	\label{thm:spectral_decay}
\end{theorem}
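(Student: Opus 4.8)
Write $C\phi=(\inner[X,X^\ast]{\phi}{c_i})_{i=1}^r$ with $c_i\in X^\ast$, so that $\inner[P]{\phi}{\psi}=\sum_{i=1}^r\int_0^\infty\inner[X,X^\ast]{\phi}{S^\ast(t)c_i}\inner[X,X^\ast]{\psi}{S^\ast(t)c_i}\dint t$. The plan is to expand the trajectories $t\mapsto S^\ast(t)c_i$ in the Laguerre basis of $\Lp{2}{0,\infty}$ and to read off the decay from the fact that, under $\range(C^\ast)\subseteq\mathcal D((A^\ast)^m)$ with $m$ even, these trajectories lie in the domain of the $m/2$-th power of the associated Sturm--Liouville operator, whose eigenvalues grow linearly. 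Concretely, choose the orthonormal basis $\{H_n\}_{n\ge0}$ of $\Lp{2}{0,\infty}$ occurring in the proof of \cref{theorem:nuclear_gramian} to be the Laguerre functions $H_n(t):=e^{-t/2}L_n(t)$, with $L_n$ the $n$-th Laguerre polynomial. These are the eigenfunctions of the self-adjoint operator $\mathcal L:=-\partial_t(t\,\partial_t)+\tfrac{t}{4}$ with $\mathcal L H_n=(n+\tfrac12)H_n$ (cf.\ \cite[Section 3]{GotO77}); since $m$ is even, $\mathcal L^{m/2}$ is a differential operator of order $m$ whose coefficients are polynomials in $t$ of degree at most $m/2$.

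\textbf{Regularity of the trajectories.} Put $g_i(t):=S^\ast(t)c_i$. Because $A^\ast$ generates $S^\ast(t)$ and $c_i\in\mathcal D((A^\ast)^m)$, one has $g_i^{(k)}(t)=S^\ast(t)(A^\ast)^kc_i$ for $0\le k\le m$; since $\Omega$ is bounded, the $X$-domains embed into the corresponding $H$-domains via \cref{lem:lpw_embeddings} and $S^\ast(t)$ is strongly continuous on $H$, so these derivatives exist in the norm of $H$. By \cref{theorem:lyapunov:semigroup_property} and exponential stability, $\norm[X^\ast]{g_i^{(k)}(t)}\le e^{\omega_0 t}\norm[X^\ast]{(A^\ast)^kc_i}$ and hence $\norm[H]{g_i^{(k)}(t)}\le C\,e^{\omega_0 t}\norm[X^\ast]{(A^\ast)^kc_i}$; as $\omega_0<0$ this gives $g_i\in\Lp{2}{0,\infty;H}$. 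Applying $\mathcal L^{m/2}$ pointwise to $g_i$ produces a finite linear combination of the $g_i^{(k)}$, $k\le m$, with coefficients polynomial in $t$, and the exponential decay dominates all these weights, so $\mathcal L^{m/2}g_i\in\Lp{2}{0,\infty;H}$ as well.

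\textbf{Coefficient decay and assembly.} Define $p_{i,n}\in X^\ast$ by $\inner[X,X^\ast]{\phi}{p_{i,n}}:=\int_0^\infty H_n(t)\,\inner[X,X^\ast]{\phi}{g_i(t)}\dint t$; this is a bounded functional by the estimates already used in \cref{lemma:nuclear_infinite_time_admissible} and \cref{theorem:nuclear_gramian}, and viewed in $H$ it is the Bochner integral $\int_0^\infty H_n(t)g_i(t)\dint t$, i.e.\ the $n$-th Laguerre coefficient of $g_i\in\Lp{2}{0,\infty;H}$. Since the coefficient $t$ of $\mathcal L$ vanishes at $t=0$ while $g_i$ and its derivatives remain bounded there, and everything decays exponentially as $t\to\infty$, the $m$-fold integration by parts against $H_n$ carries no boundary terms, so that $p_{i,n}=(n+\tfrac12)^{-m/2}q_{i,n}$, where $q_{i,n}$ is the $n$-th Laguerre coefficient of $\mathcal L^{m/2}g_i$; Parseval gives $\sum_{n\ge0}\norm[H]{q_{i,n}}^2=\norm[\Lp{2}{0,\infty;H}]{\mathcal L^{m/2}g_i}^2<\infty$, and therefore
\[
\sum_{n\ge N}\norm[H]{p_{i,n}}^2=\sum_{n\ge N}(n+\tfrac12)^{-m}\norm[H]{q_{i,n}}^2\le(N+\tfrac12)^{-m}\,\norm[\Lp{2}{0,\infty;H}]{\mathcal L^{m/2}g_i}^2 .
\]
With this choice of $H_n$, the argument in the proof of \cref{theorem:nuclear_gramian}, part~\ref{theorem:nuclear_gramian:enum:repres}, yields $\inner[P]{\phi}{\psi}=\sum_{i=1}^r\sum_{n\ge0}\inner[X,X^\ast]{\phi}{p_{i,n}}\inner[X,X^\ast]{\psi}{p_{i,n}}$; interleaving the $r$ sequences $(p_{i,n})_{n\ge0}$, $i=1,\dots,r$, into a single sequence $(p_n)_{n\ge0}$ leaves this sum unchanged, and by the displayed estimate $\sum_{n\ge N}\norm[H]{p_n}^2\in\mathcal O(N^{-m})$, which is the assertion.

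\textbf{Main obstacle.} The delicate point is the last step above: rigorously identifying the pointwise differential expression $\mathcal L^{m/2}g_i$ with the action of the abstract self-adjoint power on $g_i$ — equivalently, showing that the iterated integration by parts against $H_n$ really produces no boundary contributions at the \emph{limit-circle} endpoint $t=0$ and the \emph{limit-point} endpoint $t=\infty$ — while handling the fact that $g_i$ takes values in the non-reflexive dual $X^\ast$, so that $p_{i,n}$ has to be obtained by duality rather than through a Bochner integral in $X^\ast$. Keeping track of the polynomials in $t$ generated by $\mathcal L^{m/2}$ and checking that each resulting term is controlled by some $(A^\ast)^kc_i$ with $k\le m$ together with the exponential decay is exactly where the hypothesis $\range(C^\ast)\subseteq\mathcal D((A^\ast)^m)$ with $m$ even is used in full. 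A shortcut would be to invoke \cite[Section 3]{GotO77} for the coefficient decay directly, at the price of having to verify that its weighted Sobolev hypotheses follow from the present exponential bounds.
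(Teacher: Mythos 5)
Your proposal is correct and follows essentially the same route as the paper: expansion in the Laguerre basis $H_n(t)=e^{-t/2}L_n(t)$, repeated integration by parts against the Sturm--Liouville operator to trade one power of the eigenvalue ($n$ in the paper, $n+\tfrac12$ in your normalization) for one application of $A^\ast$ to $c_i$, iterated $m/2$ times under the hypothesis $\range(C^\ast)\subseteq\mathcal D((A^\ast)^m)$, followed by Parseval and the tail estimate. The only difference is presentational — you transfer $\mathcal L^{m/2}$ onto the $H$-valued trajectory $S^\ast(\cdot)c_i$ in $\Lp{2}{0,\infty;H}$ in one step, whereas the paper iterates the substitution on the scalar pairings $\inner[X,X^\ast]{\phi}{e^{t/2}S^\ast(t)c_i}$ — and your explicit attention to the vanishing boundary terms is, if anything, more careful than the paper's.
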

\begin{proof}
	We construct the solution $\einner{P}$ similarly as the alternating direction implicit method from, e.g., \cite[ Remark 4.5]{OpmRW13} with shifts $\frac{1}{2}$. Let $L_n$ for $n \in \N$ be the normalized Laguerre polynomials \cite[Equation 5.1.1]{Sze39} \cite[Section 3]{GotO77}. We follow the construction from \cref{theorem:nuclear_gramian}, with the orthonormal basis of $\Lp{2}{0,\infty;\R}$ given as 
	\[
		H_n(t) := \exp(-t/2) L_n(t).
	\]	
	With these we will construct a sequence of decompositions, starting with 
	\begin{equation}
		a^{(i)}_{0,n}(\phi) := \int_0^\infty \inner[X,X^\ast]{\phi}{\exp(t/2)S^\ast(t) c_i} L_n(t) \exp(-t) \dint t.
		\label{eq:spectral_decay_dual}
	\end{equation}
	To construct the next decomposition $a^{(i)}_{1,n}$ we first note that normalized Laguerre polynomials $L_n$ are eigenvalues to the Sturm-Liouville problem \cite[Eq. 3.26]{GotO77} 
	\begin{equation*}
		\frac{\mathrm{d}}{\mathrm{d}t} \left(  p(t) \frac{\mathrm{d}L_n(t)}{\mathrm{d}t} \right) + (\lambda_n w(t) - q(t) ) L_n(t) = 0
	\end{equation*}
	with $p(t) = t \exp(-t), q(t) =0$ and $w(t) = \exp(-t)$. Therefore
	\begin{equation}
		\frac{\mathrm{d}}{\mathrm{d}t} \left(  t \exp(-t) \frac{\mathrm{d}L_n(t)}{\mathrm{d}t} \right) = - \lambda_n \exp(-t) L_n(t).
		\label{eq:spectral_decay_sturm_liouville}		
	\end{equation}
	 For the eigenvalues we find $\lambda_n = n$ \cite[Section 3, Page 42]{GotO77}. Therefore, we can substitute $L_n(t) \exp(-t)$ in  \eqref{eq:spectral_decay_dual} by the expression from \eqref{eq:spectral_decay_sturm_liouville} and obtain
	\begin{align}
			a^{(i)}_{0,n}(\phi) :=& - \frac{1}{\lambda_n} \int_0^\infty 	\inner[X,X^\ast]{\phi}{\exp(t/2)S^\ast(t) c_i} 	\frac{\mathrm{d}}{\mathrm{d}t} \left(  p(t) \frac{\mathrm{d}L_n(t)}{\mathrm{d}t} \right) \dint t.\\
			\intertext{Two times partial integration yields}
			a^{(i)}_{0,n}(\phi) =&\frac{1}{\lambda_n} \int_0^\infty h_1(t) L_n(t) \exp(-t) \dint t
			\label{eq:sturm_liouville_replacement}
	\end{align}
	with 
	\begin{align*}
		h_1(t) :=& \left( - \frac{\dint }{\dint t} \left( t \exp(-t) \frac{\dint }{\dint t}
		\inner[X,X^\ast]{\phi}{\exp(t/2)S^\ast(t) c_i}  \right) \right) \exp(t)\\
		= & 	\inner[X,X^\ast]{\phi}{\exp(t/2)S^\ast(t) \left( (t-1)\left( A^\ast + \frac{1}{2}I \right) c_i - t \left( A^\ast + \frac{1}{2}I \right)^2 c_i \right) } \\
		= & \sum_{k=0}^2 p_{1,k}(t) \inner[X,X^\ast]{\phi}{\exp(t/2)S^\ast(t) (A^\ast)^k c_i },
	\end{align*}
	where $p_{1,k}(t)$ are polynomials with degree smaller or equal than $1$. Note that $(A^\ast)^k c_i \in \Lpw{\infty}{w}{\Omega}$ and since $S^\ast(t)$ is exponentially stable, we conclude that
	\begin{align*}
		& \int_0^\infty h_1(t)^2 \exp(-t) \dint t \\
		 \le &  \int_0^\infty \sum_{k,k^\prime=0}^2 | p_{1,k}(t) p_{1,k^\prime}(t)  \inner[X,X^\ast]{\phi}{S^\ast(t) (A^\ast)^k c_i }\inner[X,X^\ast]{\phi}{S^\ast(t) (A^\ast)^{k^\prime} c_i }| \dint t \\
		 \le & \| \phi \|^2_X \int_0^\infty \sum_{k,k^\prime=0}^2 | p_{1,k}(t) p_{1,k^\prime}(t) | \| S^\ast(t) (A^\ast)^k c_i \|_{X^\ast}   \| S^\ast(t) (A^\ast)^{k^\prime} c_i \|_{X^\ast} \dint t\\
		 \le & \| \phi \|^2_X \int_0^\infty  \sum_{k,k^\prime=0}^2 C_k C_{k^\prime} | p_{1,k}(t) p_{1,k^\prime}(t) |   \exp( 2 \omega_0 t ) \dint t
		  \le C \| \phi \|^2_X.
	\end{align*}
	with $C_k := \sup_{t \in [0,\infty) }\| \exp( - \omega_0 t ) S^\ast(t) ( A^\ast )^k c_i \|_{X^\ast}$. Note that $C_k < \infty$ because $A^k c_i \in X^\ast$ and $S^\ast(t)$ is exponentially stable of type $\omega_0$ on $X^\ast$. Therefore $a^{(i)}_{1,n}(\phi) := \int_0^\infty h_1(t) L_n(t) \exp(-t) \dint t \in X^\ast = \Lpw{\infty}{w}{\Omega}$ with $\sum_{n=0}^\infty \| a^{(i)}_{1,n}\|^2_{H} < \infty$ by the same argument as in 
	\cref{theorem:nuclear_gramian}.
	We then can replace $L_n(t) w(t)$ in \eqref{eq:sturm_liouville_replacement} again and construct a new $h_2$ of the form
	\[
		h_2(t) = \sum_{k=0}^4 p_{2,k}(t) \inner[X,X^\ast]{\phi}{\exp(t/2)S^\ast(t) (A^\ast)^k c_i }
	\] with $a_{0,n}^{(i)}(\phi) = \frac{1}{\lambda_n^2} \int_0^\infty h_2(t) L_n(t) \exp(-t) \dint t$. This process can be repeated $m/2$-times and we obtain $a^{(i)}_{m/2,n}$ such that 
	\[
		a^{(i)}_{0,n}(\phi) = \frac{1}{\lambda_n^{m/2}} a^{(i)}_{m/2,n}(\phi)
		\quad \text{and} \quad 
		\sum_{n=0}^\infty \| a^{(i)}_{m/2,n}\|^2_{X^\ast} < \infty	.
	\]
	Since $\lambda_n = n$, this means that
	\[
			\sum_{n=N}^\infty \| a^{i}_{0,n} \|^2_{H} = \sum_{n=N}^\infty \left( n^{-\frac{m}{2}} \right)^2 \| a^{i}_{m/2,n} \|^2_{H} \le N^{-m} \sum_{n=1}^\infty \| a^{i}_{m/2,n} \|^2_{H} \in \mathcal{O}( N^{-m})
	\]
	and by the representation
	\[
		\inner[P]{\phi}{\psi} = \sum_{i=1}^r \sum_{n=0}^\infty \inner[X,X^\ast]{\phi}{a^{(i)}_{0,n}} \inner[X,X^\ast]{\psi}{a^{(i)}_{0,n}}
	\]
	from the proof of \cref{theorem:nuclear_gramian} the result follows.
\end{proof}
\begin{lemma}
	Let $f_i \in W^{ m - 1, \infty}(\Omega)\cap \Lpw{\infty}{w}{\Omega}$, $c_i \in W^{ m , \infty}(\Omega) \cap \Lpw{\infty}{w}{\Omega}$ with $m$ even and $c_i = 0$ for $i > r$ for some $r \in \N$. If the preadjoint $S(t)$ of the composition semigroup from \cref{defi:lyapunov:composition_semigroup} is exponentially stable of type $\omega_0$ over $X$, then 
	\[
		\sum_{i = N}^\infty \| p_i \|^2_{H} \in \mathcal{O}( N^{-m}) \qquad \text{for all} \; m \in \N
	\]
	for the representation of $\einner{P}$ from \cref{theorem:nuclear_gramian}.
	\label{thm:spectral_decay_cinfty}
\end{lemma}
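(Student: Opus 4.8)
The plan is to reduce the statement to \cref{thm:spectral_decay}: its only non-trivial hypothesis is $\range(C^\ast)\subseteq\mathcal{D}((A^\ast)^m)$, so once this is derived from the assumed Sobolev regularity the claimed decay is immediate. Since $c_i=0$ for $i>r$, the operator $C$ is of finite rank $r$ and $\range(C^\ast)$ is the span of $c_1,\dots,c_r$, hence it suffices to show $c_i\in\mathcal{D}((A^\ast)^m)$ for each $i\le r$, i.e. that the iterates $\varphi_k:=(A^\ast)^k c_i$ are well defined for $k=0,1,\dots,m$.

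I would establish, by induction on $k$, the stronger statement
\[
\varphi_k \in W^{m-k,\infty}(\Omega)\cap\Lpw{\infty}{w}{\Omega}, \qquad k=0,1,\dots,m.
\]
The case $k=0$ is the hypothesis on $c_i$. For the step, recall from \cref{theorem:lyapunov:semigroup_property} that $W^{1,\infty}(\Omega)\cap\Lpw{\infty}{w}{\Omega}\subseteq\mathcal{D}(A^\ast)$ and $A^\ast\varphi=f^\top\nabla\varphi=\sum_j f_j\,\partial_j\varphi$. Since $m$ is even, $m\ge 2$, so for $k\le m-1$ one has $\varphi_k\in W^{m-k,\infty}(\Omega)\subseteq W^{1,\infty}(\Omega)$, hence $\varphi_k\in\mathcal{D}(A^\ast)$ and $\varphi_{k+1}=\sum_j f_j\,\partial_j\varphi_k$ is well defined. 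If $\varphi_k\in W^{m-k,\infty}(\Omega)$ then $\partial_j\varphi_k\in W^{m-k-1,\infty}(\Omega)\subseteq\Lp{\infty}{\Omega}$; because $f_j\in W^{m-1,\infty}(\Omega)\subseteq W^{m-k-1,\infty}(\Omega)$ (as $k\ge 0$) and $W^{m-k-1,\infty}(\Omega)$ is a Banach algebra on the bounded $\mathcal{C}^1$ domain $\Omega$, the product $f_j\,\partial_j\varphi_k$ stays in $W^{m-k-1,\infty}(\Omega)$. For the weight, the key point is that multiplying a weighted-bounded factor by a plain-bounded one lands back in the weighted space:
\[
\| f_j\,\partial_j\varphi_k \|_{\Lpw{\infty}{w}{\Omega}} \le \|\partial_j\varphi_k\|_{\Lp{\infty}{\Omega}}\,\|f_j\|_{\Lpw{\infty}{w}{\Omega}} < \infty .
\]
Summing over $j$ gives $\varphi_{k+1}\in W^{m-k-1,\infty}(\Omega)\cap\Lpw{\infty}{w}{\Omega}$, closing the induction. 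In particular $\varphi_m\in\Lpw{\infty}{w}{\Omega}=X^\ast$, so $c_i\in\mathcal{D}((A^\ast)^m)$.

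With $\range(C^\ast)\subseteq\mathcal{D}((A^\ast)^m)$ and $C$ of finite rank, \cref{thm:spectral_decay} provides $p_n\in X^\ast$ with $\einner{P}=\sum_n\inner[X,X^\ast]{\cdot}{p_n}\inner[X,X^\ast]{\cdot}{p_n}$ and $\sum_{n=N}^\infty\|p_n\|_H^2\in\mathcal{O}(N^{-m})$, which — via the identification with the decomposition from \cref{theorem:nuclear_gramian} made at the end of that proof — is exactly the assertion for the given even $m$. Finally, if $f_i,c_i\in\C{\infty}{\bar{\Omega}}\cap\Lpw{\infty}{w}{\Omega}$, the regularity hypothesis holds for every even $m$, and since $\mathcal{O}(N^{-(m+1)})\subseteq\mathcal{O}(N^{-m})$ the decay $\sum_{i=N}^\infty\|p_i\|_H^2\in\mathcal{O}(N^{-m})$ follows for all $m\in\N$.

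The argument is essentially bookkeeping; the one place that deserves care is running the plain Sobolev estimate (which limits how often one may differentiate) and the weighted $\Lpw{\infty}{w}{\Omega}$ bound (which keeps the iterates inside $X^\ast$, where \cref{thm:spectral_decay} is stated) simultaneously through the induction, exploiting that $f_j$ carries the weight while its multiplier is merely plain-bounded. I would also note that, unlike in \cref{lemma:w1inf_dense}, no additional regularity of $w^{-1}$ is required here, precisely because the weight is never differentiated.
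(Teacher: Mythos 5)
Your proposal is correct and follows essentially the same route as the paper: an induction showing $(A^\ast)^k c_i \in W^{m-k,\infty}(\Omega)\cap\Lpw{\infty}{w}{\Omega}$, with the weight carried by $f$ via $\|f^\top\nabla\tilde c\|_{\Lpw{\infty}{w}{\Omega}}\le\|f\|_{\Lpw{\infty}{w}{\Omega}}\|\nabla\tilde c\|_{\Lp{\infty}{\Omega}}$, followed by an appeal to \cref{thm:spectral_decay}. Your version is slightly more careful about the product rule in $W^{m-k-1,\infty}(\Omega)$ and about reconciling the fixed even $m$ with the ``for all $m$'' phrasing, but the substance is identical.
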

\begin{proof}
	Let $\tilde{c} \in W^{ k , \infty}(\Omega)\cap \Lpw{\infty}{w}{\Omega}$ with $1\le k \le m$. Then $
		A^\ast \tilde{c} = f^\top \nabla \tilde{c} \;\in \; W^{ k -1 , \infty}(\Omega)$
	and furthermore
	\begin{align*}
		|(A^\ast \tilde{c} )(x)w(x)| = |f(x)^\top \nabla \tilde{c}(x) w(x)| \le \| f\|_{\Lpw{\infty}{w}{\Omega}} \| \nabla \tilde{c} \|_{\Lp{\infty}{\Omega}} < \infty.
	\end{align*}
	We conclude that $A^\ast \tilde{c} \in W^{k-1,\infty}(\Omega) \cap \Lpw{\infty}{w}{\Omega}$. Let us note that $(A^\ast)^0 c_i  = c_i \in \Lpw{\infty}{w}{\Omega}$ by assumption. Next set $\tilde{c} := \left( A^\ast \right)^{k-1} c_i$ for $1 \le k \le m$ and by recursion it follows
$		c_i \in \mathcal{D}((A^\ast)^m).
$	With \cref{thm:spectral_decay}, we obtain $
		\sum_{i=N}^\infty \| p_i \|_{H}^2 \in \mathcal{O}(N^{-m}).$
\end{proof}
\begin{remark}
This result indicates that the smoothness of $c_i$ should be compatible with the dynamics and that having more regularity is beneficial. Therefore, using $g_1(x) := \| x \|_2$ instead of $c_i(x) := x_i$ is a suboptimal choice, even though the observation operator $C$ has a lower rank of just one rather than $n$ and produces the same Lyapunov function.
\end{remark}

\begin{definition}
	We define the \emph{sum of squares solution} as
	\[
	v(x) := \sum_{i=1}^\infty p_i(x)^2  \qquad \text{for almost every $x \in \Omega$}
	\]
	where $p_i \in X^\ast$ are defined by the decomposition from \cref{theorem:nuclear_gramian}.
	\label{defi:sum_of_squares}
\end{definition}
Now we can show that the Lyapunov function can be recovered from the value bilinear form as a limit process using Dirac sequences.
\begin{theorem}
	If the preadjoint $S(t)$ of the composition semigroup from \cref{defi:lyapunov:composition_semigroup} is exponentially stable with rate $\omega_0$ then the Lyapunov function $v$ in \eqref{eq:cost_func} exists and it coincides with the sum of squares solution from 
	\cref{defi:sum_of_squares} almost everywhere. Furthermore, it holds $v \in \Lpw{\infty}{w^2}{\Omega}$.
	\label{theorem:sum_of_squares_is_normal}
\end{theorem}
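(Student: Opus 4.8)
The plan is to prove, in this order: that $v$ from \eqref{eq:cost_func} is finite almost everywhere (hence exists), that it lies in $\Lpw{\infty}{w^2}{\Omega}$, and finally that it agrees almost everywhere with $\tilde v:=\sum_i p_i^2$, the sum of squares solution. The first two of these follow from one direct weighted estimate. Writing $g(\Phi^t(z))=\sum_i (S^\ast(t)c_i)(z)^2$ and using $c_i\in X^\ast=\Lpw{\infty}{w}{\Omega}$, one gets $g(y)w(y)^2=\sum_i(c_i(y)w(y))^2\le\sum_i\|c_i\|_{X^\ast}^2=:K_0$ for almost every $y\in\Omega$, i.e.\ $g\,w^2\in\Lp{\infty}{\Omega}$ and the defining series for $g$ converges a.e. Squaring \cref{lemma:decay_bound} gives $w(z)^2\le e^{2\omega_0 t}w(\Phi^t(z))^2$, hence $g(\Phi^t(z))\,w(z)^2\le e^{2\omega_0 t}K_0$ for every $z$ whose trajectory at time $t$ misses the fixed null set $\{g\,w^2>K_0\}$. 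A Fubini argument — using that each $\Phi^{-t}$ maps null sets to null sets, after enlarging that set to a Borel null hull so that the relevant subset of $(0,\infty)\times\Omega$ is measurable — shows that for almost every $z$ this estimate holds for almost every $t$, so, since $\omega_0<0$,
\[
 v(z)\,w(z)^2=\int_0^\infty g(\Phi^t(z))\,w(z)^2\dint t\le K_0\int_0^\infty e^{2\omega_0 t}\dint t=\frac{K_0}{2|\omega_0|}<\infty .
\]
Joint measurability of $(t,z)\mapsto g(\Phi^t(z))$ and Tonelli's theorem make $v$ measurable, so $v$ exists and $v\in\Lpw{\infty}{w^2}{\Omega}$; since $\Omega$ is bounded this also gives $v\in\Lpw{1}{w^2}{\Omega}$.

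Next comes a trace identity. By Tonelli,
\[
 \int_\Omega v(z)\,w(z)^2\dint z=\int_0^\infty\sum_i\int_\Omega(S^\ast(t)c_i)(z)^2w(z)^2\dint z\dint t=\int_0^\infty\sum_i\|S^\ast(t)c_i\|_H^2\dint t ,
\]
which is finite by \cref{lem:lpw_embeddings} and \cref{lemma:nuclear_infinite_time_admissible}; on the other hand, the proof of \cref{theorem:nuclear_gramian} shows $\sum_i\int_0^\infty\|S^\ast(t)c_i\|_H^2\dint t=\sum_i\|p_i\|_H^2$. Hence, with $\tilde v=\sum_i p_i^2\in\Lpw{1}{w^2}{\Omega}$, one has $\int_\Omega v\,w^2\dint x=\int_\Omega\tilde v\,w^2\dint x$.

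Finally, the pointwise identification via a Dirac sequence. For $z\in\Omega$ and $n$ large enough that $B_{1/n}(z)\subset\Omega$, set $\rho_n:=\eta_{1/n}(\cdot-z)\,w^{-2}\in\Lp{\infty}{\Omega}\subset X$ with $\eta_\varepsilon$ the standard mollifier; then $\rho_n\ge0$, $\inner[X,X^\ast]{\rho_n}{\varphi}=\int_\Omega\eta_{1/n}(x-z)\varphi(x)\dint x$ for $\varphi\in X^\ast$, and $\|\rho_n\|_X\le\|w^{-1}\|_{\Lp{\infty}{\Omega}}$ uniformly in $n$. Then for almost every $z$ — a Lebesgue point of every $p_i$ and, in the $\Lp{2}{0,\infty}$-valued sense, of every map $x\mapsto(S^\ast(\cdot)c_i)(x)$ — exponential stability of $S^\ast$ on $X^\ast$ (so $\|S^\ast(t)c_i\|_{X^\ast}\le e^{\omega_0 t}\|c_i\|_{X^\ast}$) and dominated convergence with the $n$-independent, $t$-integrable majorant $\|w^{-1}\|_{\Lp{\infty}{\Omega}}^2 e^{2\omega_0 t}\sum_i\|c_i\|_{X^\ast}^2$ give
\[
 \inner[P]{\rho_n}{\rho_n}=\int_0^\infty\sum_i\inner[X,X^\ast]{\rho_n}{S^\ast(t)c_i}^2\dint t\;\longrightarrow\;\int_0^\infty\sum_i(S^\ast(t)c_i)(z)^2\dint t=v(z) .
\]
On the other hand $\inner[P]{\rho_n}{\rho_n}=\sum_i\inner[X,X^\ast]{\rho_n}{p_i}^2\ge\sum_{i\le N}\inner[X,X^\ast]{\rho_n}{p_i}^2\to\sum_{i\le N}p_i(z)^2$ for every $N\in\N$, so $v(z)\ge\sum_i p_i(z)^2=\tilde v(z)$ for almost every $z$. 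Since $v-\tilde v\ge0$ a.e., $w^2>0$ a.e., and $\int_\Omega(v-\tilde v)\,w^2\dint x=0$ by the trace identity, we conclude $v=\tilde v$ a.e., which finishes the proof.

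I expect the main obstacle to be the limit $\inner[P]{\rho_n}{\rho_n}\to v(z)$ for almost every $z$: it requires interchanging the Dirac limit with the time integral and the series over $i$, which I would justify through an $\Lp{2}{0,\infty}$-valued Lebesgue differentiation theorem applied to the bounded, strongly measurable maps $x\mapsto(S^\ast(\cdot)c_i)(x)$ together with dominated convergence powered by the exponential decay of $S^\ast(t)$ on $X^\ast$; the accompanying measure-theoretic bookkeeping (showing the exceptional sets in $t$ and in $z$ are null — the same kind of Fubini argument already needed for the $\Lpw{\infty}{w^2}{\Omega}$-bound) is routine but somewhat delicate. Everything else — the trace identity inherited from \cref{theorem:nuclear_gramian}, Fatou's inequality, and the observation that two nonnegative $\Lpw{1}{w^2}{\Omega}$-functions with equal integral, one pointwise dominating the other, must coincide a.e. — is then straightforward.
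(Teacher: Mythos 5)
Your proof is correct, and it reaches the identification $v=\sum_i p_i^2$ by a genuinely different route than the paper. Both arguments hinge on a Dirac sequence (your $\eta_{1/n}(\cdot-z)w^{-2}$ versus the paper's $\tilde\eta_{z,\varepsilon}=\eta_{\varepsilon}(\cdot-z)w^{-1}$, an immaterial renormalisation), and both show $\inner[P]{\rho_n}{\rho_n}\to v(z)$ (resp.\ $v(z)w(z)^2$) a.e.\ by dominated convergence with the exponential majorant; the same measure-theoretic bookkeeping for the $t$-dependent exceptional sets is needed in both, and the paper glosses over it just as you flag it. The difference lies in how that limit is matched with the sum of squares: the paper proves the two-sided identity in one stroke by showing $\inner[P]{\tilde\eta_{\cdot,\varepsilon}}{\tilde\eta_{\cdot,\varepsilon}}\to\sum_k(p_kw)^2$ in $\Lp{1}{\Omega}$ (Young's convolution inequality applied to $q_k^{(\varepsilon)}=\eta_\varepsilon\ast E(p_kw)$ together with the summable tail $\sum_{k>N}\|p_k\|_H^2$), and then identifies the a.e.\ limit with the $\Lp{1}{\Omega}$ limit. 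You instead establish only the one-sided pointwise inequality $v\ge\sum_i p_i^2$ by truncating the series (a Fatou-type step), and close the gap with the trace identity $\int_\Omega v\,w^2=\int_0^\infty\sum_i\|S^\ast(t)c_i\|_H^2\dint t=\sum_i\|p_i\|_H^2=\int_\Omega\bigl(\sum_i p_i^2\bigr)w^2$, which is indeed available from the Parseval computation inside the proof of \cref{theorem:nuclear_gramian}. Your route buys a softer convergence analysis in the identification step and delivers the existence of $v$ and the bound $v\in\Lpw{\infty}{w^2}{\Omega}$ up front via the direct estimate $g(\Phi^t(z))w(z)^2\le e^{2\omega_0 t}\sum_i\|c_i\|^2_{X^\ast}$, whereas the paper extracts that bound only at the very end from $\inner[P]{\tilde\eta_{z,\varepsilon}}{\tilde\eta_{z,\varepsilon}}\le K^2$; the price is the additional global Tonelli computation and the reliance on strict positivity of $w^2$ to convert the vanishing integral of $v-\sum_i p_i^2\ge 0$ into an a.e.\ identity. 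One cosmetic slip: $\Lp{\infty}{\Omega}\not\subseteq X$ in general (nothing forces $w\in\Lp{1}{\Omega}$), but this is harmless since you bound $\|\rho_n\|_X\le\|w^{-1}\|_{\Lp{\infty}{\Omega}}$ directly.
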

\begin{proof}
	We use the standard mollifier $\eta_{\varepsilon}$ from \cite[Chapter 4.4]{Eva98} and define $\tilde{\eta}_{z,\varepsilon} := \frac{1}{w}\eta_{\varepsilon}(\cdot-z) \big|_{\Omega}$. Keep in mind that $\eta_{\varepsilon}$ is normalized w.r.t the $\Lp{1}{\R^{d}}$-norm,  i.e., $\| \eta_{\varepsilon} \|_{\Lp{1}{\R^{d}}} = 1$ for all $\varepsilon > 0$ and therefore $\| \tilde{\eta}_{z,\varepsilon} \|_{\Lpw{1}{w}{\Omega}} \le 1$. 
	
	Since $h w$ for any $h \in X^\ast = \Lpw{\infty}{w}{\Omega} $ can be extended to a locally integrable function on $\R^n$, we can use  \cite[Appendix C, Theorem 7]{Eva98} to conclude that
	\begin{equation}
		\lim_{\varepsilon \rightarrow 0} \inner[X,X^\ast]{ \tilde{\eta}_{z,\varepsilon}}{h} = h(z)w(z) \qquad \text{for almost every $z \in \Omega$}.	
		\label{eq:l1_convergence}
	\end{equation}
	With the result from \cref{lemma:nuclear_infinite_time_admissible} the following term is bounded independently of $\varepsilon$
	\begin{align*}
			\inner[P]{\tilde{\eta}_{z,\varepsilon}}{\tilde{\eta}_{z,\varepsilon}}  = & \int_0^\infty \sum_{i=1}^\infty \inner[X,X^\ast]{\tilde{\eta}_{z,\varepsilon}}{S^\ast(t) c_i }^2 \dint t
			 \le   \int_0^\infty \sum_{i=1}^\infty \| S^\ast(t) c_i \|_{X^\ast}^2 \dint t \le  K^2  < \infty.
	\end{align*}
	Now for almost every $z \in \Omega$ it holds
	\begin{align*}
		&\lim_{\varepsilon \rightarrow 0} \inner[P]{\tilde{\eta}_{z,\varepsilon}}{\tilde{\eta}_{z,\varepsilon}} 
		=  \lim_{\varepsilon \rightarrow 0} \int_0^\infty \sum_{i=1}^\infty \inner[X,X^\ast]{\tilde{\eta}_{z,\varepsilon}}{S^\ast(t) c_i}^2  \dint t.  \\
		\intertext{Let us use the dominated convergence theorem \cite[A.3.21]{Alt12} with the bound derived earlier and  \eqref{eq:l1_convergence} to conclude }
		\lim_{\varepsilon \rightarrow 0} \inner[P]{\tilde{\eta}_{z,\varepsilon}}{\tilde{\eta}_{z,\varepsilon}} & =   \int_0^\infty  \sum_{i=1}^\infty  \lim_{\varepsilon \rightarrow 0}  \inner[X,X^\ast]{\tilde{\eta}_{z,\varepsilon}}{S^\ast(t) c_i}^2   \dint t \\&  =  \int_0^\infty  \sum_{i=1}^\infty  c_i( \Phi^t(z))^2 w(z)^2  \dint t = v(z) w(z)^2.
	\end{align*}
	Now we will identify the limit by the sum of squares solution. We start by defining
	\begin{align*}
		q^{(\varepsilon)}_k \colon \Omega \to \R, \quad z \mapsto \inner[X,X^\ast]{\tilde{\eta}_{z,\varepsilon}}{p_k}
	\end{align*}
	and the extension 
	\[
			E \colon \Lp{s}{\Omega} \to \Lp{s}{\R^{d}}, \quad E \phi := \left\{ \begin{array}{lcl}
			\phi(x) & \qquad & x \in \Omega\\
			0 & \qquad & \text{else}
		\end{array}\right. 
	\]
	for any $1 \le s \le \infty$. We observe that for $z \in \Omega$
	\begin{align*}
		q^{(\varepsilon)}_k(z) = \int_{\Omega} \frac{1}{w(x)}\eta_\varepsilon(x - z ) p_k(x) w(x)^2 \dint x = \left( E \left( p_k w \right) \ast \eta_\varepsilon \right)(z).
	\end{align*}
	With Young's convolution inequality \cite[Section 4.13]{Alt12}, we can show 
	\begin{align*}
		\| q^{(\varepsilon)}_k \|_{\Lp{2}{\Omega}} \le \| \eta_{\varepsilon} \|_{\Lp{1}{\R^{d}}} \| p_k w \|_{\Lp{2}{\Omega}} = \| p_k \|_{\Lpw{2}{w^2}{\Omega}}
	\end{align*}
	and therefore for any $k \in \N$ and with the Hölder inequality \cite[Lemma 3.18]{Alt12} and the result from \cite[Theorem 7, Appendix C]{Eva98} we get
	\begin{align*}
		\| q^{(\varepsilon)}_k(\cdot)^2 - (p_k w)(\cdot)^2 \|_{\Lp{1}{\Omega}} = & \int_\Omega | q^{(\varepsilon)}_k(z)^2 - p_k(z)^2 w(z)^2 | \dint z\\
		= & \int_\Omega | ( q^{(\varepsilon)}_k(z) - p_k(z) w(z) )( q^{(\varepsilon)}_k(z) + p_k(z)w(z) ) |  \dint z\\
		\le & 2  \| q^{(\varepsilon)}_k - p_k w \|_{\Lp{2}{\Omega}} \| p_k \|_{\Lpw{2}{w^2}{\Omega}} \underset{\varepsilon \rightarrow 0}{\rightarrow} 0.
	\end{align*}
	We conclude for arbitrary $N \in \N$ that
	\begin{align*}
		  & \lim_{\varepsilon \rightarrow 0} \left\| \inner[P]{\tilde{\eta}_{\cdot,\varepsilon} }{\tilde{\eta}_{\cdot,\varepsilon} } - \sum_{k=1}^\infty (p_k w)(\cdot)^2 \right\|_{\Lp{1}{\Omega}}
		=  \lim_{\varepsilon \rightarrow 0} \left\| \sum_{k=1}^\infty q^{(\varepsilon)}_k(\cdot)^2  - \sum_{k=1}^\infty (p_k w)(\cdot)^2 \right\|_{\Lp{1}{\Omega}} \\
		\le & \lim_{\varepsilon \rightarrow 0}  \left( \sum_{k=1}^N  \left\| q^{(\varepsilon)}_k(\cdot)^2 - (p_k w)(\cdot)^2 \right\|_{\Lp{1}{\Omega}} +  \sum_{k=N+1}^\infty \|
		 q^{(\varepsilon)}_k \|^2_{\Lp{2}{\Omega}} +  \sum_{k=N+1}^\infty \| p_k \|^2_{\Lpw{2}{w}{\Omega}} \right)\\
		\le &   \sum_{k=1}^N  \lim_{\varepsilon \rightarrow 0} \left\| q^{(\varepsilon)}_k(\cdot)^2 - (p_k w)(\cdot)^2 \right\|_{\Lp{1}{\Omega}} +  2 \sum_{k=N+1}^\infty \| p_k \|^2_{\Lpw{2}{w}{\Omega}} 	=  2 \sum_{k=N+1}^\infty \| p_k \|^2_{\Lpw{2}{w}{\Omega}} 	 .
	\end{align*}
	Since $\sum_{k=1}^\infty \|p_k\|_{\Lpw{2}{w}{\Omega}}^2 < \infty$ and $N$ was arbitrary, it follows that 
	\[
	\lim_{\varepsilon \rightarrow 0} \| \inner[P]{\tilde{\eta}_{\cdot ,\varepsilon} }{\tilde{\eta}_{\cdot,\varepsilon} } - \sum_{k=1}^\infty (p_k w)(\cdot)^2 \|_{\Lp{1}{\Omega}} = 0.
	\] 
	Since $\infty > w(z) > 0$ almost everywhere, we obtain $v(z) = \sum_{i=1}^\infty p_i(z)^2$ almost everywhere on $\Omega$. The last inequality can be shown using the boundedness of $\einner{P}$
	\[
		\esssup_{z \in \Omega} w(z)^2 |v(z)| \le \esssup_{ z \in \Omega} \limsup_{\varepsilon \rightarrow 0} \inner[P]{\tilde{\eta}_{z,\varepsilon}}{ \tilde{\eta}_{z,\varepsilon} } \le  K^2 .
	\]
	with the bound from \cref{theorem:nuclear_gramian} \ref{theorem:nuclear_gramian:enum:bounded}. It follows $v \in \Lpw{\infty}{w^2}{\Omega}$.
\end{proof}
 
\begin{remark}
	Note that we do not need to assume that the limit for any of the trajectories $\lim\limits_{t \rightarrow \infty} \Phi^t(z)$ exists and that the sum of squares solution is independent of the decomposition we choose.
\end{remark}

	\section{An operator Lyapunov formulation}	
	\label{sec:op_lyap}
	
For linear systems with quadratic costs the Lyapunov function from \cref{eq:cost_func} is often computed by solving the algebraic Lyapunov equation \eqref{eq:lyap_fin_dim}. The following result will give a similar characterization for nonlinear systems by an infinite-dimensional operator Lyapunov equation.

\begin{theorem}\label{thm:operator_lyap}
	If the semigroup $S(t)$ is exponentially stable over $X$ then the value bilinear form from \cref{defi:value_bilinear_form} is the unique extension of the minimal solution of the operator Lyapunov equation over $H$
	\[
		\inner[P]{A \phi}{\psi } + \inner[P]{\phi}{ A \psi} + \inner[\ell^2]{C \phi}{ C \psi} = 0 \qquad \forall \phi,\psi \in \mathcal{D}(A) \subseteq H.
		\label{eq:op_lyapunov}
	\]
\end{theorem}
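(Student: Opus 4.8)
The plan is to run the classical argument from the theory of well-posed linear systems (see, e.g., \cite{CurZ95}) in the present three-space setting, where $H=\Lpw{2}{w^2}{\Omega}$ embeds densely and continuously into $X=\Lpw{1}{w}{\Omega}$ by \cref{lem:lpw_embeddings}, where $C$ is bounded from $X$ to $\ell^2$ (because $\sum_i\|c_i\|_{X^\ast}^2<\infty$) and hence also from $H$ to $\ell^2$, and where $S(t)$ restricted to $H$ is a strongly continuous semigroup with generator $A$. We understand a \emph{solution} of \eqref{eq:op_lyapunov} to be a bounded self-adjoint nonnegative operator $Q\in\mathcal{L}(H)$ satisfying the displayed identity, and ``minimal'' is meant with respect to the Loewner order.

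\textbf{Step 1: the value bilinear form solves \eqref{eq:op_lyapunov}.} Fix $\phi,\psi\in\mathcal{D}(A)$. Then $t\mapsto S(t)\phi$ is continuously differentiable into $H$ with $\tfrac{\mathrm{d}}{\mathrm{d}t}S(t)\phi=AS(t)\phi=S(t)A\phi\in H\subseteq X$, so, $C$ being bounded, $t\mapsto CS(t)\phi$ is continuously differentiable into $\ell^2$ with derivative $CS(t)A\phi$, and likewise for $\psi$. Differentiating $g(t):=\inner[\ell^2]{CS(t)\phi}{CS(t)\psi}$ and integrating over $[0,T]$ yields
\[
\inner[\ell^2]{CS(T)\phi}{CS(T)\psi}-\inner[\ell^2]{C\phi}{C\psi}=\int_0^T\Big(\inner[\ell^2]{CS(t)A\phi}{CS(t)\psi}+\inner[\ell^2]{CS(t)\phi}{CS(t)A\psi}\Big)\dint t.
\]
Letting $T\to\infty$, the boundary term on the left tends to $0$, since $\|CS(T)\phi\|_{\ell^2}\le\|C\|_{\mathcal{L}(X,\ell^2)}\,\|S(T)\phi\|_X\le\|C\|_{\mathcal{L}(X,\ell^2)}\exp(\omega_0T)\|\phi\|_X\to0$ by \cref{thm:exp_stab} (recall $\omega_0<0$), while the integrand on the right is bounded by $\|C\|_{\mathcal{L}(X,\ell^2)}^2\exp(2\omega_0t)\|A\phi\|_X\|\psi\|_X\in\mathcal{O}(\exp(2\omega_0t))$, so the integral converges absolutely to $\inner[P]{A\phi}{\psi}+\inner[P]{\phi}{A\psi}$ by \eqref{defi:value_bilinear_form}. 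This is exactly \eqref{eq:op_lyapunov}.

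\textbf{Step 2: minimality over $H$.} By \cref{theorem:nuclear_gramian}\,\ref{theorem:nuclear_gramian:enum:bounded} the form $\einner{P}$ is bounded on $X$, hence (using $\|\cdot\|_X\le C(\Omega)\|\cdot\|_H$ on $H$) bounded on $H$, so it is represented by an operator $P=P^\ast\ge0$ in $\mathcal{L}(H)$, which by Step 1 solves \eqref{eq:op_lyapunov}. Let $Q=Q^\ast\ge0$ in $\mathcal{L}(H)$ be any solution. For $\phi\in\mathcal{D}(A)$ one has $S(t)\phi\in\mathcal{D}(A)$, so inserting $S(t)\phi$ into \eqref{eq:op_lyapunov} gives $\tfrac{\mathrm{d}}{\mathrm{d}t}\inner[H]{QS(t)\phi}{S(t)\phi}=-\|CS(t)\phi\|_{\ell^2}^2$; integrating over $[0,T]$ and using $Q\ge0$,
\[
\inner[H]{Q\phi}{\phi}=\inner[H]{QS(T)\phi}{S(T)\phi}+\int_0^T\|CS(t)\phi\|_{\ell^2}^2\dint t\ \ge\ \int_0^T\|CS(t)\phi\|_{\ell^2}^2\dint t.
\]
Sending $T\to\infty$ yields $\inner[H]{Q\phi}{\phi}\ge\inner[P]{\phi}{\phi}=\inner[H]{P\phi}{\phi}$, and by density of $\mathcal{D}(A)$ in $H$ and continuity of both forms this holds for all $\phi\in H$, i.e.\ $P\le Q$. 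Hence $P$ is the (necessarily unique) minimal solution of \eqref{eq:op_lyapunov} over $H$.

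\textbf{Step 3: uniqueness of the extension.} Since $H$ is dense in $X$ by \cref{lem:lpw_embeddings} and $\einner{P}$ is continuous on $X\times X$, any bounded bilinear form on $X$ that agrees with the minimal solution $P$ on $H\times H$ coincides with $\einner{P}$ on all of $X\times X$; thus the value bilinear form is the unique such extension. The main obstacle is not a single hard estimate—each step is a routine Lyapunov-type computation—but rather the bookkeeping across the three intertwined spaces: one must use exponential stability \emph{only over $X$} (it genuinely fails over $H$, cf.\ the remark following \cref{thm:exp_stab}) to discard the boundary term in Step 1, while simultaneously exploiting the Hilbert space structure of $H$ and the embedding $H\hookrightarrow X$ to give meaning to ``minimal solution'' and to represent $\einner{P}$ by a bounded nonnegative operator.
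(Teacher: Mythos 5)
Your proposal is correct, and its overall skeleton matches the paper's: first obtain the (unique) minimal solution over $H$, then extend it uniquely to $X$ using the density of $H$ in $X$ (\cref{lem:lpw_embeddings}) together with the $X$-boundedness of $\einner{P}$ from \cref{theorem:nuclear_gramian}. The difference is in how the $H$-part is handled. The paper observes that \cref{lemma:nuclear_infinite_time_admissible} plus the embedding $X^\ast\subseteq H$ makes $C$ infinite-time admissible for $S(t)$ over $H$, and then simply invokes the Tucsnak--Weiss result \cite[Theorem 5.1.1]{TucW09} to get existence, minimality, and the identification with $\einner{P}$ in one stroke. You instead unpack that citation: your Steps 1 and 2 are precisely the classical differentiate-and-integrate argument that proves the cited theorem, carried out by hand, with the correct care that the boundary term and the integrability of the integrand are controlled by exponential stability over $X$ (not $H$) via $\|CS(t)\phi\|_{\ell^2}\le\|C\|_{\mathcal{L}(X,\ell^2)}\|S(t)\phi\|_X$. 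What the paper's route buys is brevity and a precise meaning of ``minimal solution'' inherited from the reference; what your route buys is self-containedness and an explicit demonstration that only $X$-stability is needed where the standard Hilbert-space theory would ask for stability (or at least admissibility) over $H$. Two cosmetic points: your bound on the integrand in Step 1 should also include the term $\|\phi\|_X\|A\psi\|_X$, and in Step 2 it is worth stating explicitly that $\inner[P]{\phi}{\phi}=\int_0^\infty\|CS(t)\phi\|^2_{\ell^2}\dint t$ is exactly the limit of the right-hand side, which is how $P\le Q$ follows. Neither affects correctness.
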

\begin{proof}
	From \cref{lemma:nuclear_infinite_time_admissible} and the embedding $X^\ast \subseteq H$ it follows that $C$ is infinite time admissible \cite[Definition 4.6.1]{TucW09} for $S(t)$ over $H$. By the result from \cite[Theorem 5.1.1]{TucW09} there exists a unique minimal solution $\einner{P_H} \colon H \times H \to \R$ that coincides with $\einner{P}$ on $H$. But $H$ is dense in $X$ by \cref{lem:lpw_embeddings} and $\einner{P_H}$ is also bounded w.r.t.~$\| \cdot \|_X$ by \cref{theorem:nuclear_gramian}. Therefore, by the continuous linear extension theorem \cite[E4.18]{Alt12} there exists a unique bounded linear extension to $X$, which is $\einner{P}$. 
	\label{lemma:op_lyapunov}
\end{proof}
In \cref{theorem:sum_of_squares_is_normal} we showed that the Lyapunov function $v$ exists if the semigroup is exponentially stable. If we assume that the Lyapunov function exists and satisfies some additional assumptions and the dynamic is dominated by a stable linear term around the origin, we obtain a converse implication.
\begin{proposition}
\label{proposition:exponential_decay_stable_in_origin}
	Let $f(x) = A x + \tilde{f}(x)$ be the dynamic of the system. Let us assume that the following conditions are fulfilled:
	\begin{enumerate}[label=(\roman*)]
		\item $\Real(\lambda_i(A)) < 0$ for all eigenvalues $\lambda_i(A)$ of $A$.
		\label{cond:negative_real_part}
		\item $f$ fulfills the tangent condition \cref{eq:tan_con}, i.e., $f(x)^\top  \nu(x) \le 0$ for all $x \in \partial \Omega$. 
		\item The Lyapunov function to the cost $g(x) := \| x \|^2$ exists and satisfies
		\[
		v( z ) := \int_{0}^\infty \| \Phi^t(z) \|^2 \dint t <  \infty \quad \text{for all} \; z \in \bar{\Omega}, \quad v^{1/2} \in W^{1,\infty}(\Omega).
		\]
	\end{enumerate}
	Then  $S^\ast(t) \colon \Lpw{\infty}{w}{\Omega} \to \Lpw{\infty}{w}{\Omega}$ is exponentially stable w.r.t.~$w(x) := \frac{1}{\|x\|}$.
\end{proposition}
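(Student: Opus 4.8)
The plan is to reduce the claim to a uniform, eventually exponential contraction estimate for the flow and then invoke the explicit norm bound obtained in the proof of \cref{theorem:lyapunov:semigroup_property}. First one checks that $w(x)=1/\|x\|$ fulfils \eqref{eq:weighting} and \cref{assumption:comp_weight}: $w^{-1}(x)=\|x\|$ is Lipschitz on the bounded set $\Omega$; since $f\in\C{1}{\bar{\Omega}}$ vanishes at the equilibrium $0$ with $\tilde f(x)=o(\|x\|)$, the quotient $f(x)^\top x/\|x\|^2$ is essentially bounded (its limit as $x\to0$ is governed by the symmetric part of $A$), so $f_i/\|x\|\in\Lpw{\infty}{w}{\Omega}$ and $\omega_0:=\esssup_{x\in\Omega}(-f(x)^\top\nabla w(x)/w(x))=\esssup_{x\in\Omega}f(x)^\top x/\|x\|^2<\infty$. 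Hence $S^\ast(t)$ is a well-defined weak-* continuous semigroup; note that $\omega_0$ may be positive for non-normal $A$, so \cref{thm:exp_stab} does not apply directly. Specializing the estimate in the proof of \cref{theorem:lyapunov:semigroup_property} to this weight gives
\[
\|S^\ast(t)\varphi\|_{\Lpw{\infty}{w}{\Omega}}\le\Big(\esssup_{z\in\Omega}\tfrac{w(z)}{w(\Phi^t(z))}\Big)\|\varphi\|_{\Lpw{\infty}{w}{\Omega}}=\Big(\esssup_{z\in\Omega}\tfrac{\|\Phi^t(z)\|}{\|z\|}\Big)\|\varphi\|_{\Lpw{\infty}{w}{\Omega}},
\]
so it suffices to prove $g(t):=\esssup_{z\in\Omega}\|\Phi^t(z)\|/\|z\|\le Me^{-\alpha t}$ for some $M\ge1,\ \alpha>0$.

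Next I would establish local exponential stability at the origin. By (i) $A$ is Hurwitz, so there is $Q=Q^\top>0$ with $A^\top Q+QA=-I$. Using $f(x)=Ax+\tilde f(x)$ with $\tilde f(x)=o(\|x\|)$ one gets, along trajectories,
\[
\tfrac{\mathrm{d}}{\mathrm{d}t}\big(\Phi^t(z)^\top Q\,\Phi^t(z)\big)=-\|\Phi^t(z)\|^2+2\,\Phi^t(z)^\top Q\,\tilde f(\Phi^t(z))\le-\tfrac12\|\Phi^t(z)\|^2
\]
as long as $\Phi^t(z)$ stays in a ball $B_r(0)$ small enough that $2\|Q\|\,\|\tilde f(y)\|\le\tfrac12\|y\|$ there. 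Choosing $c>0$ so that $\bar U\subseteq B_r(0)$ for $U:=\{x:x^\top Qx<c\}$, the inequality on $\partial U$ shows $U$ is forward invariant under the nonlinear flow, and Gronwall's lemma on $U$ yields $M_1\ge1,\ \alpha_1>0$ with $\|\Phi^t(z)\|\le M_1e^{-\alpha_1 t}\|z\|$ for all $z\in U,\ t\ge0$. In particular the part of $g(t)$ coming from $z\in U$ is $\le M_1e^{-\alpha_1 t}$.

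For the part coming from $z\in\Omega\setminus U$ I would use (iii) to get a uniform entry time into $U$. Since $v^{1/2}\in W^{1,\infty}(\Omega)$ is bounded, $v(z)\le C^2$ for a.e.\ $z$; because of the flow-invariance of $\bar\Omega$ (condition (ii)) and the semigroup property of $\Phi$ one has $v(\Phi^t(z))=\int_t^\infty\|\Phi^s(z)\|^2\dint s=v(z)-\int_0^t\|\Phi^s(z)\|^2\dint s$, so $t\mapsto v(\Phi^t(z))$ is nonincreasing and $\int_0^\infty\|\Phi^s(z)\|^2\dint s\le v(z)\le C^2$ uniformly in $z$. Fix $\delta>0$ with $B_\delta(0)\subseteq U$ and put $T:=2C^2/\delta^2$; if $\|\Phi^s(z)\|\ge\delta$ for all $s\in[0,T]$ then $\int_0^T\|\Phi^s(z)\|^2\dint s\ge T\delta^2>C^2$, a contradiction, so for a.e.\ $z$ there is $t^\ast\le T$ with $\Phi^{t^\ast}(z)\in B_\delta(0)\subseteq U$, and by forward invariance $\Phi^t(z)\in U$ for all $t\ge t^\ast$, in particular $\Phi^T(z)\in U$.

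Finally I would combine the two regimes. For $z\in\Omega\setminus U$ one has $z^\top Qz\ge c$, hence $\|z\|\ge\delta_2:=(c/\|Q\|)^{1/2}>0$; for $t\ge T$, writing $\Phi^t(z)=\Phi^{t-T}(\Phi^T(z))$ with $\Phi^T(z)\in U$ gives $\|\Phi^t(z)\|\le M_1e^{-\alpha_1(t-T)}\sup_{y\in\bar\Omega}\|y\|$, while for $t\in[0,T]$ one simply has $\|\Phi^t(z)\|/\|z\|\le\sup_{y\in\bar\Omega}\|y\|/\delta_2$; both give $\|\Phi^t(z)\|/\|z\|\le C_1e^{-\alpha_1 t}$ after absorbing the factor $e^{\alpha_1 T}$. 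Together with the bound $M_1e^{-\alpha_1 t}$ on $U$ this yields $g(t)\le Me^{-\alpha_1 t}$ for all $t\ge0$, which is exponential stability. The main obstacle is the uniform entry-time step: it is powered by the \emph{uniform} bound $\sup_z\int_0^\infty\|\Phi^s(z)\|^2\dint s\le\|v^{1/2}\|_{W^{1,\infty}(\Omega)}^2<\infty$ — which is precisely where the regularity hypothesis on $v^{1/2}$ enters — paired with the forward invariance of the local sublevel set $U$, so that an excursion into $U$ cannot be undone. A minor point is that passing from these pointwise trajectory estimates to the essential supremum defining $\|S^\ast(t)\|$ is legitimate because $\Phi^t$ is non-singular, hence preimages of null sets are null.
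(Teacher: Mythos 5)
Your proof is correct, but it follows a genuinely different route from the paper's. The paper stays at the level of weights and its own operator machinery: it first builds the local weight $\|x\|_X^{-1}$ from the Lyapunov equation $A^\top X + XA + I = 0$, applies \cref{thm:exp_stab} on a small ball $B_r(0)$ and invokes \cref{theorem:sum_of_squares_is_normal} to identify $v$ with the sum-of-squares solution there, deducing $\esssup_\Omega v(x)/\|x\|^2 < \infty$; it then takes $w := v^{-1/2}$ as a \emph{global} weight, uses the transport identity $f^\top \nabla v = -\|x\|^2$ from \eqref{eq:lyap_pde} to verify \eqref{eq:l1_dissipative} for this $w$, applies \cref{thm:exp_stab} a second time, and finally transfers the decay to the weight $1/\|x\|$ by a norm comparison. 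You instead argue entirely at the trajectory level: the reduction $\|S^\ast(t)\|_{\mathcal{L}(\Lpw{\infty}{w}{\Omega})} \le \esssup_z \|\Phi^t(z)\|/\|z\|$, a classical quadratic-Lyapunov-function argument giving local exponential stability on a forward-invariant sublevel set $U$, a uniform entry-time bound into $U$ powered by $\sup_z \int_0^\infty \|\Phi^s(z)\|^2 \dint s \le \|v^{1/2}\|_{\mathrm{L}^\infty}^2$, and a patching of the two regimes. What each buys: the paper's route recycles its semigroup results and exhibits the conceptually pleasing fact that $v^{-1/2}$ is itself an admissible contractive weight, but its last step only bounds the $\Lpw{1}{\bar w}{\Omega}$-norm by the $\Lpw{1}{w}{\Omega}$-norm in one direction; your argument controls the operator norm directly in the target weight $1/\|x\|$, is more elementary and self-contained, and in fact only uses $v \in \mathrm{L}^\infty(\Omega)$ rather than the full $v^{1/2} \in W^{1,\infty}(\Omega)$ hypothesis (which the paper needs to make $v^{-1/2}$ an admissible weight in the sense of \eqref{eq:weighting}) — so it is slightly more general in that respect. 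Both proofs share the implicit assumptions that $0 \in \Omega$ and that $\tilde f$ is the superlinear remainder of $f$ at the origin, and both handle the passage from pointwise trajectory bounds to essential suprema via the non-singularity of $\Phi^t$, so no gap there.
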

\begin{proof}
	By assumption \ref{cond:negative_real_part} there exists a positive definite matrix $X$ such that
	\[
		A^\top X + X A + I_{d \times d} = 0.
	\]
	In the following step, we verify that the assumptions of \cref{thm:exp_stab} are fulfilled for the weighting $\tilde{w}(x) := \|x \|^{-1}_X$ with $\|x\|_X:=\sqrt{x^\top X x}$ and the domain $\Omega := B_r(0) \subseteq \Omega$ for some $r > 0$ small enough. For this purpose, we will focus on
	\begin{align*}
		 & - \frac{f(x)^\top \nabla \tilde{w}(x) }{ \tilde{w}(x) } = \frac{ x^\top A^\top X x + \tilde{f}(x)^\top X x}{2 \| x \|^2_X }
	\end{align*}
	where we used $\nabla \| x \|^{-1}_X = -\frac{Xx}{\|x\|^{3/2}_X}$ for $x \neq 0$. First let us consider the nonlinear part. Let $X^{1/2}$ be the matrix square root of $X$, then it holds
	\begin{equation}
		\frac{| \tilde{f}(x)^\top X x| }{ 2 \|x \|_X^2} \le \frac{\|X^{1/2}\tilde{f}(x)\|_2 }{\|x\|_X} \frac{\| X^{1/2} x \|_2 } {2 \|x \|_X} \le \frac{\lambda_{\text{max}}(X)}{2 \lambda_{\text{min}}(X)} \frac{\|\tilde{f}(x)\|_2}{\|x\|_2}\in \mathcal{O}(\|x\|_2).
		\label{eq:non_linear_part}
	\end{equation}
	Since $X$ solves the algebraic Lyapunov equation, for the linear part we obtain
	\begin{align}
	\frac{ x^\top A^\top X x}{2 \| x \|^2_X }	= & \frac{ x^\top \left( A^\top X + X A \right) x }{ 4 x^\top X x } \le \frac{ - \| x\|^2 }{4 x^\top X x } 
		\le - \frac{1}{4\lambda_{\text{max}}(X)} < 0.
		\label{eq:linear_part}
	\end{align}
	By combining \cref{eq:non_linear_part} and \cref{eq:linear_part}  we can choose $C,r > 0$ such that
	\[
		 \esssup_{x \in B_r(0)} \; - \frac{f(x)^\top \nabla \| x\|_X }{\|x \|_X } \le -C \lambda_{\text{max}}(X)^{-1} \quad \text{and} \quad f(x)^\top \nu(x) \le 0 \; \text{for} \; x \in \partial B_r(0) 
	\]
	where $\nu(x) = \frac{x}{\|x\|}$. Therefore the assumptions of \cref{thm:exp_stab} are fulfilled and with  \cref{theorem:sum_of_squares_is_normal} the sum of squares solution coincides with $v \big|_{B_r(0)}$. In fact, we even have $v\big|_{B_r(0)} \in \Lpw{\infty}{\tilde{w}^2}{B_r(0)}$ which implies  
	\[
		\esssup_{x \in B_r(0) } \frac{v(x)}{\|x\|^2} \le \esssup_{x \in B_r(0)}\frac{ \lambda_{\text{max}}(X)  v(x)}{ \|x\|^2_X} < \infty.
	\]
	On the other hand for $x \in \Omega \setminus B_r(0)$ the expression $\|x\|^2$ is bounded from below and $\esssup_{x \in \Omega} v(x) < \infty$ by assumption such that 
	\begin{equation}
		\esssup_{x \in\Omega} \frac{v(x)}{\|x\|^2}  < \infty.
		\label{eq:bound_v_norm}
	\end{equation}
	If we define $w(x) := v(x)^{-1/2}$, then with \cref{eq:lyap_pde} it holds that
	\[
		\esssup_{x \in \Omega} -\frac{f(x)^\top \nabla w(x)}{w(x)} = \esssup_{x \in \Omega} \frac{1}{2}\underbrace{ f(x)^\top \nabla v(x)}_{ = -\| x \| ^2 } v(x)^{-1} = 
			\esssup_{x \in \Omega} -\frac{1}{2}\frac{\| xf \|^2 }{v(x) } < 0.
	\]
	From the tangent condition $f(x)^\top \nu(x) \le 0$ and with \cref{thm:exp_stab} the semigroup $S(t) \colon \Lpw{1}{w}{\Omega} \to \Lpw{1}{w}{\Omega}$ is exponentially stable, i.e.,
	\[
		\| S(t) \phi \|_{\Lpw{1}{w}{\Omega}} \le C \exp( \omega_0 t )\| \phi\|_{\Lpw{1}{w}{\Omega}} \quad \text{for some} \; \omega_0 < 0.
	\]
	This however means that $S(t) \colon \Lpw{1}{\bar{w}}{\Omega} \to \Lpw{1}{\tilde{w}}{\Omega}$ with $\bar{w}(x) := \frac{1}{\|x\|}$ is also exponentially stable  since we can bound the norm w.r.t $\bar{w}$ by the norm w.r.t $w$ via 
	\[	
		\| \phi \|_{\Lpw{1}{\bar{w}}{\Omega}} = \int_{\Omega} \frac{| \phi(x) |}{\|x\|} \dint x \le \underbrace{\esssup_{x \in \Omega} \frac{v^{1/2}(x)}{\|x\|}}_{ < \infty \; \text{by} \; \eqref{eq:bound_v_norm}} \int_{\Omega} |\phi(x)| v^{-1/2}(x) \dint x = \tilde{C} \| \phi\|_{\Lpw{1}{w}{\Omega}}.
	\]    
\end{proof}

	\section{Numerical proof of concept}
	\label{sec:numerics}
	
	In this section, we briefly validate our theoretical findings by two small-scale numerical examples. Let us emphasize that our purpose is to demonstrate the potential of the rapidly decaying eigenvalues of the solution of the resulting matrix Lyapunov equation for numerical methods. In particular, we believe that it could establish a way for efficient tensor-based low rank solvers for large-scale Lyapunov functions, e.g., arising throughout the policy iteration for optimal feedback computations. A detailed treatise is however out of the scope of this manuscript and is subject of ongoing research.
	
	Here, we restrict ourselves to a simple two-dimensional setup based on a polynomial tensor basis and a straightforward discretization. In more detail, the discretization relies on Legendre polynomials or splines that are orthonormalized with respect to the $\Lpw{2}{w^2}{\Omega}$ norm using Gauss-Legendre quadrature and an eigendecomposition. The infinitesimal generator is discretized as a matrix and the resulting algebraic Lyapunov equation was solved using the  built-in method \verb|solve_continuous_lyapunov| from Scipy. The implementation can be downloaded\footnote{\url{https://git.tu-berlin.de/bhoeveler/koopman-based-operator-lyapunov}} and was done using Python version 3.9.15, TensorFlow version 2.11.0, Scipy version 1.8.1, and Numpy version 1.22.4. All simulations were conducted on a desktop computer equipped with an AMD R9 3900X processor, 64 GB of RAM and a Radeon VII graphics card.
	
	\subsection{A linear quadratic problem}
	
	We begin with linear (dissipative) dynamics and a quadratic cost function  over the domain $\Omega = [-1,1]^2$, i.e.,
	\[
		f(x_1,x_2) :=A_m \begin{pmatrix} x_1\\ x_2\end{pmatrix}= \begin{pmatrix}
			-2 & 1\\
			-1 & -3
		\end{pmatrix}\begin{pmatrix} x_1\\ x_2\end{pmatrix}  \quad \text{and} \quad g(x) := c_1(x)^2 + c_2(x)^2
	\]
	 with $c_i(x) := x_i$. The weighting is chosen as $w(x) := \| x \|^{-1}$. With regard to the compatibility of $f$ and $w$, note that the tangent condition is fulfilled, $f_i \in \Lpw{\infty}{w}{\Omega}$ and furthermore
	\[
		\omega_0 := \sup_{x} -\frac{f(x)^\top \nabla w(x)}{w(x)} = \sup_{x} \frac{x^\top A_m x }{\|x\|^2} = \lambda_{\text{max}} \left( \frac{1}{2}( A_m + A_m^\top ) \right) < 0.
	\]
	  By \cref{thm:exp_stab} the corresponding semigroup is an exponentially stable semigroup of contractions and furthermore, the assumptions of \cref{thm:spectral_decay_cinfty} are fulfilled for all $m \in \N$ leading to a super-polynomial decay.
	In this specific case, it is easy to show that the eigenfunctions $p_i$ of $P$ are linear and that their representation as elements of $\R^2$ is a decomposition of the solution $X$ to the algebraic Lyapunov equation. In other words, one can show that the solution $P$ to the operator Lyapunov equation is of finite rank of at most $n=2$. This theoretical result is numerically confirmed by the eigenfunctions $p_i$ and eigenvalues in \cref{fig:linear_eigenfunctions} of $P$ where only the two largest eigenvalues are (numerically) non-zero and both of them correspond to linear eigenfunctions. \Cref{fig:linear_lyapunov_function} shows that the error between the calculated sum of squares solution and the reference solution, obtained by solving the matrix-valued Lyapunov equation, is approximately $10^{-13}$. The spiking behavior of the error in the corners of the domain seems to be caused by numerical instabilities of the Legendre polynomials which had a degree of up to 11 in this case.
	\begin{figure}[h]
		\centering
		\begin{minipage}{.45 \textwidth}
		\begin{tikzpicture}
			\begin{semilogyaxis}[    
					xlabel=Index $i$,    
					ylabel=$\| p_i \|^2_{H}$,					
				    legend pos=north east,
				    width=\textwidth,
				]
							
				\addplot[x=x, y=y, color=blue,mark=o, mark options={draw=blue,fill=white}] table  {
						x y 
						1 1.22804160e-01 
						2 8.43386971e-02 
						3 2.00059334e-17 
						4 1.67252373e-17 
						5 1.59431207e-17 
						6 1.25826413e-17 
				};											
			\end{semilogyaxis} 
		\end{tikzpicture}
	\end{minipage}
	\begin{minipage}{.45 \textwidth}
		\includegraphics[width = \textwidth]{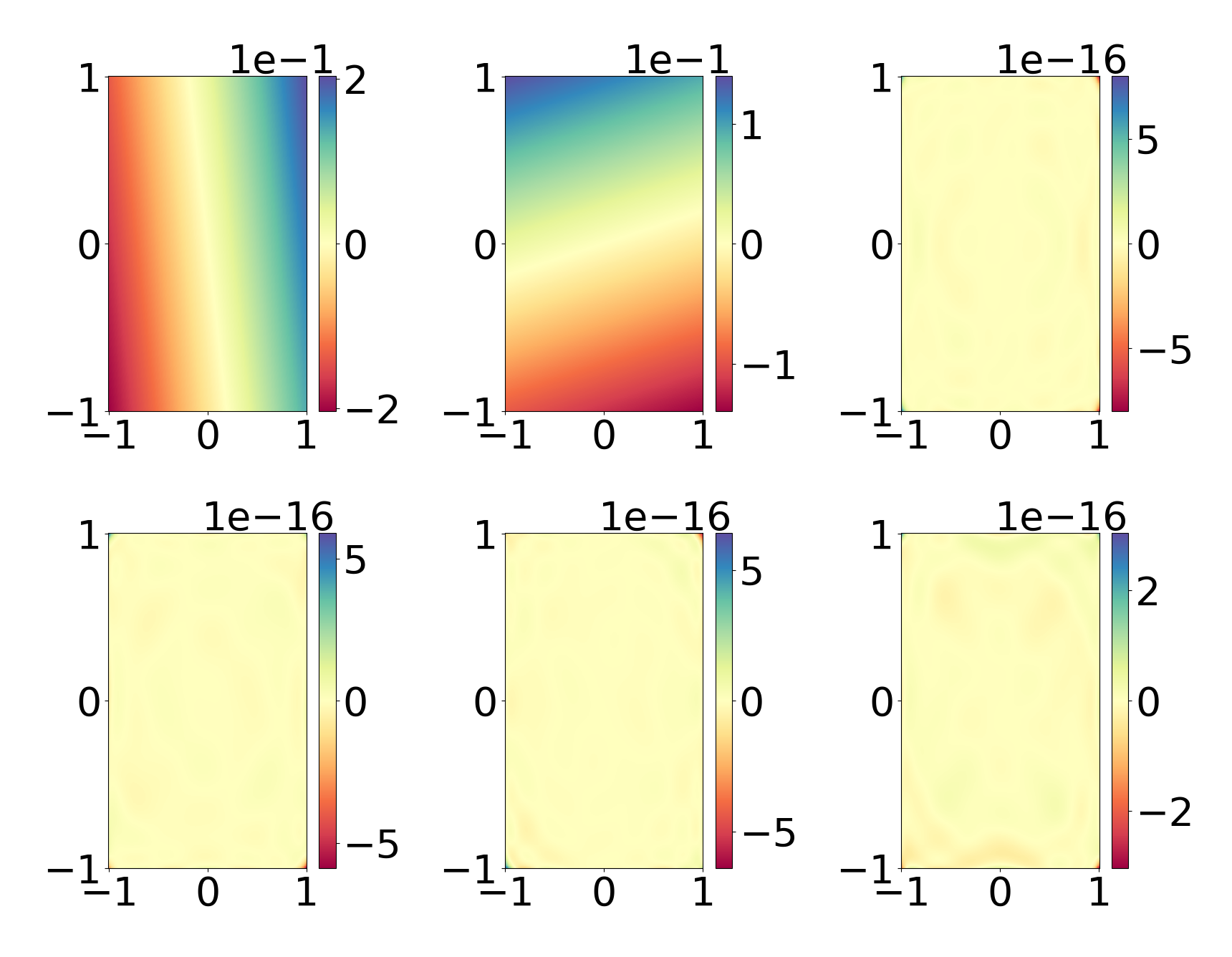}
	\end{minipage} 
		\caption{Left: The squared norm of the first six eigenfunctions $p_i$ of the linear example. Right: The first six eigenfunctions $p_i$.}
		\label{fig:linear_eigenfunctions}
	\end{figure}
	\begin{figure}[h]
		\includegraphics[width = .45 \textwidth]{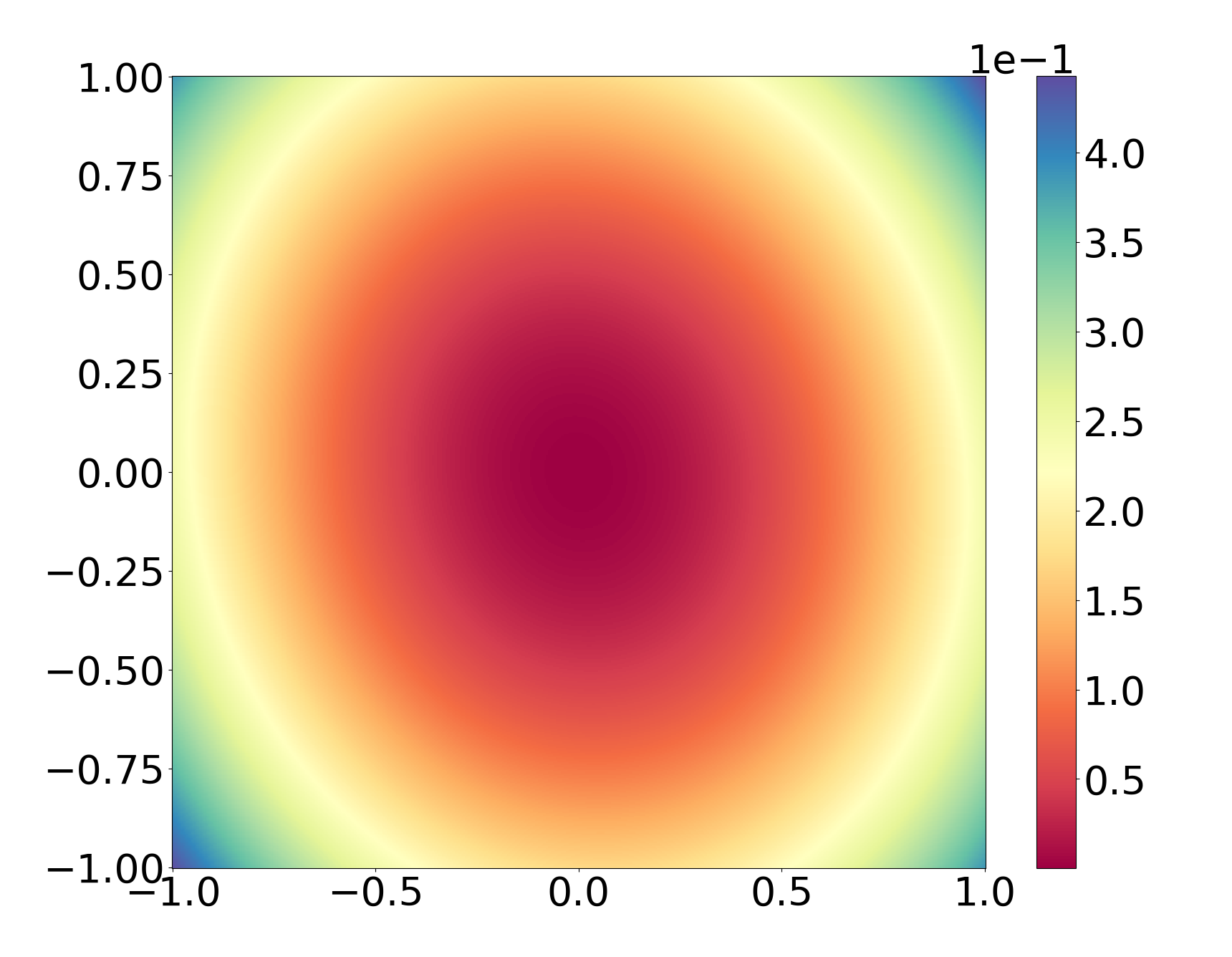}		
		\includegraphics[width = .45 \textwidth]{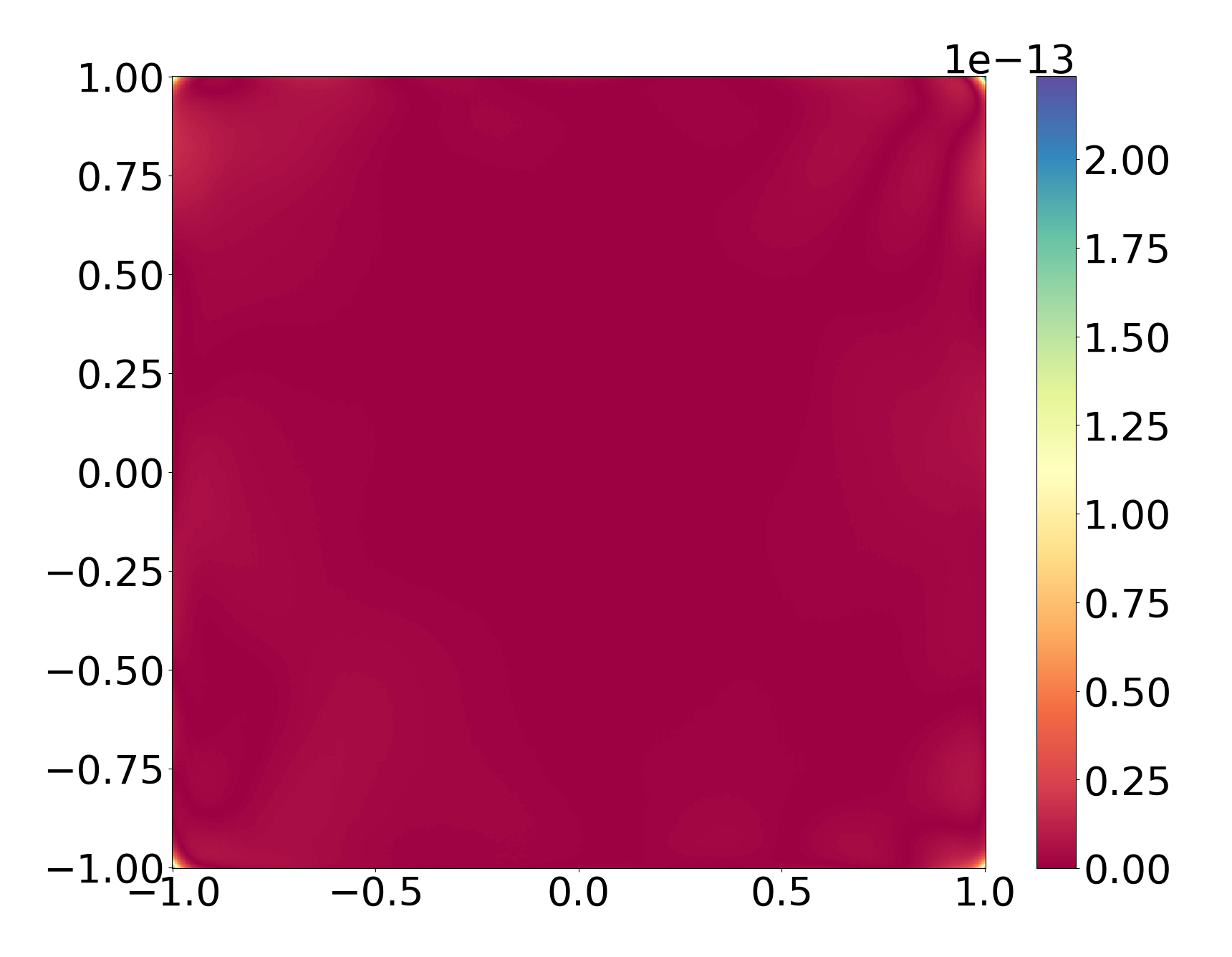}
		\caption{Left: The computed sum of squares solution for the linear example. Right: The error between computed solution and reference solution obtained by solving the Lyapunov matrix equation.} 
		\label{fig:linear_lyapunov_function}
	\end{figure}
	\subsection{Modified Van der Pol Oscillator}
	The Van der Pol oscillator is a common test example for nonlinear dynamics, see, e.g., \cite{AzmKK21}. While it is possible to use an appropriate weighting $w$ with $w\big|_{\mathcal{M}} = \infty$ to handle the undamped case (where $\mathcal{M}$ denotes the stable manifold), we include a friction term to create a dynamic with zero as the only accumulation point and choose $w(x) := \frac{1}{\|x\|}$ as in the linear quadratic case. To satisfy the tangent condition $f(x)^\top \nu(x) \le 0$, we add an additional term $x_1^3$. We consider the domain $\Omega := [-3,3] \times [-3,3]$ and examine the modified, damped Van der Pol oscillator dynamic and a simple quadratic cost
	\begin{align*}
		f(x_1,x_2) := \begin{pmatrix}
			x_2 - \alpha x_1^3\\
			- \mu ( x_1^2 -1) x_2 - x_1 - \eta x_2
		\end{pmatrix}
	\quad \text{and} \quad g(x_1,x_2) := c_1(x)^2 + c_2(x)^2
	\end{align*}
	with $\mu = 2$, friction term $\eta = 2.2$ and $\alpha = 1.5 \times 10^{-1}$. For the observation we again choose $c_i(x) := x_i$.
	\begin{figure}[h]
	\centering
	\begin{minipage}{.45 \textwidth}
		\begin{tikzpicture}
			\begin{semilogyaxis}[    
				xlabel=Index $i$,    
				ylabel=$\| p_i \|^2_{H}$,
				legend pos=north east,
				width=\textwidth,
				]
				
				\addplot[x=x, y=y, mark=o, color = blue, mark options={draw=blue,fill=white,scale =1.} ] table{
					x y 
					1 0.05198775717388165
					2 0.021656555439024214
					3 0.004077479910617511
					4 0.0011344623949942345
					5 0.00033143534508742244
					6 0.000112855594377326
					7 2.814806943370669e-05
					8 1.4327217403970211e-05
					9 8.469107906964432e-06
					10 6.151224330175691e-06
					11 1.767560525068983e-06
					12 9.448866074614081e-07
					13 4.0254683244311614e-07
					14 2.0252915506545813e-07
					15 7.552196051663117e-08
					16 3.2191087955776954e-08
					17 1.0732757247179738e-08
					18 8.282826649802481e-09
					19 5.209460144272769e-09
					20 3.828698195294888e-09
					21 2.674504130738594e-09
					22 9.812242171037708e-10
					23 4.517587406497899e-10
					24 2.0420447015501133e-10
					25 1.4375968825552682e-10
					26 6.91709962490349e-11
					27 5.029809997937986e-11
					28 2.631588426496285e-11
					29 1.1403177078328388e-11
					30 9.46047802996907e-12
					31 5.237587099874618e-12
					32 2.8731395209821117e-12
					33 1.7078052703151237e-12
					34 1.1686552528845988e-12
					35 9.120497203379038e-13
					36 6.72436308431616e-13
					37 2.630797984179827e-13
					38 1.6654582307747675e-13
					39 1.0807219964510622e-13
					40 7.663394538499934e-14
					41 3.741603993310947e-14
					42 2.3661873924759697e-14
					43 1.5643242494478097e-14
					44 8.19066374981227e-15
					45 5.746278642616807e-15
					46 4.022150979942336e-15
					47 2.211604031577383e-15
					48 1.962211782398898e-15
					49 1.1297762608082232e-15
					50 6.721292706843092e-16
					51 4.360956149250667e-16
					52 3.159018959174429e-16
					53 2.0170367159863186e-16
					54 1.4174523998107107e-16
					55 9.97614830234511e-17
					56 9.676426260197884e-17
					57 8.869333598333572e-17
					58 8.242182843765623e-17
					59 7.14698896642597e-17
					60 6.812773454881852e-17
					61 6.673128047332937e-17
					62 6.585754154382771e-17
					63 6.246775857734556e-17
					64 6.157838722709983e-17
					65 5.881221541574059e-17
					66 5.590616496707927e-17
					67 5.512104239118911e-17
					68 5.3396571790524266e-17
					69 5.186584847515288e-17
					70 5.0221478347449806e-17
					71 4.877821396787599e-17
					72 4.80855181821911e-17
					73 4.713585407075262e-17
					74 4.600900452223406e-17 
					75 4.5500667844357873e-17
					76 4.4412082074407364e-17
					77 4.372627653229889e-17
					78 4.306239066647485e-17
					79 4.1945003693416754e-17
					80 4.146410741650374e-17 
				};											
			\end{semilogyaxis}
		\end{tikzpicture}
	\end{minipage}
	\begin{minipage}{.45 \textwidth}
		\includegraphics[width = \textwidth]{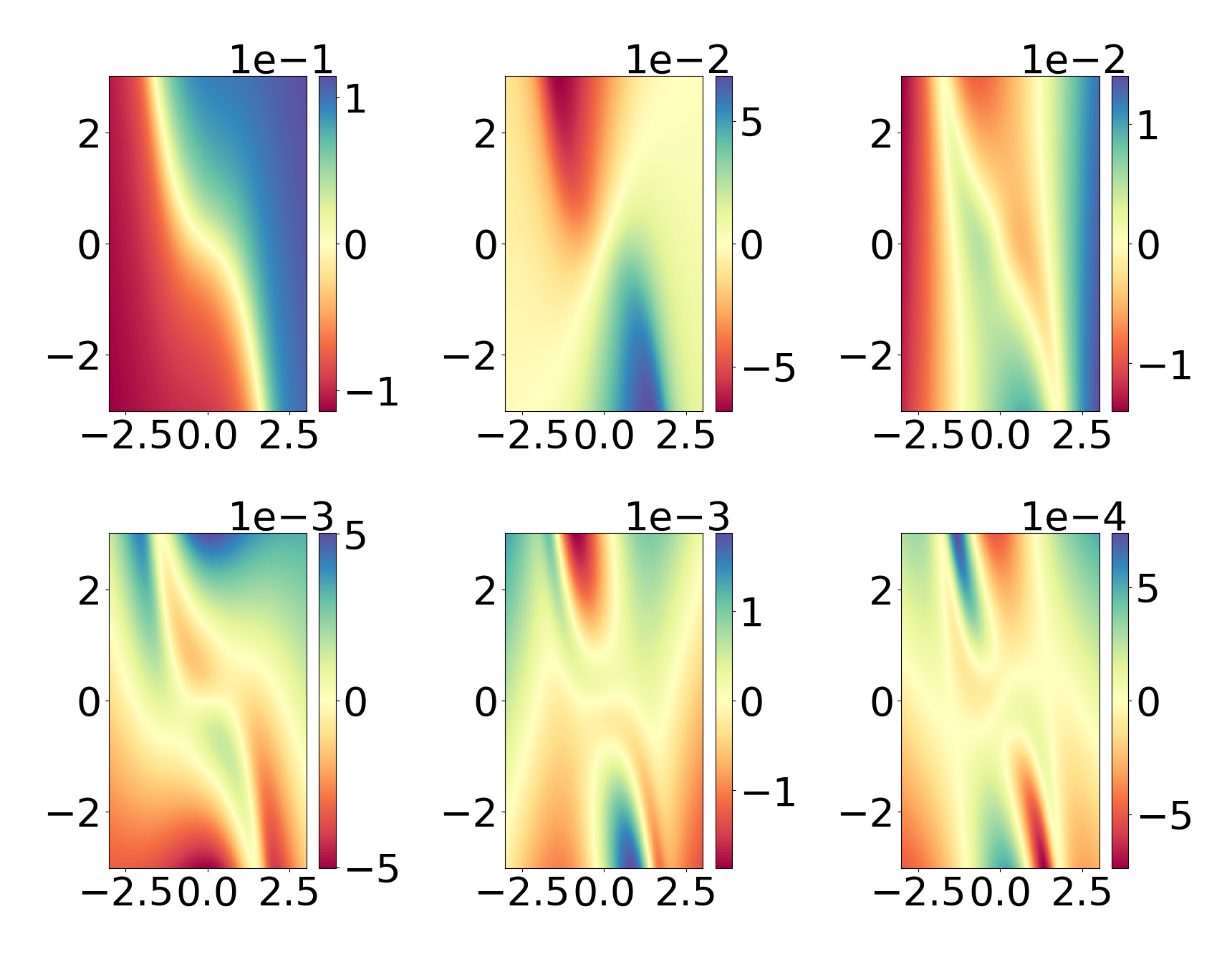}
	\end{minipage}
		\caption{Left: The squared norm of the first eighty eigenfunctions $p_i$ for the nonlinear example. Right: The first six eigenfunctions $p_i$.}
		\label{fig:nonlinear_eigenfunctions}
	\end{figure}
	\begin{figure}[h]
		\includegraphics[width = .45 \textwidth]{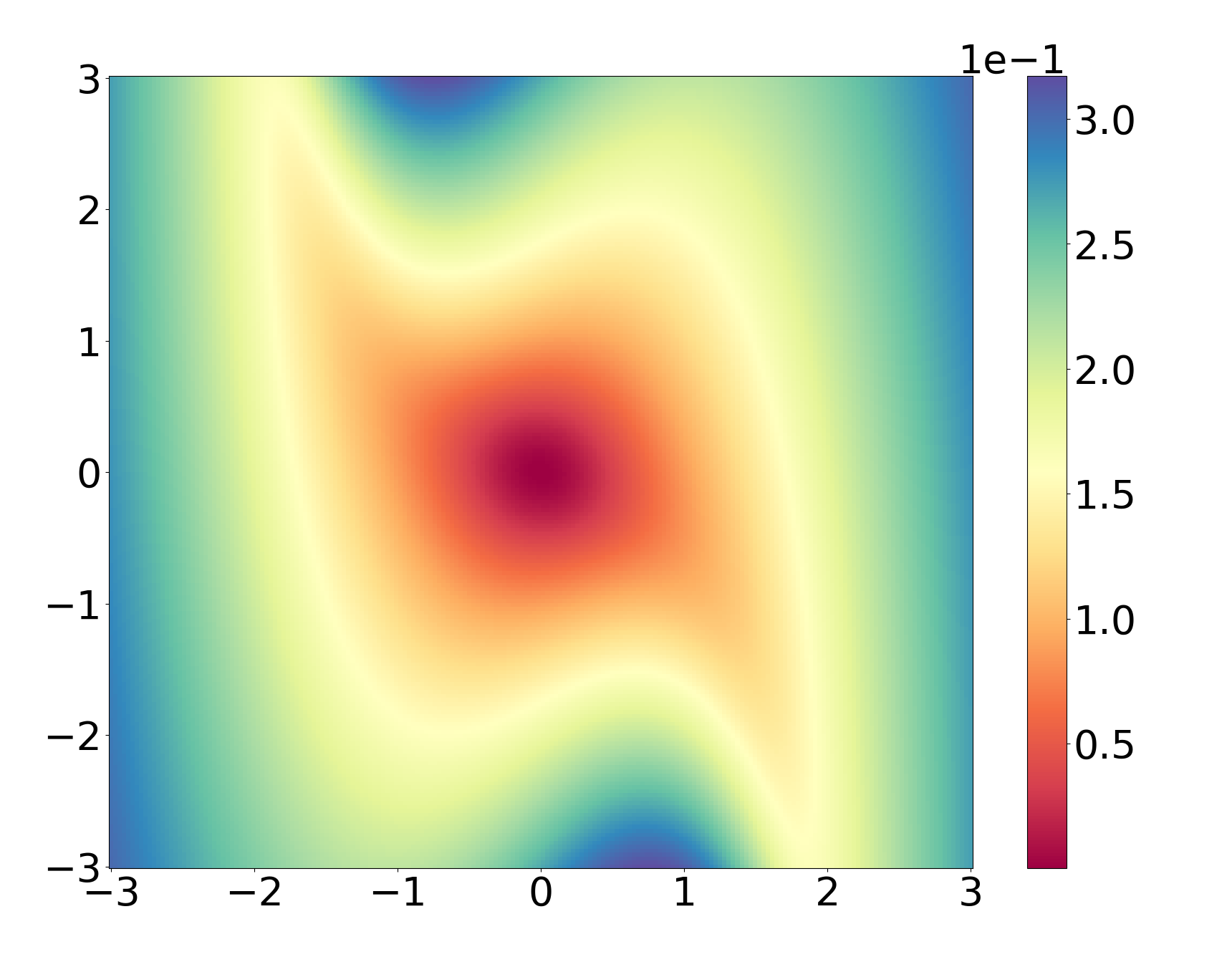}		
		\includegraphics[width = .45 \textwidth]{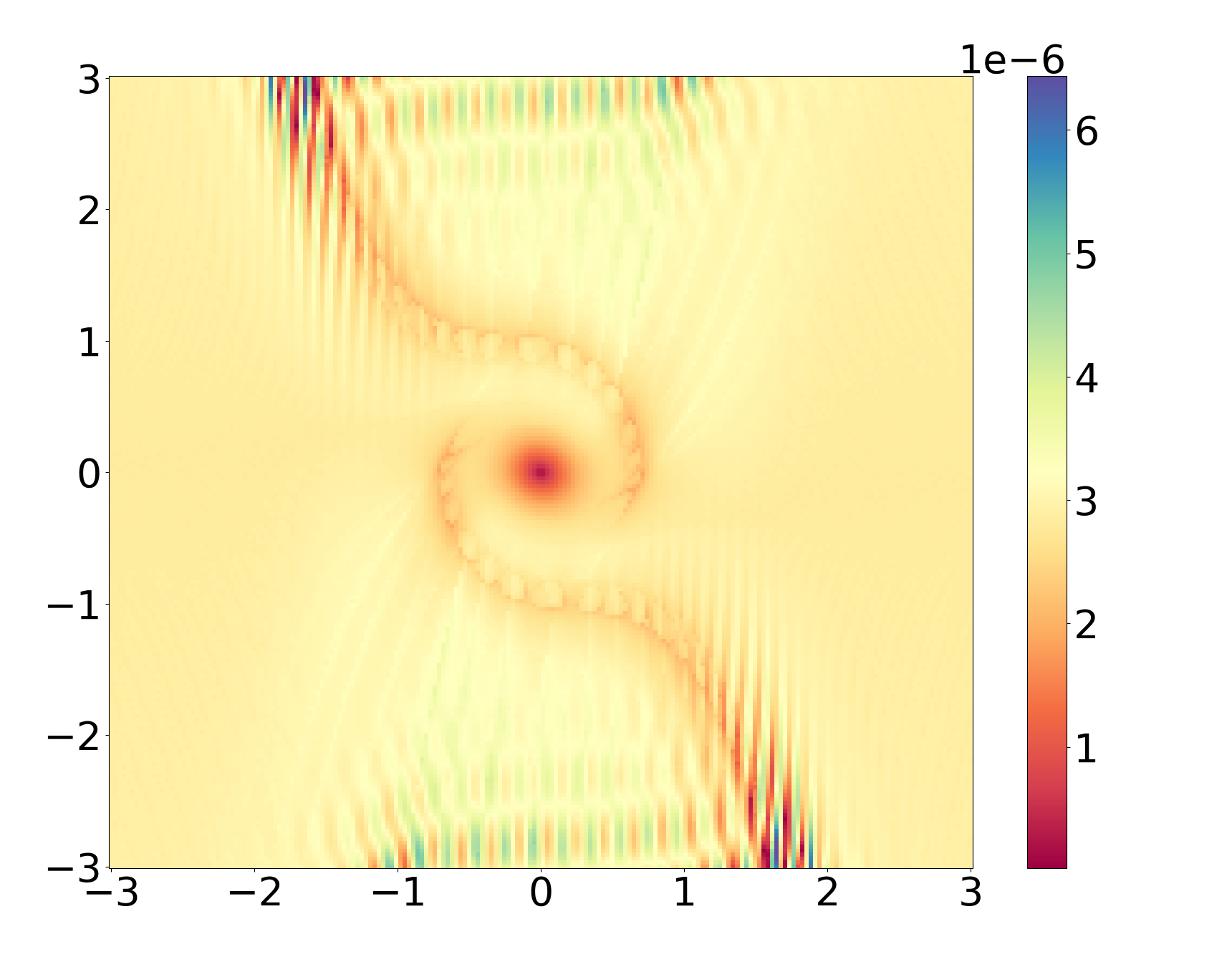}
		\caption{Left: The computed sum of squares solution for the nonlinear example. Right: The error between computed solution and the reference solution obtained by integrating over the trajectories.}
		\label{fig:nonlinear_lyapunov_function}
	\end{figure}
	We need to verify that the assumptions of  \cref{proposition:exponential_decay_stable_in_origin} are satisfied. To do this, we first have to ensure that the linearized problem is locally stable around the origin. We have the decomposition
	\[
	f(x_1,x_2) = A_m\begin{pmatrix} x_1 \\ x_2 \end{pmatrix} + \tilde{f}(x_1,x_2) =\begin{pmatrix}
			0 & 1\\
			-1 & \mu - \eta
	\end{pmatrix} \begin{pmatrix} x_1 \\ x_2 \end{pmatrix} + \tilde{f}(x_1,x_2) 
	\]
	where the eigenvalues of the matrix $A_{m}$ are given by
	\[
		\lambda_i(A_m) := \frac{1}{2} \left( p \pm \sqrt{ p^2 - 4} \right)	\qquad \text{with} \; p := \mu - \eta < 0.
	\]
	We immediately see that $\text{Re}(\lambda_i(A_m)) < 0$ for our choice of parameters and therefore the matrix is stable.
	As a reference solution, we approximate the Lyapunov function by integrating the cost along solution trajectories of the system. We use orthonormalized splines with $60$ nodes and degree $4$ for discretization and Gauss-Legendre quadrature of degree $4$ for integration on each subinterval. The rapid decay predicted in \cref{thm:spectral_decay_cinfty} can be seen in  \cref{fig:nonlinear_eigenfunctions}, along with the highly nonlinear eigenfunctions. The error between the reference solution and our method has a magnitude of around $10^{-5}$, as shown in \cref{fig:nonlinear_lyapunov_function}. We attribute this error at least partially to the way we compute the reference solution.
	
	\section{Conclusion and outlook}
	\label{sec:conclusion}
	
	In this paper, we presented a method for representing a Lyapunov function as the solution to an operator Lyapunov equation. We showed that the solution to this operator equation has a nuclear decomposition with rapidly decaying singular values which allows for a low-rank approximation. We demonstrated the feasibility of this approximation both theoretically and numerically.
	
	Several aspects seem to be worth to be investigated further, one of them being the extension of our concepts to the case of (high-dimensional) nonlinear control problems which are often solved via a sequence of Lyapunov equations in the policy iteration.
	Moreover, we believe our results to be also applicable in the context of model order reduction where the linear structure of the infinite-dimensional system could be used for balanced truncation like techniques. 

\section*{Acknowledgement}

 We thank M.~Oster (TU Berlin) for helpful comments and discussions on an earlier version of this manuscript.

	\bibliography{references} 
	\bibliographystyle{siam}
	
\end{document}